\newtheorem{theorem}{Theorem}
\newtheorem{proposition}{Proposition}
\newtheorem{lemma}{Lemma}
\newtheorem{corollary}{Corollary}
\newcommand{\half}{\ensuremath{\frac{1}{2}}}
\newcommand{\Q}{\mathbb{Q}}
\newcommand{\Z}{\mathbb{Z}}
\newcommand{\R}{\mathbb{R}}
\newcommand{\C}{\mathbb{C}}
\newcommand{\N}{\mathbb{N}}
\newcommand{\real}{\mathop{\rm Re}}
\newcommand{\imag}{\mathop{\rm Im}}
\newcommand{\sgn}{\mathop{\rm sgn}}
\newcommand{\eqdef}{\mathop{=}^{\rm def}}
\newcommand{\eps}{\varepsilon}
\newcommand{\es}[1]{\begin{equation}\begin{split}#1\end{split}\end{equation}}
\newcommand{\est}[1]{\begin{equation*}\begin{split}#1\end{split}\end{equation*}}
\newcommand{\psum}{\mathop{\sum\nolimits^+}}
\newcommand{\msum}{\mathop{\sum\nolimits^-}}
\newcommand{\pmsum}{\mathop{\sum\nolimits^\pm}}
\title{A Twisted Motohashi Formula and Weyl-Subconvexity for $L$-functions of Weight Two Cusp Forms}
\author{Ian Petrow}
\thanks{The author is partially supported by Swiss National Science Foundation grant 200021\_137488}
\email{ian.petrow@epfl.ch}
\address{\'Ecole Polytechnique F\'ed\'erale de Lausanne \\ Section de Math\'ematiques \\ 1015 Lausanne, Switzerland}
\begin{document}
\maketitle

\vspace{-24pt}

\begin{abstract}
We derive a Motohashi-type formula for the cubic moment of central values of $L$-functions of level $q$ cusp forms twisted by quadratic characters of conductor $q$, previously studied by Conrey and Iwaniec and Young.  Corollaries of this formula include Weyl-subconvex bounds for $L$-functions of weight two cusp forms twisted by quadratic characters, and estimates towards the Ramanujan-Petersson conjecture for Fourier coefficients of weight 3/2 cusp forms. 
\end{abstract}

\section{Introduction}\label{intro}

Let $f$ be a classical holomorphic cusp form of even positive weight $\kappa$, odd square-free level $q$ and trivial central character.  Let $\chi$ the unique primitive Dirichlet character of conductor $q$ corresponding to a quadratic field extension $K/\Q$.  Consider the value of the $L$-function of $f\otimes \chi$ at its center point of symmetry: $L(1/2,f\otimes \chi)$.  The Waldspurger formula \cite{Waldspurger} gives an arithmetic interpretation to this positive real number.  

In this paper we study the below cubic moment of central values of $L$-functions, first considered by Conrey and Iwaniec in \cite{ConreyIwaniec}.  For $\kappa \geq 12$ and any $\eps>0$ they prove that \es{\label{CIresult}\sum_{f \in \mathscr{F}_\kappa(q)}L(1/2,f\otimes \chi)^3 \ll_{\kappa,\eps} q^{1+\eps}.} The Lindel\"of-on-average estimate \eqref{CIresult} stands out among the vast literature on moments of $L$-functions because it goes far beyond what one expects to be provable using the current technology.    
Conrey and Iwaniec obtain as corollaries of their estimate Weyl-subconvex bounds for several important families of $L$-functions, and the best currently-known bound on Fourier coefficients of $1/2$-integral weight modular forms.  

Here we revisit the cubic moment of Conrey and Iwaniec and derive a corresponding dual moment in Theorem \ref{Thm2} which is reminiscent of some formulas first derived by Motohashi.  These Motohashi formulas relate the spectral cubic moment of (un-twisted) ${\rm GL}_2$ $L$-functions to an average of four Riemann zeta functions in $t$-aspect.  See Motohashi chapter four \cite{Motohashi}, or also Michel and Venkatesh sections 4.5.4 and 4.5.5 of \cite{MichelVenkateshGL2}.  In Theorem \ref{Thm2} we give a twisted Motohashi-type formula for the dual sums of \eqref{CIresult}.  This formula does not seem to follow in a straightforward way from the general arguments of Michel and Venkatesh.  

The Motohashi-type formula of Theorem \ref{Thm2} is crucial in extending \eqref{CIresult} to small weights $\kappa\geq 2$.  We give our improved estimate for the cubic moment in Theorem \ref{MP} and Weyl-subconvex bounds for the central values of $L$-functions in Corollary \ref{cor1}.  Previously, the best available estimates for small weights $\kappa\geq 2$ were apparently special cases of the results of Blomer and Harcos \cite{BlomerHarcosBurgessSC,BlomerHarcosBurgessSCad}, which are quite general and of Burgess quality in $q$.  The present paper is in some sense a counterpart to the work of Young  \cite{YoungCubic} who gives estimates for the same cubic moment which are uniform as $\kappa\rightarrow \infty$.  

Additionally, the Motohashi-type formula allows us to replace the epsilons appearing in the previous results \cite{ConreyIwaniec,YoungCubic} with explicit powers of $\log q$ and the divisor function of $q$.  This gives the best currently-know estimates.  As in the original work of Conrey and Iwaniec we focus on the case of holomorphic forms, but our results carry over to the case of non-holomorphic Maass waveforms and Eisenstein series as well.  

Now we describe our results more precisely.  Let $\lambda_f(n)$ denote the Hecke eigenvalues of $f$ normalized so that $|\lambda_f(n)| \leq d(n)$ and let $\mathscr{F}_\kappa(q)$ be an orthonormal basis (with respect to the Petersson inner product) of Hecke eigenforms .  Define for $\real(s)>1$ the $L$-series \est{ L(s,f \otimes \chi) \eqdef \sum_{n=1}^\infty \frac{\lambda_f(n) \chi(n)}{n^s},} the local $L$-function at the infinite place, and the completed $L$-function \est{L_\infty(s,f \otimes \chi) \eqdef  \left(\frac{q}{2\pi}\right)^{s-1/2}\Gamma\left(s + \frac{\kappa-1}{2}\right) \\ \Lambda(s,f\otimes \chi)\eqdef L_\infty(s,f\otimes \chi)L(s,f\otimes \chi).} 
We assume that $i^\kappa = \chi(-1)$ so that the sign of the functional equation is fixed to $+1$: \est{\Lambda(s,f\otimes \chi) = \Lambda(1-s,f \otimes \chi).} 
Let the Fourier coefficients of $f$ be $a_f(n)=a_f(1)\lambda_f(n)$, and let \est{\omega_f = (4\pi)^{1-\kappa} \Gamma(\kappa-1)|a_f(1)|^2} be the standard harmonic weights necessary for the clean application of the Petersson trace formula.  These weights do not vary much, in fact in the case of holomorphic forms we have \es{\label{weight size}\frac{1}{\kappa (q+1) (\log \kappa q)^3} \ll \omega_f \ll \frac{\log \kappa q +1}{\kappa (q+1)},} by \cite{DeWC, CarlettiMontiPerelli, HLAppendix}.  The weight $\kappa$ is always considered fixed, and all implicit constants may depend on $\kappa$. In this paper we prove the following refinement of \eqref{CIresult}.

\begin{theorem}\label{MP}
Suppose $\kappa \geq 2$ and $q$ odd square-free with all $\real(\alpha_i) \ll 1/\log q$.  Let $\nu(q)$ denote the number of prime factors of $q$.  There exists an absolute constant $C>0$ for which \est{\sum_{f\in \mathscr{F}_\kappa(q) }\omega_f \Lambda(\half+\alpha_1,f\otimes \chi) \Lambda(1/2+\alpha_2,f\otimes \chi)\Lambda(1/2+\alpha_3,f\otimes \chi) \\ \ll C^{\nu(q)} (\log (\nu(q)+1))^{2+4/C}  \begin{cases} (\log q)^{4}  & \text{Case A} \\ (\log q)^{3} |\zeta(1+2i|\alpha_i|)| & \text{Case B} \\ (\log q)^{2}|\zeta(1+i|\alpha_i|+i|\alpha_j|)||\zeta(1+i|\alpha_i|-i|\alpha_j|)| & \text{Case C}. \end{cases} } where \est{\begin{cases}\text{Case A:} & \text{all } |\alpha_i| \leq 1/\log q \\ \text{Case B:} & \text{for all } i,j,\,\, ||\alpha_i|-|\alpha_j|| \leq 1/\log q \text{ and } |\alpha_i|>1/\log q \\ \text{Case C:} & \text{there exists } i \neq j \text{ such that } ||\alpha_i|-|\alpha_j||>1/\log q. \end{cases}} If $\kappa=2$ then the powers of $\log q$ above are increased to $5,4,3$ in cases A,B,C, respectively. \end{theorem}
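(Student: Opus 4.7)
The plan is to deduce Theorem \ref{MP} from the twisted Motohashi formula of Theorem \ref{Thm2} by carefully estimating the resulting dual expression. The advantage of having the Motohashi formula is twofold: it replaces the direct manipulation of Bessel/Kloosterman averages, which in \cite{ConreyIwaniec} forced $\kappa\geq 12$, by the analytic behavior of the Riemann zeta function on vertical lines near $\real(s)=1$, where sharp logarithmic bounds are available; and because everything is expressed in zeta-space, it naturally produces explicit powers of $\log q$ rather than $q^\eps$.

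After applying Theorem \ref{Thm2}, I would rewrite the harmonic cubic moment as a Mellin--Barnes integral whose integrand is a product of (essentially) four Riemann zeta factors at arguments linearly shifted by the $\alpha_i$, multiplied by local factors at primes dividing $q$ and an archimedean weight coming from the Gamma factors in $L_\infty$. The first step is to push the contour to a line $\real(s) = 1 + 1/\log q$, picking up residues at the poles of the integrand, and then bound the integral on the shifted line using the standard convexity estimate $\zeta(1+1/\log q + it) \ll \log(|t|+2)$ integrated against the rapidly decaying archimedean weight. The three-case structure in the statement emerges from the positions of the poles in the $\alpha_i$-plane relative to the chosen contour. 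In Case~A the poles cluster within $O(1/\log q)$ of $s=1$, so each of up to four residues contributes a factor of $\log q$. In Cases~B and C the poles are partially or fully separated, and the corresponding residues become $\zeta(1 + 2i|\alpha_i|)$ or $\zeta(1 + i|\alpha_i|\pm i|\alpha_j|)$, which are finite but can be large. The factor $C^{\nu(q)}(\log(\nu(q)+1))^{2+4/C}$ results from bounding the arithmetic local factors at primes dividing $q$ through a standard Rankin--Selberg-type Euler product argument, where the logarithmic gain arises from differentiating the Euler factors with respect to the shifts; here a uniform estimate across all $\nu(q)$ primes is needed to avoid an implicit $q^\eps$ loss.

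The principal obstacle is the endpoint weight $\kappa=2$, where the archimedean weight built from $\Gamma(s + 1/2)$ exhibits only mild decay in $|\imag(s)|$. At this weight the dual Mellin--Barnes integral converges more slowly and contour shifts require extra care, accounting for the loss of one additional power of $\log q$ in each of the three cases (producing the $(\log q)^5$, $(\log q)^4$, $(\log q)^3$ bounds claimed). I expect to handle this case by inserting a smooth dyadic cutoff in the archimedean variable, treating the bounded range by the Stirling decay of $\Gamma$, and bounding the tail by exploiting cancellation from the shifted $\zeta$ factors themselves. This weight-two tail estimate is likely to be the most technically delicate step of the argument.
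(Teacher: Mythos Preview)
Your proposal misidentifies the structure of the dual moment in Theorem~\ref{Thm2}, and this leads to an approach that cannot produce the claimed bounds.

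The dual moment $DM(\mathscr{F},\alpha)$ is \emph{not} an integral of four Riemann zeta functions. It is an average over Dirichlet characters $\psi\pmod q$ of a product $L(s,\psi)L(s+u_1+u_2,\psi)L(s+u_2+u_3,\psi)L(s+u_3+u_1,\psi)$, weighted by $g(\chi,\psi)\overline{\eps(\psi)}^2/q$. Shifting the $s$-contour to $\real(s)=1+1/\log q$ and using $\zeta(1+1/\log q+it)\ll\log(|t|+2)$ is therefore not the relevant maneuver: for nonprincipal $\psi$ there are no poles to pick up, and the principal-character term is already negligible. The three-case structure in the statement does not arise from residues of $\zeta$; it arises from the large sieve estimate for the \emph{fourth} moment of Dirichlet $L$-functions on the critical line (Lemma~\ref{sieve} in the paper), where after Cauchy--Schwarz one encounters $\sum_\psi |L(1/2+it_1,\psi)L(1/2+it_2,\psi)|^2$ and the answer depends on $|t_1-t_2|$ through a factor $|\zeta(1+i(t_1-t_2))|^2$. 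Choosing which pairing to use in Cauchy--Schwarz according to the configuration of the $|\alpha_i|$ is precisely what produces Cases A, B, C.

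Likewise, the factor $C^{\nu(q)}$ is not a Rankin--Selberg artifact: it comes from the Deligne-type bound $|g(\chi,\psi)|\leq C^{\nu(q)}q$ on the character sum weighting the dual moment. The $(\log(\nu(q)+1))$ powers come from summing over divisors of $q$ via Mertens' theorem, not from differentiating Euler factors. Finally, for $\kappa=2$ the issue is that the archimedean weight $U^\pm$ decays only like $(1+|s|)^{-1}$ on $\real(s)=1/2$, so the $s$-integral diverges there; the fix is to move to $\real(s)=1/2-1/\log q$, where $\int(1+|t|)^{-1-c/\log q}\,dt\ll\log q$ gives exactly the extra logarithm. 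Your dyadic-cutoff plan does not address this, because the problem is integrability in $s$, not decay of a single Gamma factor.
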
 

The most interesting case of Theorem \ref{MP} is of course $\alpha_1=\alpha_2=\alpha_3=0$.  The local $L$-function at the archimedian place $L_\infty$ is constant across the family, and so Theorem \ref{MP} and \eqref{weight size} immediately give a new estimate for \eqref{CIresult} without epsilons and which is valid for all $\kappa \geq 2$. 

The main new idea which leads us to Theorem \ref{MP} is a ``Motohashi-type formula'' for the dual sums produced by applying trace formulas.  The moment under study in the above proposition breaks up into the expected main term, the dual moment described in the next proposition, and a small error term.  In Theorem \ref{Thm2} we only write the dual moment for prime levels $q$ for ease of exposition.  

\begin{theorem}[Motohashi-type Formula]\label{Thm2}  Let $\kappa \geq 2$, $q$ prime, and $\alpha_1,\alpha_2,\alpha_3$ be three complex numbers with $\real(\alpha_i)< 1/\log q$.  The group $(\Z/2\Z)^3$ acts on the set of triples $\alpha=(\alpha_1,\alpha_2,\alpha_3)$ by multiplication by $\pm1$ on each entry.  Let $\psum$ and $\msum$ denote sums over the primitive even, resp. odd, Dirichlet characters of conductor $q$, $\eps(\psi)$ be the sign of the functional equation of $L(s,\psi)$, and $\varphi(q)$ be the number of primitive Dirichlet characters of conductor $q$.  

For $\chi$ the quadratic character of prime conductor $q$ let $g(\chi,\psi)$ be the character sum \est{g(\chi,\psi) = \sum_{u,v \pmod q} \chi(uv(u+1)(v+1))\psi(uv-1)} which satisfies $|g(\chi,\psi)/q|\leq C$ for an absolute constant $C$.  We have that \est{\sum_{f\in \mathscr{F}_\kappa(q) }\omega_f \Lambda(\half+\alpha_1,f\otimes \chi) \Lambda(1/2+\alpha_2,f\otimes \chi)\Lambda(1/2+\alpha_3,f\otimes \chi) \\  = \sum_{\sigma \in (\Z/2\Z)^3} MT(\mathscr{F},\sigma\alpha) + DM(\mathscr{F},\sigma\alpha) + O(q^{-1/3+\eps}),} where \est{MT(\mathscr{F},\alpha) \eqdef & \zeta_q(1+\alpha_1+\alpha_2)\zeta_q(1+\alpha_2+\alpha_3)\zeta_q(1+\alpha_3+\alpha_1)\\ & \times L_\infty(1/2+\alpha_1,f \otimes \chi)L_\infty(1/2+\alpha_2,f \otimes \chi)L_\infty(1/2+\alpha_3,f \otimes \chi)} and \est{DM(\mathscr{F},\alpha) \eqdef \sum_{\pm}\frac{1}{(2\pi i )^4} \int_{(1/2)} \iiint_{(\eps)} \frac{U^\pm(s,u_1,u_2,u_3)q^{2s+u_1+u_2+u_3-1}}{(u_1-\alpha_1)(u_2-\alpha_2)(u_3-\alpha_3)} \\ \times \frac{1}{\varphi(q)}\pmsum_{\psi \pmod q} \frac{g(\chi,\psi)}{q} \overline{\eps(\psi)}^2 L(s,\psi)L(s+u_1+u_2,\psi)L(s+u_2+u_3,\psi)L(s+u_3+u_1,\psi)\\ \,du_1\,du_2\,du_3\,ds.}  The two functions $U^\pm(s,u_1,u_2,u_3)$ are each holomorphic in the region \est{\max(\real(u_i))-1/2 < \real(s+u_1+u_2+u_3) < \kappa/2 \\ -\kappa/2 < \real(u_i),} symmetric in the $u_i$ variables, and satisfy the bounds \est{U^\pm \ll_\eps (1+|s|)^{\ell} \exp(-(\pi/2-\eps)|\imag(u_1+u_2+u_3)|)} for any $\eps>0$, where \est{\ell = \max\left(\real(s+u_1+u_2+u_3) -(\kappa+1)/2,-3/2\right).} 
\end{theorem}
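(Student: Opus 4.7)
The plan is to open the three $\Lambda$-factors on the left into Mellin-Barnes integrals of Dirichlet series, apply the Petersson trace formula to the resulting triple Hecke sum, and then transform the Kloosterman-sum output via Poisson summation into the character-sum expression $DM$. Concretely, for each $i$ I would write
\est{\Lambda\left(\half+\alpha_i, f\otimes\chi\right) = \frac{1}{2\pi i}\int_{(\eps)} L_\infty\!\left(\half+\alpha_i+u_i,f\otimes\chi\right) \sum_{n_i\geq 1} \frac{\lambda_f(n_i)\chi(n_i)}{n_i^{1/2+\alpha_i+u_i}} \frac{G(u_i)}{u_i-\alpha_i}\,du_i + (\text{F.E. dual}),}
with $G$ an entire test function of rapid decay; combining the three integrals with the dual terms from the functional equations produces the two $\pm$ pieces and accounts for the factor $q^{2s+u_1+u_2+u_3-1}$ after introducing an auxiliary Mellin variable $s$ to uncouple the three Dirichlet series from the Petersson kernel.

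Next I would insert the Hecke multiplicativity to reduce $\lambda_f(n_1)\lambda_f(n_2)\lambda_f(n_3)$ to a double Hecke sum and apply Petersson. The $\delta$-term, combined with the triple Dirichlet series, factors as an Euler product in which the local factors at $p\neq q$ collapse to $\zeta_q(1+\alpha_i+\alpha_j)$ after contour shifts picking up the $u_i=\alpha_i$ poles; the archimedean integrals evaluate to the $L_\infty$ product. Summing over the $(\Z/2\Z)^3$-orbit accounts for the eight choices of sign coming from the functional equations, yielding exactly the stated $MT(\mathscr{F},\sigma\alpha)$.

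The substantive step is the off-diagonal. After Petersson I would have a Kloosterman-sum sum over $c\equiv 0\pmod q$ weighted by a $J_{\kappa-1}$-Bessel integral against the three Dirichlet series twisted by $\chi$. Opening the Kloosterman sum and applying Poisson summation to each of the three $n_i$-sums modulo $c$ introduces three Gauss-sum factors for $\chi$ and produces a quartic character sum over $c$; writing $c=qc'$, completing the $c'$-sum, and using orthogonality to detect the dual residue classes via characters $\psi\pmod q$ converts the character sum into a sum over $\psi$, while the three dual variables produce the three shifted factors $L(s+u_i+u_j,\psi)$ and the original $c$-sum the factor $L(s,\psi)$. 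A substitution of the form $u=m_1\bar m_3$, $v=m_2\bar m_3$, parallel to the Conrey-Iwaniec manipulation, identifies the remaining quartic character sum with $g(\chi,\psi)/q$ up to the Gauss-sum ratio $\overline{\eps(\psi)}^2$, which arises from the two functional equations used to symmetrize the $L(s,\psi)$ product. The parity splitting $\psum/\msum$ comes from the archimedean sign $i^{\kappa}\chi(-1)=1$ together with the sign of $\psi(-1)$ in the Poisson dual.

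The hard part will be the construction and analysis of $U^\pm(s,u_1,u_2,u_3)$. The Bessel integral
\est{\int_0^\infty J_{\kappa-1}(4\pi x) x^{-s-u_1-u_2-u_3}\frac{dx}{x}}
has a standard Mellin evaluation as a ratio of gamma functions with poles governing the admissible region $\max\real(u_i)-1/2<\real(s+u_1+u_2+u_3)<\kappa/2$, and pairing this with the three $L_\infty(1/2+\alpha_i+u_i,f\otimes\chi)$ factors from Step~1 and with Stirling yields the stated polynomial bound in $|s|$ with exponent $\ell$ and the exponential decay $\exp(-(\pi/2-\eps)|\imag(u_1+u_2+u_3)|)$ from the Barnes integrand. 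The $\pm$ split reflects the two asymptotic phases of $J_{\kappa-1}$ (equivalently, the two choices of sign in the Hankel-type decomposition), which is also what distinguishes the two signs on the dual side. Symmetry of $U^\pm$ in $u_1,u_2,u_3$ is manifest from the symmetry of the starting triple product. The delicate book-keeping across the Gauss-sum collapses, the Mellin variable shifts, and the contour constraints on $s$, $u_i$, together with checking that the error term $O(q^{-1/3+\eps})$ absorbs the contribution of small $c$ and of the boundary poles crossed during the final re-contouring, constitutes the main technical burden of the proof.
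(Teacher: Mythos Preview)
Your outline tracks the paper's broad architecture (approximate functional equation, Petersson, Poisson in the three $n_i$, expansion of the resulting character sum in multiplicative characters $\psi$), but it has a genuine gap at the analytic heart of the argument. After Poisson the dual weight is a three-variable oscillatory integral
\est{\check W_\alpha(m_1,m_2,m_3,c)=\iiint_{\R_{>0}^3} J(2\sqrt{cx_1x_2x_3})\,V\!\left(\tfrac{cx_1}{q},\tfrac{cx_2}{q},\tfrac{cx_3}{q}\right)e(-m_1x_1-m_2x_2-m_3x_3)\,dx,}
not a one-dimensional Bessel Mellin integral. When the $m_i$ all share a sign the phase has a nondegenerate critical point at $x_0=(m_2m_3/c,m_1m_3/c,m_1m_2/c)$, and the leading stationary-phase contribution carries an extra oscillatory factor $e(-m_1m_2m_3/c)$. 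This phase is not produced by your proposed ``standard Mellin evaluation'' of $\int J_{\kappa-1}(4\pi x)x^{-s-u_1-u_2-u_3}\,dx/x$; it is exactly what is needed to cancel the factor $e_c(m_1m_2m_3)$ that appears in the Conrey--Iwaniec evaluation of the arithmetic sum $G(m_1,m_2,m_3,c)$. Without that cancellation the $m_i$-sums do not organize into the four Dirichlet $L$-functions in $DM$. The paper spends an entire section on this: localizing to a box around $x_0$ (Lemmas on $R_1,R_2$), a Morse-lemma change of variables to put the phase in the form $-v_1^2+v_2v_3$, a finite Taylor expansion, and then a second one-variable stationary phase in the $\theta$-parameter of the integral representation of $J_{\kappa-1}$. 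The functions $U^\pm$ and their stated region of holomorphy and decay are the output of this analysis, not of a single gamma-ratio; your description of $U^\pm$ as coming from the Mellin transform of $J_{\kappa-1}$ times the archimedean factors would not give the correct exponent $\ell$ nor the correct strip.

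A second, smaller gap: the diagonal $\delta$-term alone does \emph{not} give $\sum_\sigma MT(\mathscr{F},\sigma\alpha)$. The contour shifts in the diagonal leave behind residual terms of the shape $N(\alpha_i,\alpha_j,\alpha_k)$ and $L(0,0,0)/(-\alpha_1)(-\alpha_2)(-\alpha_3)$; these are cancelled only by the contributions of the Poisson-dual terms with $m_1m_2m_3=0$ (the axes and coordinate planes in $\Z^3$), which you do not mention. You need to compute those degenerate terms separately and match them against the diagonal residues before the clean $MT$ emerges.
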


Moving the $u_i$ contours to the lines $\real(u_i) = 1/\log q$ and applying the multiplicative large sieve (see section \ref{Lsieve}) one derives Theorem \ref{MP} from Theorem \ref{Thm2}. 

As predicted by the generalized Riemann hypothesis, the values of these $L$-functions at $s=1/2$ are in fact known to satisfy \es{\label{walds}L(1/2,f\otimes \chi) \geq 0} due to the well-known result of Waldspurger \cite{Waldspurger}.  See also the classical work-out as an explicit formula for full level due to Kohnen and Zagier \cite{KZ}, and for general level due to Kohnen \cite{KohnenGeneralLevel}.  As a consequence of positivity \eqref{walds}, the bounds on harmonic weights \eqref{weight size}, and our main result Theorem \ref{MP} we derive the following strengthened form of the subconvex bound found in Conrey and Iwaniec as their Corollary 1.2.

\begin{corollary}\label{cor1}
Let $f$ be a primitive holomorphic cusp form of weight $\kappa \geq 2$ with level dividing $q$, and let $\chi \pmod q$ be the quadratic character of conductor $q$. Then \est{L(1/2,f \otimes \chi) \ll_\kappa q^{1/3} (\log q)^{7/3} C^{\nu(q)/3}(\log (\nu(q)+1))^{(2+4/C)/3}.}  If $\kappa = 2$ the  power of $\log q$ above is increased to $8/3$.
\end{corollary}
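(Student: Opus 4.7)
The strategy is simple: derive Corollary \ref{cor1} directly from Theorem \ref{MP} by positivity together with the lower bound on the harmonic weights \eqref{weight size}, specialized to the central point $\alpha_1 = \alpha_2 = \alpha_3 = 0$. There is no deep obstacle; all the arithmetic difficulty has already been packaged into Theorem \ref{MP}.

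First I would specialize Theorem \ref{MP} to $\alpha_1 = \alpha_2 = \alpha_3 = 0$, which places us in Case A and gives
\est{\sum_{g \in \mathscr{F}_\kappa(q)} \omega_g \, \Lambda(\half, g \otimes \chi)^3 \ll_\kappa C^{\nu(q)} (\log(\nu(q)+1))^{2+4/C} (\log q)^4,}
with $(\log q)^5$ in place of $(\log q)^4$ when $\kappa = 2$. At the central point the archimedian local factor simplifies to $L_\infty(\half, g \otimes \chi) = \Gamma(\kappa/2)$, a constant depending only on $\kappa$, so I may replace each $\Lambda(\half, g \otimes \chi)^3$ by $L(\half, g \otimes \chi)^3$ and absorb the factor $\Gamma(\kappa/2)^3$ into the $\ll_\kappa$.

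Next I would invoke Waldspurger's theorem \eqref{walds}: every $L(\half, g \otimes \chi) \geq 0$ and every $\omega_g > 0$, so each summand in the moment is nonnegative. Choose the orthonormal basis $\mathscr{F}_\kappa(q)$ to contain a vector proportional to $f$ (if $f$ is a newform of level a proper divisor of $q$ the $L$-function $L(s, f \otimes \chi)$ is unchanged since it is an invariant of the newform class; and if $i^\kappa \neq \chi(-1)$ the functional equation forces $L(\half, f \otimes \chi) = 0$ and the claim is vacuous). Dropping all but the $g = f$ term yields
\est{\omega_f \, L(\half, f \otimes \chi)^3 \ll_\kappa C^{\nu(q)} (\log(\nu(q)+1))^{2+4/C} (\log q)^4.}

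Finally, the lower bound $\omega_f^{-1} \ll_\kappa q (\log q)^3$ from \eqref{weight size} upgrades this to $L(\half, f \otimes \chi)^3 \ll_\kappa q (\log q)^7 C^{\nu(q)} (\log(\nu(q)+1))^{2+4/C}$, and extracting cube roots produces the stated bound with $(\log q)^{7/3}$. For $\kappa = 2$ the combined exponent becomes $(\log q)^{5+3} = (\log q)^8$, whose cube root $(\log q)^{8/3}$ matches the claim. The only genuine work is Theorem \ref{MP} itself; the corollary is essentially a one-line consequence of it, with positivity doing all the amplification.
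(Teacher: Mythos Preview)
Your proposal is correct and follows exactly the approach the paper indicates in the introduction: specialize Theorem \ref{MP} to $\alpha_i=0$, invoke the nonnegativity \eqref{walds} to drop all but one term, and use the lower bound on $\omega_f$ from \eqref{weight size}. The paper does not spell out a proof of Corollary \ref{cor1} beyond this outline, so your write-up is a faithful expansion of it.
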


Consider the case that $f$ is of level $q$ and weight $\kappa\geq 2$.  Then $f$ corresponds under the Shimura lift to a half-integral cusp form $F$ of weight $(\kappa+1)/2$ and level $4q$.  We write the Fourier expansion as \est{F(z) = \sum_{n=1}^\infty c_F(n) n^{(k-1)/4}e(nz),} so that the Ramanujan-Petersson conjecture gives that the Fourier coefficients are $c_F(n) \ll_\eps n^\eps$, for $n$ odd square-free and $(q,n)=1$.  Via e.g. Corollary 1 of Kohnen \cite{KohnenGeneralLevel} we derive the following estimate.  
\begin{corollary}\label{FCcor}
Let $F$ be a level $4q$ half-integral weight $(\kappa+1)/2$ cusp form with $\kappa\geq 2$.  If $n$ odd square-free and relatively prime to $q$ with $\chi_n(-1) = i^\kappa$ then \est{c_F(n) \ll_F n^{1/6}( \log n )^{7/6} C^{\nu(n)/6}(\log (\nu(n)+1))^{(2+4/C)/3}.}  If $\kappa = 2$ the power of $\log n$ above is increased to $4/3$.
\end{corollary}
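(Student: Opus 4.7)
The plan is to derive Corollary \ref{FCcor} as a direct application of Kohnen's Waldspurger-type identity, combined with the subconvex bound of Corollary \ref{cor1}. By Corollary 1 of \cite{KohnenGeneralLevel}, for $n$ odd square-free coprime to $q$ and satisfying $\chi_n(-1) = i^\kappa$, there is an identity of the form
\est{|c_F(n)|^2 = c(F) \cdot L(1/2, f \otimes \chi_n),}
where $f$ is the weight $\kappa$ holomorphic newform of level $q$ corresponding to $F$ under the Shimura lift and $c(F) > 0$ depends only on $F$. The normalization factor $n^{(\kappa-1)/4}$ adopted in the definition of $c_F(n)$ is chosen precisely so that no power of $n$ appears on the right-hand side of this identity.

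The next step is to bound $L(1/2, f \otimes \chi_n)$ via Corollary \ref{cor1}. Since $f$ has fixed level $q$ rather than level dividing the conductor $n$ of $\chi_n$, a preliminary twisting step is required: using $\chi_q^2 = 1$ one rewrites
\est{L(s, f \otimes \chi_n) = L(s, g \otimes (\chi_q \chi_n)),}
with $g := f \otimes \chi_q$ again a newform of level $q$ and $\chi_q \chi_n$ primitive quadratic of conductor $qn$. Applying Corollary \ref{cor1} at modulus $qn$ then yields
\est{L(1/2, f \otimes \chi_n) \ll_f n^{1/3} (\log n)^{7/3} C^{\nu(n)/3} (\log(\nu(n)+1))^{(2+4/C)/3},}
after absorbing the fixed $q$-dependent quantities into the implicit constant; the case $\kappa = 2$ inherits the stronger $\log n$ exponent directly from Corollary \ref{cor1}. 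Taking the square root of this inequality and multiplying by $\sqrt{c(F)}$ gives, via the Kohnen identity of the previous step, the bound on $c_F(n)$ asserted in Corollary \ref{FCcor}.

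The one delicate point is matching the parity condition $(\chi_q \chi_n)(-1) = i^\kappa$ required by Corollary \ref{cor1}: if this fails, the root number of $L(s, f \otimes \chi_n)$ is forced to $-1$, so that $L(1/2, f \otimes \chi_n) = 0$ and hence $c_F(n) = 0$ by the Kohnen identity, rendering the claimed bound trivially true. Beyond this sign bookkeeping the argument is routine, and in particular requires no further input from the Motohashi-type machinery of Theorem \ref{Thm2}.
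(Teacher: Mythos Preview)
Your overall strategy --- Kohnen's Waldspurger-type identity followed by Corollary~\ref{cor1} --- is exactly the paper's (one-line) derivation, and you correctly flag a level-matching issue the paper glosses over: Corollary~\ref{cor1} requires the form's level to divide the conductor of the twisting character, whereas here $f$ has level $q$ and $\chi_n$ has conductor $n$ with $(q,n)=1$.

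However, your resolution contains a genuine error. For a newform $f$ of exact level $q$ with trivial nebentypus, the twist $g = f \otimes \chi_q$ is \emph{not} of level $q$ but of level $q^2$: at each prime $p \mid q$ the local component of $f$ is a special (Steinberg-type) representation, and twisting it by the ramified quadratic character of conductor $p$ yields a representation of conductor $p^2$. Since $q^2 \nmid qn$ when $(q,n)=1$ and $q>1$, the hypothesis ``level dividing $qn$'' of Corollary~\ref{cor1} fails for $g$, and your argument breaks down precisely at the step you introduced to repair the hypothesis mismatch. (Nor can one escape by enlarging the modulus to $q^2 n$, since that is no longer square-free.)

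The clean case is $q=1$, where no twisting is needed and Corollary~\ref{cor1} applies directly with modulus $n$. For $q>1$ one should instead invoke the routine extension of Theorem~\ref{MP} (equivalently, of the Conrey--Iwaniec argument) to forms of fixed level coprime to the conductor of $\chi$, with the level-dependence absorbed into the implied constant $\ll_F$; this is standard but is not literally what Corollary~\ref{cor1} says. Your treatment of the parity/root-number issue at the end is correct.
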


Corollary 2 has applications to the rate of equidistribution of integral points on ellipsoids, including the most interesting case of those lying in $\R^3$.  See Iwaniec \cite{ClassIw} chapter 11.

As in Conrey and Iwaniec's original paper \cite{ConreyIwaniec} we have given complete proofs only in the case of holomorphic forms, as the most interesting application of the Motohashi-like formula in Theorem \ref{Thm2} is the case of small weight $\kappa=2$.  Nonetheless, the proofs should carry over to the case of Maass forms of weight 0 and Eisenstein series by replacing the Petersson formula with the Kuznetsov formula.  Making this substitution changes the $J$-Bessel function to a more general integral transform of the chosen weight function on the spectral side.  The formula \eqref{Jintegralformula} for the $J$-Bessel function that we use in section \ref{sec:statphase} has an analogue which is needed in the Kuznetsov case, in which the interior $\sin$ in \eqref{Jintegralformula} is replaced by any of $\pm\{ \cos, \sin, \cosh,\sinh\}$.  See the work of Young \cite{YoungCubic} where a similar stationary phase argument is carried out in the generality needed for the Kuznetsov formula.  

\section*{Acknowledgement}
I would like to express my deep appreciation to Philippe Michel, Paul Nelson, Kannan Soundararajan, Akshay Venkatesh and Matthew Young for many helpful discussions, and to thank the \'Ecole Polytechnique F\'ed\'erale de Lausanne and the Swiss National Fund for their generous financial support.

\section{Standard Initial Steps}\label{Initial}

Let \es{\label{Vdef}V_{1/2+\alpha_i}(x) \eqdef \frac{1}{2\pi i} \int_{(2)} \frac{\Gamma(s+\kappa/2) }{(s-\alpha_i)} (2\pi)^{-s} x^{-s} \,ds} and \est{\Lambda_0(\alpha_i,\chi) \eqdef \sum_{n=1}^\infty \frac{\lambda_f(n)\chi(n)}{n^{1/2}} V_{1/2+\alpha_i}(n/q).}  We then have a standard approximate functional equation.  \begin{proposition}[Approximate Functional Equation]\label{AFE} We have \est{\Lambda(1/2+\alpha_i,f\otimes \chi) = \Lambda_0(\alpha_i,\chi) +\Lambda_0(-\alpha_i,\chi).}\end{proposition}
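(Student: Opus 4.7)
The plan is to unfold $\Lambda_0(\alpha_i,\chi)$ into a contour integral of the completed $L$-function $\Lambda(1/2+s,f\otimes\chi)$ divided by $(s-\alpha_i)$, shift the contour past the pole at $s=\alpha_i$ to produce $\Lambda(1/2+\alpha_i,f\otimes\chi)$, and then apply the functional equation to identify the shifted integral with $-\Lambda_0(-\alpha_i,\chi)$. The weight function $V_{1/2+\alpha_i}$ has been engineered precisely so that this telescoping works.

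For the first step, insert the Mellin--Barnes definition of $V_{1/2+\alpha_i}(n/q)$ from \eqref{Vdef} into $\Lambda_0(\alpha_i,\chi)$ and interchange sum and integral; this is legal because on $\real(s)=2$ the series $L(1/2+s,f\otimes\chi)$ is absolutely convergent and $\Gamma(s+\kappa/2)$ decays exponentially in $|\imag(s)|$. Using the identity $L_\infty(1/2+s,f\otimes\chi) = (q/2\pi)^s\Gamma(s+\kappa/2)$, the inner Dirichlet series and the archimedean factor assemble into the completed $L$-function, yielding
\est{
\Lambda_0(\alpha_i,\chi) = \frac{1}{2\pi i}\int_{(2)}\frac{\Lambda(1/2+s,f\otimes\chi)}{s-\alpha_i}\,ds.
}

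For the second step, shift the contour from $\real(s)=2$ to $\real(s)=-2$. Since $f\otimes\chi$ is cuspidal, $\Lambda(1/2+s,f\otimes\chi)$ is entire, so the only pole crossed is the simple pole at $s=\alpha_i$, which contributes the residue $\Lambda(1/2+\alpha_i,f\otimes\chi)$. The horizontal segments at heights $\pm T$ vanish as $T\to\infty$ by exponential decay of $\Gamma$ along vertical strips together with the polynomial growth of $L(1/2+s,f\otimes\chi)$ given by Phragm\'en--Lindel\"of. In the remaining integral along $\real(s)=-2$, apply the functional equation $\Lambda(1/2+s,f\otimes\chi) = \Lambda(1/2-s,f\otimes\chi)$ and substitute $s\mapsto -s$; the contour becomes $(2)$ with reversed orientation and the denominator $s-\alpha_i$ becomes $-(s+\alpha_i)$. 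The two sign changes combine, and the resulting integral is exactly the unfolding representation of the previous display but with $\alpha_i$ replaced by $-\alpha_i$, carrying an overall minus sign. This identifies the shifted integral with $-\Lambda_0(-\alpha_i,\chi)$, and rearranging yields the claim.

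There is no genuine obstacle here; the argument is the standard derivation of an approximate functional equation from a Mellin--Barnes representation of the weight function. The only care required is the bookkeeping of signs in the $s\mapsto -s$ substitution and confirming the decay of the integrand along the horizontal closing segments, both of which are routine given entireness of $\Lambda$ and exponential decay of $\Gamma$.
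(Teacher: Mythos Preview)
Your proof is correct and is exactly the standard argument from Iwaniec--Kowalski \S5.2, which is precisely what the paper cites as its proof. The unfolding, contour shift, functional equation, and $s\mapsto -s$ substitution are all handled correctly.
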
 \begin{proof} See Iwaniec and Kowalski \cite{IK} section 5.2.\end{proof}  Applying this we obtain \est{\sum_{f\in \mathscr{F}_\kappa(q) }\omega_f \Lambda(1/2+\alpha_1,f\otimes \chi) \Lambda(1/2+\alpha_2,f\otimes \chi)\Lambda(1/2+\alpha_3,f\otimes \chi) = \sum_{\sigma \in(\Z/2\Z)^3} \Delta(\mathscr{F},\sigma\alpha),} where \est{\Delta(\mathscr{F},\alpha) \eqdef  \prod_{i=1}^3 \frac{\chi(n_i)}{n_i^{1/2}} V_{1/2+\alpha_i}(n_i/q) \sum_{f\in \mathscr{F}_\kappa(q)} \omega_f \lambda_f(n_1)\lambda_f(n_2)\lambda_f(n_3) .} We work with a single $\Delta(\mathscr{F},\alpha)$ and leave the sum over $(\Z/2\Z)^3$ to the end.

Next we apply the Petersson trace formula to $\Delta(\mathscr{F},\alpha)$.  Following Conrey and Iwaniec formulae (2.9) and (2.11) we set \est{J(x) \eqdef 4 \pi i^\kappa x^{-1} J_{\kappa-1}(2 \pi x)} with $J_\nu(y)$ the standard $J$-Bessel function of the first kind.  The Petersson formula is \es{\label{Petersson}\sum_{f\in \mathscr{F}_\kappa (q)} \omega_f \lambda_f(m)\lambda_f(n) = \delta_{m=n} + \sqrt{mn} \sum_{c \equiv 0\pmod q} \frac{S(m,n,c)}{c^2}J(2 \sqrt{mn}/c),} where \est{S(m,n,c) \eqdef \sum_{\substack{a \pmod c \\ (a,c)=1}} e\left(\frac{am+\overline{a}n}{c}\right)} is the standard Kloosterman sum.  Let $e_c(x) = e(x/c) = e^{2 \pi i x/c}$ and define \est{V(x_1,x_2,x_3) \eqdef  V_{1/2+\alpha_1}(x_1) \sum_{(d,q)=1} \frac{1}{d}V_{1/2+\alpha_2}(dx_2)V_{1/2+\alpha_3}(dx_3).} Using Hecke multiplicativity and the Petersson formula \eqref{Petersson}, we find that \es{\label{offD}\Delta(\mathscr{F},\alpha) = \mathscr{D}_\alpha+\sum_{c \equiv 0(q)} \frac{\mathscr{S}_\alpha(c)}{c^2},} where the diagonal is given by \est{\mathscr{D}_\alpha \eqdef  \sum_{(n_1,q)=1}\frac{1}{n_1}  \sum_{n_1=n_2n_3} V\left(\frac{n_1}{q},\frac{n_2}{q},\frac{n_3}{q}\right) } and the off-diagonal is given by \est{\mathscr{S}_\alpha(c) \eqdef  \sum_{(n_1,n_2,n_3) \in \N^3} \chi(n_1n_2n_3) S(n_1,n_2n_3,c) J\left(2 \frac{\sqrt{n_1n_2n_3}}{c}\right) V\left(\frac{n_1}{q},\frac{n_2}{q},\frac{n_3}{q}\right) .}  

We now apply Poisson summation 3 times to $\mathscr{S}_\alpha(c)$ and change variables to find that \es{\label{S}\mathscr{S}_\alpha(c) = \sum_{(m_1,m_2,m_3)\in\Z^3}G(m_1,m_2,m_3,c)\check{W}_\alpha(m_1,m_2,m_3,c),} where following the notation of Conrey and Iwaniec, we have defined \est{G(m_1,m_2,m_3,c) \eqdef \sum_{(a_1,a_2,a_3) \in \left(\Z/c\Z\right)^3} \chi(a_1a_2a_3)S(a_1,a_2a_3,c)e_c(m_1a_1+m_2a_2+m_3a_3)} and \es{\label{Wdef}\check{W}_\alpha(m_1,m_2,m_3,c) \\ \eqdef \iiint_{\R^3_{>0}} J(2 \sqrt{cx_1x_2x_3})V\left(\frac{cx_1}{q},\frac{cx_2}{q},\frac{cx_3}{q}\right)e(-m_1x_1-m_2x_2-m_3x_3)dx_1\,dx_2\,dx_3 .}  
The $G(m_1,m_2,m_3,c)$ here is identical to that of Conrey and Iwaniec studied in sections 10,11,13 and 14 of their paper, and is independent of $\alpha$. We study $\check{W}_\alpha$ extensively in section \ref{sec:statphase}.  The formula \eqref{S} gives a decomposition of $\mathscr{S}_\alpha(c)$ into archimedian and non-archimedian parts, that is to say, $G$ is purely arithmetic and $\check{W}_\alpha$ is purely analytic. 

\section{The Main Terms}\label{Diagonal}

In this section we prove that 

\est{\sum_{\sigma \in (\Z/2\Z)^3} \left(\mathscr{D}_{\sigma\alpha} + \sum_{c\equiv 0 \pmod q} \frac{1}{c^2} \sum_{m_1m_2m_3=0} G(m_1,m_2,m_3,c) \check{W}_{\sigma \alpha} (m_1,m_2,m_3,c)\right) \\ = \sum_{\sigma \in (\Z/2\Z)^3} MT(\mathscr{F},\sigma \alpha) + O_\eps(q^{-1/3+\eps}).}
Set \es{\label{L}L(u_1,u_2,u_3) \eqdef  (u_1+u_2)(u_2+u_3)(u_3+u_1)MT(\mathscr{F},u).} The function $L$ is holomorphic, symmetric, and rapidly decaying in vertical strips in the region \est{\{(u_1,u_2,u_3)\in \C^3| \real(u_i)>-\kappa/2 \text{ for } i=1,2,3\}.} One has \begin{small}\es{\label{D}\mathscr{D}_\alpha=\frac{1}{(2\pi i )^3} \int_{(1/3)}\int_{(5/12)}\int_{(1/2)} \frac{L(u_1,u_2,u_3)}{(u_1-\alpha_1)(u_2-\alpha_2)(u_3-\alpha_3)(u_1+u_2)(u_2+u_3)(u_3+u_1)}\,du_3\,du_2\,du_1.}\end{small}  
Shifting the contours in \eqref{D} produces terms of the form \es{\label{H} M(\alpha,\beta,\gamma) \eqdef \frac{1}{2\pi i}\int_{(1/3)} \frac{L(u,-u,\gamma)}{(u-\alpha)(-u-\beta)(\gamma-u)(\gamma+u)}\,du.}  The function $L(u,-u,\gamma)$ is at least constant-sized on any vertical strip, so we cannot resolve $M(\alpha, \beta, \gamma)$ by contour shifting.  However, if we set \est{ N(\alpha,\beta,\gamma) \eqdef M(\alpha,\beta,\gamma) + \frac{L(\beta,-\beta, \gamma)}{(-\beta-\alpha)(\gamma + \beta)(\gamma -\beta)} + \frac{L(\gamma,-\gamma,\gamma)}{(\gamma-\alpha)(-\gamma-\beta)(2\gamma)}} then shifting contours gives \es{\label{Nshift}N(\alpha,\beta,\gamma) = N(\beta,\alpha,\gamma).} 
An intricate but elementary contour shift calculation shows that \es{\label{DiagFinal} \mathscr{D}_\alpha = MT(\mathscr{F},\alpha) + N(\alpha_2,\alpha_3,\alpha_1) + N(\alpha_1,\alpha_3,\alpha_2) + N(\alpha_1,\alpha_2,\alpha_3)  + \frac{1}{2}\frac{L(0,0,0)}{(-\alpha_1)(-\alpha_2)(-\alpha_3)} \\  + O_\eps(q^{-1/3+\eps}).}  
The resulting asymptotic formula \eqref{DiagFinal} for $\mathscr{D}_\alpha$ is symmetric in $\alpha_1,\alpha_2,\alpha_3$ due to \eqref{Nshift}. The terms of \eqref{DiagFinal} given by $N$ and $L(0,0,0)$ do not appear in the final answer predicted by the conjectures of the five authors \cite{CFKRS}.  Some of these terms cancel out after introducing the sum over $(\Z/2\Z)^3$, and others will be cancelled by off-diagonal main terms.

Our next goal in this section is to calculate the contribution of those terms of the off-diagonal (see \eqref{offD}) \est{\sum_{c \equiv 0 \pmod q} \frac{\mathscr{S}_\alpha(c)}{c^2} = \sum_{c \equiv 0 \pmod q}\frac{1}{c^2}\sum_{(m_1,m_2,m_3)\in\Z^3}G(m_1,m_2,m_3,c)\check{W}_\alpha(m_1,m_2,m_3,c)} whose indices satisfy $m_1m_2m_3=0$, i.e. which lie on one of the coordinate planes in $\Z^3$.  

We use the calculation of the arithmetic sum $G$ from Conrey and Iwaniec.  For the full statement of their calculation of $G$, see Lemma \ref{Gcalc} in section \ref{Dual}.  In this section we record only the following special cases.  Let $R_k(m)=S(0,m,k)$ denote the Ramanujan sum, and assume $m_i\neq 0$ for $i=1,2,3$.

\es{\label{G000}G(0,0,0,rq) = \begin{cases} \chi(-1) q \phi(q)^2 & r=1 \\ 0 & r>1,\end{cases}}
\es{\label{Gm00}G(m_1,0,0,rq) =   \chi(-1) r^2 q  \phi(q) R_q(m_1) \mathds{1}_{(m_1q,r)=1}, }
\es{\label{G0m0}G(0,m_2,0,rq) = \begin{cases} \chi(-1) q \phi(q) R_q(m_2) & r=1 \\ 0 & r>1,\end{cases}} and symmetrically for $G(0,0,m_3,rq)$,
\es{\label{G0mm}G(0,m_2,m_3,rq) = \begin{cases} \chi(-1) q R_q(m_2)R_q(m_3) & r=1 \\ 0 & r>1, \end{cases}}
\es{\label{Gmm0}G(m_1,m_2,0,rq) = \chi(-1) r^2 q R_q(m_1)R_q(m_2)\mathds{1}_{(m_1q,r)=1},} and symmetrically $ G(m_1,0,m_3,rq)$.

To calculate the analytic part, we use the following easily established Mellin transforms.
\es{\label{MellinJ}J(2\sqrt{c x_1x_2x_3}) =  \frac{\chi(-1)}{2\pi i } \int_{(3/4)} (2\pi)^{2s} \frac{\Gamma(\kappa/2-s)}{\Gamma(\kappa/2 +s) }(cx_1x_2x_3)^{s-1}\,ds} which is valid \`a priori on vertical lines $1/4< \real(s)< \kappa/2$, see for example formula 17.43.16 of Gradshteyn and Ryzhik \cite{GR7}.  
\est{e(-mx) = \frac{1}{2\pi i} \int_{(1/2)} \frac{\Gamma(s)}{(2\pi i m)^s} x^{-s}\,ds} valid \`a priori on vertical lines $0<\real(s) <1$, see formulas 17.43.3 and 17.43.4 of \cite{GR7}.  We have 
\est{V_{1/2+\alpha}(nx) = \frac{1}{2\pi i} \int_{(2)} \frac{\Gamma(\kappa/2+u)}{(u-\alpha)} (2\pi n)^{-u}x^{-u}\,du} for $n\in \N$ by definition, and 
\es{\label{Ve}\int_0^\infty V_{1/2+\alpha}(nx)  e(-m x) x^{s}\,\frac{dx}{x} = \frac{1}{2\pi i }\int_{(1/4)} \frac{\Gamma(\kappa/2+u)}{(u-\alpha)} (2\pi n)^{-u}\frac{\Gamma(s-u)}{(2\pi i m)^{s-u}}\,du} by Mellin convolution.  Shifting the contour sufficiently far to the left, such a formula is valid for any $\real(s)>\real(\alpha)$. One can rigorously justify the interchange of integrations by splitting the $x$-integral in two and applying integration by parts (i.e. the below Lemma \ref{BKYlemma}) to the tail.  

Given these formulae, one easily establishes the following Mellin inversion formulas for the integral $\check{W}_\alpha(m_1,m_2,m_3,c)$ assuming $m_i\neq 0$ for $i=1,2,3$.  

\es{\label{W000}\check{W}_\alpha(0,0,0,q) = \frac{\chi(-1)}{q} \frac{1}{2\pi i } \int_{(3/4)} \frac{\Gamma(\kappa/2-s)\Gamma(\kappa/2+s)^2(q/2\pi)^s}{(s-\alpha_1)(s-\alpha_2)(s-\alpha_3)}  \zeta_q(1+2s) \,ds,}
\es{\label{Wm00}\check{W}_\alpha(m_1,0,0,rq) = \frac{\chi(-1)}{rq} \frac{1}{2\pi i}\int_{(3/4)} \frac{\Gamma(\kappa/2-s)\Gamma(\kappa/2+s)}{(s-\alpha_2)(s-\alpha_3)} \zeta_q(1+2s)\left(\frac{q}{r|m_1|}\right)^s \\ \times \frac{1}{2\pi i } \int_{(1/4)} \frac{\Gamma(\kappa/2 + u)}{(u-\alpha_1)} \left(\frac{|m_1|}{2\pi r}\right)^u \frac{\Gamma(s-u)}{(2\pi i \sgn m_1)^{s-u}}\,du\,ds,}
\es{\label{W0m0}\check{W}_\alpha(0,m_2,0,q) = \frac{\chi(-1)}{q} \frac{1}{2\pi i}\int_{(3/4)} \frac{\Gamma(\kappa/2-s)\Gamma(\kappa/2+s)}{(s-\alpha_1)(s-\alpha_3)}\left(\frac{q}{|m_2|}\right)^s \\ \times \frac{1}{2\pi i } \int_{(1/4)} \frac{\Gamma(\kappa/2 + u)}{(u-\alpha_2)} \left(\frac{|m_2|}{2\pi }\right)^u \frac{\Gamma(s-u)}{(2\pi i \sgn m_2)^{s-u}}\,du\,ds,} and similarly if the roles of $m_2$ and $m_3$ are reversed,
\es{\label{W0mm}\check{W}_\alpha(0,m_2,m_3,q) = \frac{\chi(-1)}{q} \frac{1}{2\pi i}\int_{(3/4)}  \frac{\Gamma(\kappa/2 -s)}{(s-\alpha_1)} \left(\frac{2 \pi q}{|m_2m_3|}\right)^s \frac{1}{(2\pi i)^2} \iint_{(1/4)} \prod_{i=2}^3 \frac{\Gamma(u_i+\kappa/2)}{(u_i-\alpha_i)} \\ \times \left(\frac{|m_i|}{2\pi } \right)^{u_i} \frac{\Gamma(s-u_i)}{(2\pi i \sgn m_i)^{s-u_i}} \zeta_q(1+u_2+u_3)\,du_i\,ds,}
\es{\label{Wmm0}\check{W}_\alpha(m_1,m_2,0,rq) = \frac{\chi(-1)}{rq} \frac{1}{2\pi i}\int_{(3/4)}  \frac{\Gamma(\kappa/2 -s )}{(s-\alpha_3)} \left(\frac{2 \pi q}{|m_1m_2|}\right)^s \frac{1}{(2\pi i )^2} \iint_{(1/4)} \prod_{i=1}^{2}  \frac{\Gamma(u_i+\kappa/2)}{(u_i-\alpha_i)} \\ \times \left(\frac{|m_i|}{2\pi r} \right)^{u_i} \frac{\Gamma(s-u_i)}{(2\pi i \sgn m_i)^{s-u_i}} \zeta_q(1+s+u_1)\,du_i\,ds,} and similarly if the role of $m_2$ is played by $m_3$ instead.  Note in particular that all of holomorphic functions appearing above are rapidly decaying in the appropriate vertical strips, so we are free to use contour shift arguments in the following.

Recall the definition of $L(u_1,u_2,u_3)$ from \eqref{L}.

\begin{lemma}\label{000}
We have as $q\rightarrow \infty$ that \est{ & \sum_{r=1}^\infty\frac{1}{r^2q^2}G(0,0,0,rq)\check{W}_\alpha(0,0,0,rq) \\ =&  \frac{L(\alpha_1,\alpha_1,-\alpha_1)}{(\alpha_1-\alpha_2)(\alpha_1-\alpha_3)(2 \alpha_1)} + \frac{L(\alpha_2,\alpha_2,-\alpha_2)}{(\alpha_2-\alpha_1)(\alpha_2-\alpha_3)(2\alpha_2)}  +\frac{L(\alpha_3,\alpha_3,-\alpha_3)}{(\alpha_3-\alpha_1)(\alpha_3-\alpha_2)(2 \alpha_3)} \\ &+ \frac{1}{2} \frac{L(0,0,0)}{(-\alpha_1)(-\alpha_2)(-\alpha_3)}+ O(q^{-1/2}).}
\end{lemma}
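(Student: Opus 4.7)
The plan is a direct contour shift, made clean by recognizing the integrand of \eqref{W000} as essentially $L(s,s,-s)/(2s)$ up to easily tracked factors. First, from \eqref{G000} only the term $r=1$ survives on the left-hand side, contributing $\chi(-1)q\varphi(q)^2$. Combining this with the $\chi(-1)/q$ prefactor of \eqref{W000} and the $1/q^2$ from $c=q$, and using $\chi(-1)^2=1$, the sum becomes
\est{\frac{\varphi(q)^2}{q^3}\cdot\frac{1}{2\pi i}\int_{(3/4)} \frac{\Gamma(\kappa/2-s)\Gamma(\kappa/2+s)^2(q/2\pi)^s \zeta_q(1+2s)}{(s-\alpha_1)(s-\alpha_2)(s-\alpha_3)}\,ds.}

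The next step is to identify this integrand with $L(s,s,-s)$. Unwinding the definitions of $L$ and $MT$ and using $\lim_{v\to 0} v\,\zeta_q(1+v) = \varphi(q)/q$ to interpret the two factors $\zeta_q(1+u_2+u_3)$ and $\zeta_q(1+u_3+u_1)$ of $MT$ on the slice $u=(s,s,-s)$ (where $L$ is entire because the polynomial prefactor of $L$ cancels the corresponding simple poles of $MT$), one computes
\est{L(s,s,-s) = 2s\cdot\frac{\varphi(q)^2}{q^2}\cdot\zeta_q(1+2s)\,\Gamma(\kappa/2-s)\Gamma(\kappa/2+s)^2(q/2\pi)^s.}
The $\varphi(q)^2/q^2$ factor cancels with the prefactor above, so the sum equals
\est{\frac{1}{2\pi i}\int_{(3/4)}\frac{L(s,s,-s)}{2s(s-\alpha_1)(s-\alpha_2)(s-\alpha_3)}\,ds.}

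Now shift the contour from $\real(s)=3/4$ to $\real(s)=-1/2$. Since $L$ is entire and $\kappa/2\geq 1$, the only poles crossed are the simple poles at $s=\alpha_1,\alpha_2,\alpha_3,0$, whose residues are exactly the four main terms in the statement of the lemma. Rapid decay of the $\Gamma$ factors on vertical lines justifies the contour shift. For the remaining integral on $\real(s)=-1/2$, the only surviving $q$-dependence of the integrand (after the cancellation above) is the $(q/2\pi)^s = q^{-1/2}$ in $L(s,s,-s)$, while the convexity bound $\zeta(1+2it)\ll(1+|t|)^\eps$ together with exponential decay of the $\Gamma$ factors in $|\imag s|$ yields a convergent $t$-integral. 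This establishes the error term $O(q^{-1/2})$.

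The only potentially subtle point is the interpretation of $L(s,s,-s)$: although $L$ is entire, two of its three "cancellations" of poles occur simultaneously on the slice $u_2+u_3=u_3+u_1=0$, so one must unfold this carefully to extract the correct combinatorial factor $(\varphi(q)/q)^2$. Once this identification is in hand, the lemma follows from the residue calculus described above.
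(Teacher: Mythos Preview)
Your proof is correct and is exactly the approach the paper sketches (it cites \eqref{G000}, \eqref{W000}, and a contour shift without further detail). Two small slips: the prefactor after combining $G$ and $\check{W}_\alpha$ should be $\varphi(q)^2/q^2$, not $\varphi(q)^2/q^3$ (your next line uses the right value anyway), and on $\real(s)=-1/2$ the zeta factor is $\zeta_q(it')$ with $\real(1+2s)=0$, not $\zeta(1+2it)$, though the $\Gamma$-decay still gives the $O(q^{-1/2})$ bound.
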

\begin{proof} Follows directly from \eqref{G000}, \eqref{W000}, and a contour shift.
\end{proof}

Now recall the definition of $M(u_1,u_2,u_3)$ from \eqref{H}.

\begin{lemma}\label{m2m3}
We have as $q \rightarrow \infty$ that \est{\sum_{r=1}^\infty \sum_{m_2 \neq 0 } \frac{G(0,m_2, 0,rq)}{r^2q^2} \check{W}_\alpha(0,m_2,0,rq) = -M(\alpha_1,-\alpha_3,\alpha_2)+ O(q^{-1/2}),} and similarly \est{\sum_{r=1}^\infty \sum_{m_3 \neq 0 } \frac{G(0,0, m_3,rq)}{r^2q^2} \check{W}_\alpha(0,0,m_3,rq) = -M(\alpha_1,-\alpha_2,\alpha_3)+ O(q^{-1/2}).}
\end{lemma}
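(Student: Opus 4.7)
The plan is to unpack the off-diagonal via \eqref{G0m0} and \eqref{W0m0}, collapse the $m_2$-sum into an Euler-factor identity, and read off $-M(\alpha_1,-\alpha_3,\alpha_2)$ from a single residue. By \eqref{G0m0} only the $r=1$ term survives, with $G(0,m_2,0,q)=\chi(-1)q\phi(q)R_q(m_2)$. I would substitute the Mellin representation \eqref{W0m0} (carrying the $\zeta_q(1+s+u)$ factor coming from the $\sum_{(d,q)=1}d^{-1}$ that defines $V$) and interchange the $m_2$-summation with the $s,u$-integrals, justified by Gamma decay in vertical strips. Pairing $m_2$ with $-m_2$ and using $R_q(-m)=R_q(m)$, the phase $1/(2\pi i\sgn m_2)^{s-u}$ collapses to $2(2\pi)^{u-s}\cos(\pi(s-u)/2)$, and for prime $q$ one has $\sum_{m\ge 1}R_q(m)m^{u-s}=\zeta(s-u)(q^{1+u-s}-1)$. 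The functional equation $\zeta(1-z)=2(2\pi)^{-z}\Gamma(z)\cos(\pi z/2)\zeta(z)$ with $z=s-u$ absorbs the $\Gamma(s-u)$, the cosine, and the $(2\pi)^{u-s}$ factors into $\zeta(1+u-s)$, and then the Euler-factor identity $(q^w-1)\zeta(w)=q^w\zeta_q(w)$ at $w=1+u-s$ combined with the outer $q^s$ reshapes the integrand into
\begin{equation*}
\frac{\phi(q)}{q}\cdot\frac{1}{(2\pi i)^2}\iint\frac{\Gamma(\kappa/2-s)\Gamma(\kappa/2+s)\Gamma(\kappa/2+u)\,\zeta_q(1+s+u)\zeta_q(1+u-s)(q/2\pi)^u}{(s-\alpha_1)(s-\alpha_3)(u-\alpha_2)}\,du\,ds.
\end{equation*}

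Next I would shift the $u$-contour leftward from $\real(u)=1/4$ to $\real(u)=-1/2-\eps$. Only the simple pole at $u=\alpha_2$ is crossed: the nearest pole of $\Gamma(\kappa/2+u)$ lies at $u=-\kappa/2\le -1$ for $\kappa\ge 2$, the pole of $\zeta_q(1+s+u)$ at $u=-s$ sits at $\real(u)=-3/4$, and the pole of $\zeta_q(1+u-s)$ at $u=s$ is to the right. The shifted integral carries $(q/2\pi)^u$ with $\real(u)=-1/2-\eps$ and so contributes $O(q^{-1/2})$. The residue at $u=\alpha_2$ equals
\begin{equation*}
\frac{\phi(q)}{q}\bigl(q/(2\pi)\bigr)^{\alpha_2}\Gamma(\kappa/2+\alpha_2)\cdot\frac{1}{2\pi i}\int_{(3/4)}\frac{\Gamma(\kappa/2-s)\Gamma(\kappa/2+s)\zeta_q(1+s+\alpha_2)\zeta_q(1+\alpha_2-s)}{(s-\alpha_1)(s-\alpha_3)}\,ds.
\end{equation*}
Feeding $(u,-u,\alpha_2)$ into the definition \eqref{L} of $L$ — where the zero $(u_1+u_2)|_{u_2=-u}$ cancels the pole of $\zeta_q(1+u_1+u_2)$ with residue $\phi(q)/q$ — and inserting into \eqref{H}, one finds that $-M(\alpha_1,-\alpha_3,\alpha_2)$ has exactly the same integrand after renaming $s\leftrightarrow u$ and shifting the contour from $\real(u)=1/3$ to $\real(u)=3/4$; no poles lie in this strip, since the pole of $\zeta_q(1-u+\alpha_2)$ at $u=1+\alpha_2$ is to the right and all other poles lie to the left of $1/3$.

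The companion identity for $m_3$ follows from the evident symmetry of $V(x_1,x_2,x_3)$ under $(x_2,\alpha_2)\leftrightarrow(x_3,\alpha_3)$. The main obstacle I anticipate is the bookkeeping in the first step: tracking signs and powers of $2\pi$ through the Mellin/Fourier inversion, and recognising that the Euler-factor identity $(q^w-1)\zeta(w)=q^w\zeta_q(w)$ is precisely what promotes the bare $\zeta$ coming from the Ramanujan-sum Dirichlet series into the second $\zeta_q$ without which the identification with $-M$ would fail.
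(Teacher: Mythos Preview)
Your proposal is correct and follows essentially the same route as the paper: combine \eqref{G0m0} with \eqref{W0m0}, collapse the $m_2$-sum into $\zeta_q(1-s+u)q^{1-(s-u)}$, and shift the $u$-contour to pick up the residue at $u=\alpha_2$. The paper packages your cosine/functional-equation/Euler-factor manoeuvre into the single line ``using the asymmetric functional equation,'' but the content is identical. One harmless slip: the pole of $\zeta_q(1+\alpha_2-u)$ sits at $u=\alpha_2$ (where the argument equals $1$), not at $u=1+\alpha_2$; since $\real(\alpha_2)\ll 1/\log q$ this is still to the left of $1/3$, so your contour shift from $1/3$ to $3/4$ remains unobstructed and the identification with $-M(\alpha_1,-\alpha_3,\alpha_2)$ goes through.
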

\begin{proof}
We prove only the first formula as the second is identical after swapping $m_2$ and $m_3$.  Putting \eqref{G0m0} and \eqref{W0m0} together we are led to consider the Dirichlet series associated with the Ramanujan sums $R_q(m)$.  We have the nice formula \es{\label{nice formula} \sum_{m\geq 1} \frac{R_q(m)}{m^s} = \zeta(s) q^{1-s} \prod_{p \mid q} \left(1-p^{-(1-s)}\right),}  and in fact we have \est{  \frac{\Gamma(s-u)}{(2\pi i )^{s-u} }\sum_{m_2\geq 1} \frac{R_q(m_2)}{m_2^{s-u}} + \frac{\Gamma(s-u)}{(-2\pi i )^{s-u} }\sum_{m_2\leq -1} \frac{R_q(m_2)}{|m_2|^{s-u}} = \zeta_q(1-s+u)q^{1-(s-u)} } using the asymmetric functional equation.  Then we have that \est{q^{-2}\sum_{m_2 \neq 0 } G(0,m_2, 0,q) \check{W}_\alpha(0,m_2,0,q) = & \frac{\phi(q)}{q} \frac{1}{2\pi i } \int_{(3/4)} \frac{\Gamma(\kappa/2-s)\Gamma(\kappa/2+s)}{(s-\alpha_1)(s-\alpha_3)}  \frac{1}{2\pi i } \int_{(1/4)} \frac{\Gamma(k/2+u)}{(u-\alpha_2)} \\ & \times \left(\frac{q}{2\pi}\right)^u \zeta_q(1+s+u)\zeta_q(1-s+u)\,du\,ds.}  We may now shift the $u$ integral to $\real(u) =-1/2$ and pick pick up the residue at $u=\alpha_2$ to conclude the lemma.
\end{proof}

 \begin{lemma}\label{m1}
We have as $q \rightarrow \infty$ that \est{\sum_{r=1}^\infty \frac{1}{r^2q^2}\sum_{m_1 \neq 0} G(m_1,0,0,rq) \check{W}_\alpha(m_1, 0,0,rq) = -M(\alpha_2,-\alpha_3,\alpha_1)+ O(q^{-1/2}).}
\end{lemma}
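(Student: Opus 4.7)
The plan is to follow the same template as the proof of Lemma \ref{m2m3}, the new wrinkle being that the formula \eqref{Gm00} gives $G(m_1,0,0,rq)\neq 0$ for every $r$ with $(m_1q,r)=1$, whereas $G(0,m_2,0,rq)$ vanished for $r>1$. Accordingly, one now has to evaluate a non-trivial sum over $r$ in addition to the sum over $m_1$.

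First I would substitute \eqref{Gm00} and \eqref{Wm00}. The factor $r^2$ in $G(m_1,0,0,rq)$ cancels the $r^{-2}$ from the Petersson normalization, and the two $\chi(-1)$'s square to one, leaving the prefactor $\phi(q)/(q^2 r)$ multiplying the Mellin--Barnes double integral in $s,u$, together with $R_q(m_1)\,\mathds{1}_{(m_1q,r)=1}$ and the monomial factors $(q/(r|m_1|))^s\,(|m_1|/(2\pi r))^u$ inside. Second, I would perform the sum over $m_1\neq 0$ via the asymmetric functional equation of $\zeta$, exactly as in the proof of Lemma \ref{m2m3}, but with the coprimality condition $(m_1,r)=1$ retained. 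This injects an extra Euler-product factor into the Dirichlet series for $R_q(m)$:
\begin{equation*}
\sum_{\substack{m\neq 0\\(m,r)=1}} R_q(m)\,|m|^{u-s}\,\frac{\Gamma(s-u)}{(2\pi i\,\sgn m)^{s-u}} \;=\; q^{1-(s-u)}\,\zeta_q(1-s+u)\,\prod_{p\mid r}\bigl(1-p^{-(s-u)}\bigr).
\end{equation*}

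Third, I would carry out the sum over $r$. A direct Euler-product computation yields the identity
\begin{equation*}
\sum_{\substack{r\geq 1\\(r,q)=1}} \frac{1}{r^{1+s+u}}\prod_{p\mid r}\bigl(1-p^{-(s-u)}\bigr) \;=\; \frac{\zeta_q(1+s+u)}{\zeta_q(1+2s)}.
\end{equation*}
The pleasant feature is that this $\zeta_q(1+2s)$ in the denominator exactly cancels the $\zeta_q(1+2s)$ already present in the Mellin expansion \eqref{Wm00}. After this cancellation, the remaining double integral in $s,u$ coincides with the one produced in the proof of Lemma \ref{m2m3}, up to the interchange of $\alpha_1$ and $\alpha_2$ in the denominators of the Mellin integrand.

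Finally, I would shift the $u$-contour from $\real(u)=1/4$ to $\real(u)=-1/2$, picking up the simple-pole residue at $u=\alpha_1$; the Gamma and zeta factors have no other poles in the strip between these two lines. Packaging the archimedean Gamma factors as $L_\infty(\tfrac12+\cdot)$, using $(u_1+u_2)\zeta_q(1+u_1+u_2)\to \phi(q)/q$ as $u_1+u_2\to 0$, and matching against the definition \eqref{H} of $M$, this residue evaluates to $-(\phi(q)/q)\,M(\alpha_2,-\alpha_3,\alpha_1)$. The remaining shifted integral on $\real(u)=-1/2$ is $O(q^{-1/2})$ by trivial estimates on the Gamma and zeta factors along vertical lines, and $\phi(q)/q = 1+O(q^{-1})$ is absorbed into the error. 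The substantive new ingredient relative to Lemma \ref{m2m3} is the $r$-sum identity above and, in particular, the exact cancellation of the $\zeta_q(1+2s)$ factor; everything else is a recycling of the argument for Lemma \ref{m2m3}.
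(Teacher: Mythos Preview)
Your proposal is correct and follows essentially the same approach as the paper: combine \eqref{Gm00} and \eqref{Wm00}, sum over $m_1$ with the coprimality constraint via the asymmetric functional equation, then sum over $r$ via the Euler-product identity $\sum_{(r,q)=1} r^{-(1+s+u)}\prod_{p\mid r}(1-p^{-(s-u)}) = \zeta_q(1+s+u)/\zeta_q(1+2s)$ (which cancels the $\zeta_q(1+2s)$ from \eqref{Wm00}), and finally shift the $u$-contour to pick up the residue at $u=\alpha_1$. One minor bookkeeping remark: since $L(u,-u,\gamma)$ already carries the factor $\phi(q)/q$ from the limit $(u_1+u_2)\zeta_q(1+u_1+u_2)\to\phi(q)/q$, the residue matches $-M(\alpha_2,-\alpha_3,\alpha_1)$ exactly rather than $-(\phi(q)/q)\,M$, so no absorption step is actually needed.
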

\begin{proof} In similar fashion to the proof of the previous lemma we combine formulas \eqref{Gm00} and \eqref{Wm00}.  The sum over $m_1$ leads us to use the formula \est{\sum_{\substack{m_1\neq 0 \\ (m_1,r)=1}} \frac{R_q(m_1)}{|m_1|^{s-u}} \frac{\Gamma(s-u)}{(2\pi i \sgn m_i)^{s-u} }  =  \zeta_q(1-s+u)\prod_{p \mid r} \left(1-p^{-(s-u)}\right) q^{1-(s-u)} .} We next use the formula \est{ \sum_{\substack{r\geq 1 \\ (r,q)=1}} \frac{1}{r^{1+s+u}} \prod_{p \mid r} \left(1-p^{-(s-u)}\right) = \frac{\zeta_q(1+s+u)}{\zeta_q(1+2s)}} to evaluate the sum over $r$.  Assembling these pieces we have that \est{\sum_{r=1}^\infty \frac{1}{r^2q^2}\sum_{m_1 \neq 0} G(m_1,0,0,rq) \check{W}_\alpha(m_1, 0,0,rq) =  \frac{\phi(q)}{q} \frac{1}{2\pi i } \int_{(3/4)} \frac{\Gamma(\kappa/2-s)\Gamma(\kappa/2+s)}{(s-\alpha_2)(s-\alpha_3)} \\ \times \frac{1}{2\pi i } \int_{(1/4)} \frac{\Gamma(k/2+u)}{(u-\alpha_1)}  \left(\frac{q}{2\pi}\right)^u \zeta_q(1+s+u)\zeta_q(1-s+u)\,du\,ds.}  We may now shift the $u$ integral to $\real(u) =-1/2$ and pick pick up the residue at $u=\alpha_1$ to conclude the lemma.
\end{proof}

We have now found all of the main terms for this cubic moment which are predicted by the conjectures of Conrey et al \cite{CFKRS}.  Re-introducing the sum over $(\Z/2\Z)^3$ from section \ref{Initial} the reader will observe the cancellation of many terms from \eqref{DiagFinal} and Lemmas \ref{000}, \ref{m2m3} and \ref{m1}.

Although not contributing to the main terms, observe the terms which come from $(m_1,m_2,m_3)$ lying on a coordinate plane but not on a coordinate axis also are also easily estimable by the techniques of this section.  For example combining equations \eqref{G0mm}, \eqref{W0mm} and formulas similar to \eqref{nice formula} one is led to \est{& q^{-2}\sum_{\substack{m_2 \neq 0 \\ m_3 \neq 0}} G(0,m_2,m_3,q)\check{W}_\alpha(0,m_2,m_3,q) \\ = & \frac{1}{2 \pi i  } \int_{(3/4)} (2 \pi)^s \frac{\Gamma(\kappa/2 -s)}{(s-\alpha_1)} q^{-s} \frac{1}{(2\pi i )^2} \iint_{(1/8)} \prod_{i=2}^3 \frac{\Gamma(\kappa/2 +u_i)}{(u_i-\alpha_i)} \left(\frac{q}{2\pi}\right)^{u_i} \\ & \times \zeta_q(1-s+u_i)\zeta_q(1+u_2+u_3)\,du_i\,ds\ll q^{-1/2}.}  One may treat in exactly the same fashion the sums arising from the terms \eqref{Gmm0} and \eqref{Wmm0} and likewise find that these are $\ll q^{-1/2}$.

\section{An Integral}\label{sec:statphase}

In this section we give Mellin formulas for the $m_1m_2m_3 \neq 0$ case of the integral $\check{W}_\alpha(m_1,m_2,m_3,c)$ (cf. Lemma 8.1 of \cite{ConreyIwaniec} or \cite{YoungCubic}). Recall the definition of $\check{W}_\alpha:$ \est{\check{W}_\alpha= \iiint_{\R^3}J(2\sqrt{cx_1x_2x_3}) V\left(\frac{cx_1}{q},\frac{cx_2}{q},\frac{cx_3}{q}\right)e(-m_1x_1-m_2x_2-m_3x_3)\,dx_1\,dx_2\,dx_3.}  
For large arguments the function $J(x)$ oscillates with unit period.  We run an elaborate stationary phase argument on the Bessel function and the complex exponentials above to derive a Mellin formula for $\check{W}_\alpha$.  In the cases where no stationary point of the phase exists we are able to compute the Mellin transforms directly.  Our stationary phase argument is based on section 8 of a paper of Blomer, Khan and Young \cite{BKY} but adapted to three variables using ideas from Stein \cite{SteinHarmonicAnalysis} chapter VIII.  

\begin{proposition}\label{Oscillatory}
Suppose that either all $m_i>0$ or all $m_i<0$. We have that \es{\label{oscillatoryformula}\check{W}_{\alpha}(m_1,m_2,m_3,c) = T_1 + T_2 + O\left(\min\left(q^{-2014}, \frac{q^{12}}{c^9}\frac{1}{|m_1m_2m_3|^2}\right) \right)}where \est{T_1 = & e\left(-\frac{m_1m_2m_3}{c}\right)\frac{\chi(-1)}{c} \frac{1}{(2\pi i)^4} \int_{(3/4)}\iiint_{(1/16)} \frac{U_1(s,u_1,u_2,u_3)\zeta_q(1+u_2+u_3)}{(u_1-\alpha_1)(u_2-\alpha_2)(u_3-\alpha_3)}   \\ & \times \left(\frac{c}{|m_1m_2m_3|}\right)^s \left(\frac{|m_2m_3|}{q}\right)^{-u_1}\left(\frac{|m_1m_3|}{q}\right)^{-u_2}\left(\frac{|m_1m_2|}{q}\right)^{-u_3}\,du_1\,du_2\,du_3\,ds} for a holomorphic function $U_1$ rapidly decaying in the vertical strips \est{ -\frac{1}{2} -\real(\sum u_i) < & \real(s) <1 \\ -\frac{\kappa}{2}< & \real(u_i).}  Similarly \est{T_2 = & \frac{\chi(-1)}{c} \frac{1}{(2\pi i)^4} \int_{(3/4)}\iiint_{(1/16)} \frac{U_2(s,u_1,u_2,u_3)\zeta_q(1+u_2+u_3)}{(u_1-\alpha_1)(u_2-\alpha_2)(u_3-\alpha_3)}   \\ &  \times \left(\frac{c}{|m_1m_2m_3|}\right)^s \left(\frac{|m_2m_3|}{q}\right)^{-u_1}\left(\frac{|m_1m_3|}{q}\right)^{-u_2}\left(\frac{|m_1m_2|}{q}\right)^{-u_3}\,du_1\,du_2\,du_3\,ds} for a holomorphic function $U_2$ rapidly decaying in the vertical strips \es{\label{regionvalid}\max_{i\neq j} \left( -\real(u_i+u_j) , -\real(\sum u_i)\right) < & \real(s) <\frac{\kappa}{2} -\real(\sum u_i) \\ -\frac{\kappa}{2}< & \real(u_i).} Moreover, $U_2$ has a meromorphic continuation with a simple polar divisor at $s+u_1+u_2+u_3 = \kappa/2$.  

Suppose now that the $m_i$ are of mixed signs.  We have \est{\check{W}_\alpha(m_1,m_2,m_3,c) = T_3(m_1,m_2,m_3,c)} where $T_3$ has the same definition as $T_2$, but where the holomorphic function also depends on the signs of the $m_i$.  Otherwise each of these holomorphic functions have exactly the same above properties as $U_2$.
\end{proposition}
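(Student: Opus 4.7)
The plan is a stationary-phase analysis of $\check W_\alpha$ in the spirit of Blomer--Khan--Young \cite{BKY}, adapted to three variables using the apparatus from Stein \cite{SteinHarmonicAnalysis}. First I would open up $V$ via its defining Mellin representation, introducing three auxiliary variables $u_1,u_2,u_3$ and turning the $x$-dependence of $V$ into explicit powers $x_i^{-u_i}$ (times a kernel of Gamma factors). Second I would insert the integral formula \eqref{Jintegralformula} for $J_{\kappa-1}(2\pi y)$, in which $y$ appears inside an interior $\sin$; writing $\sin = (e^{i\cdot}-e^{-i\cdot})/(2i)$ splits the Bessel factor into two pieces and creates a fourth integration variable $t$. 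After these preparations the problem reduces to analyzing two four-dimensional oscillatory integrals with phases
\[
\Phi_\pm(x_1,x_2,x_3,t) = \pm 2\sqrt{c\,x_1x_2x_3}\,\sin t - (m_1x_1+m_2x_2+m_3x_3).
\]

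Next I would locate stationary points. The equations $\partial_{x_i}\Phi_\pm = 0$ and $\partial_t\Phi_\pm = 0$ force $\cos t = 0$ (so $\sin t = \pm 1$) together with $m_i = \pm\sqrt{c\,x_jx_k/x_i}\,\sin t$. In the positive octant a simultaneous solution exists only when all three $m_i$ share a common sign, which is the source of the dichotomy between the same-sign case (producing $T_1+T_2$) and the mixed-sign case (producing $T_3$). Where it exists, the stationary point satisfies $x_1x_2x_3 = (m_1m_2m_3)^2/c^3$; evaluating $\Phi_\pm$ there gives $-|m_1m_2m_3|/c$, and when all $m_i$ share a sign this is exactly $-m_1m_2m_3/c$, reproducing the oscillatory prefactor $e(-m_1m_2m_3/c)$ in $T_1$.

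From here I would carry out two parallel computations. For the stationary branch, multivariable stationary phase (compute the Hessian, sort out its signature, and apply integration by parts to the remainder) peels off the prefactor and a smooth amplitude; re-expressing that amplitude through one additional Mellin variable $s$ yields the Mellin--Barnes shape of $T_1$ and identifies the vertical strip on which $U_1$ is holomorphic. For the non-stationary pieces (the opposite-sign branch in the same-sign case, and both branches in the mixed-sign case), the integral is computed directly by Mellin inversion, combining \eqref{MellinJ} for $J$, the Mellin transforms of $e(-m_ix_i)$, and the power factors from $V$, exactly in the spirit of \eqref{Wm00}--\eqref{Wmm0}; this yields $T_2$ (respectively $T_3$) and exposes the simple pole of $U_2$ at $s+u_1+u_2+u_3 = \kappa/2$ as inherited from the Gamma pole in \eqref{MellinJ}. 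The two-pronged error bound is produced by choosing between repeated non-stationary integration by parts (Lemma \ref{BKYlemma}, giving the arbitrary power saving $q^{-2014}$ whenever the parameters are well separated) and a single-step stationary-phase remainder estimate (giving the polynomial bound $q^{12}/(c^9|m_1m_2m_3|^2)$ when they are comparable in size).

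The main obstacle will be the bookkeeping needed to establish holomorphy and the claimed rapid decay of $U_1$ and $U_2$ on the precise vertical strips \eqref{regionvalid} uniformly in all parameters, and to pin down the polar divisor of $U_2$. A secondary issue is that the four-dimensional phase is degenerate along the scaling $x_i\mapsto\lambda x_i$ (reflecting homogeneity of $\sqrt{x_1x_2x_3}$), so some care is needed in choosing coordinates for the stationary phase expansion---most cleanly by rescaling to place the stationary point at a fixed location before expanding. Neither obstacle is conceptual; the work parallels \cite{BKY} and \cite{YoungCubic}, but it is where the bulk of the calculation lives.
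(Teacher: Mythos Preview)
Your plan is essentially the paper's: open $J$ via the integral representation \eqref{Jintegralformula}, locate the unique stationary point when the $m_i$ share a sign, extract the prefactor $e(-m_1m_2m_3/c)$ by stationary phase to produce $T_1$, and compute the remaining pieces by direct Mellin inversion to produce $T_2$ (or $T_3$ in the mixed-sign case). The paper carries this out sequentially rather than in four variables at once: three-variable stationary phase in $\mathbf{x}$ for each fixed $\theta$ (via localization Lemmas~\ref{localizationlemma1}--\ref{localizationlemma2}, an explicit Morse change of variables, and a Fourier-inversion/Taylor expansion to evaluate the localized integral), followed by a separate one-variable stationary phase in $\theta$ (Lemma~\ref{statphase2}). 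This is an implementation choice, not a different strategy.

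Two corrections to your outline. First, your phase is \emph{not} degenerate under $x_i\mapsto\lambda x_i$: $\sqrt{x_1x_2x_3}$ scales as $\lambda^{3/2}$ while $\sum m_ix_i$ scales as $\lambda$, so the Hessian at the critical point is honestly nondegenerate (as the paper's explicit Morse coordinates confirm). This worry is unfounded. Second, $T_2$ is not produced solely by the opposite-sign branch. Before any stationary phase the paper splits off the region $c_\theta x_1x_2x_3\lesssim 1$ (where the Bessel function is non-oscillatory) via a smooth cutoff $w_1$, obtaining a decomposition $\check W_\alpha=\check R_\alpha+\text{(two oscillatory pieces)}$. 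The term $\check R_\alpha$, computed via the Mellin transform of the truncated Bessel function $R$, is a $T_2$-type term and is precisely what carries the simple pole at $s+u_1+u_2+u_3=\kappa/2$; the opposite-sign oscillatory branch contributes the other half of $U_2$. You cannot use \eqref{MellinJ} on a single $\pm$ branch as you suggest, since that formula is for the full $J$; the cutoff $w_1$ is what regularizes the Mellin transforms of the individual branches.
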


\begin{proof}
We consider first the most difficult case when all $m_i>0$ or all $m_i<0$.  Computing directly the Mellin transform of $\check{W}_\alpha$ in these cases gives a function which does not decay rapidly in vertical strips.  We instead apply the method of stationary phase.  We use the formula  \es{\label{Jintegralformula}J_{\kappa-1}(x) = & \frac{1}{\pi} \int_{0}^\pi \cos(x \sin \theta - (\kappa-1)\theta)\,d\theta,} valid for integral $\kappa -1$, from which we find \est{J(x) = 2\chi(-1)\sum_{\pm} \int_0^\pi e(\mp(\kappa-1) \theta/2\pi) \frac{e(\pm x \sin \theta)}{x}\,d\theta .}  We insert this formula for $J$ in the definition of $\check{W}_\alpha$ and pull the integrals over $\theta$ outside.  Let $w_1(t)$ denote a smooth function on $\R_{>0}$ which is identically $0$ for $t \leq 1/2$ and identically 1 for $t\geq 1$. Set $c_\theta = c \sin^2 \theta$. Split the 3 defining integrals of $\check{W}_\alpha$ at $c_\theta x_1x_2x_3 =1$ using the function $w_1$.  Below the hyperboloid $c_\theta x_1x_2x_3 =1$ we move the $\theta$ integrals back inside.
On a first reading the reader will lose no essential details by taking $\theta=\pi/2$ so that $c_\theta=c$, as this is the ``true'' phase of the Bessel function for large $x$ anyhow. 
These maneuvers result in the decomposition: \es{\label{decomposition1} \check{W}_\alpha = \check{R}_\alpha + 2i^\kappa \int_0^\pi e(-(\kappa-1) \theta/2\pi) (\sin \theta)  \check{U}^+_\alpha(\theta)\,d\theta + 2i^\kappa \int_0^\pi  e((\kappa-1) \theta/2\pi) (\sin \theta) \check{U}^-_\alpha(\theta)\,d\theta,} where: \begin{itemize} 
\item The term $\check{R}_\alpha$ represents the integral under the hyperbola, i.e. \est{\check{R}_\alpha \eqdef \iiint_{\R^3}R(2\sqrt{cx_1x_2x_3}) V\left(\frac{cx_1}{q},\frac{cx_2}{q},\frac{cx_3}{q}\right)e(-m_1x_1-m_2x_2-m_3x_3)\,dx_1\,dx_2\,dx_3} with \est{R(x) \eqdef \frac{4\chi(-1)}{x} \int_0^\pi \cos(2 \pi x \sin \theta - (\kappa-1)\theta)\left(1-w_1\left((x\sin \theta)^2/4\right)\right)\,d\theta.} The function $R(x)$ is identically equal to $J(x)$ when $x<\sqrt{2}$, satisfies the bound $x^n R^{(n)}(x) \ll_n 1$ for all $n\in \N$, and is $\ll_\kappa x^{-2}$ for large $x$. 
\item The $\check{U}^\pm_\alpha(\theta)$ are oscillatory integrals to which we apply the method of stationary phase.  Specifically, define the amplitude \es{\label{RVdef} V(\mathbf{x}) \eqdef V\left(\frac{cx_1}{q},\frac{cx_2}{q},\frac{cx_3}{q}\right)\frac{w_1(c_\theta x_1x_2x_3)}{2\sqrt{c_\theta x_1x_2x_3}}} and the phase \est{f^\pm(\mathbf{x}) \eqdef \pm2\sqrt{c_\theta x_1x_2x_3} -m_1x_1-m_2x_2-m_3x_3.}  Then the oscillatory integrals are given by \est{\check{U}^\pm_\alpha \eqdef \iiint_{\R_{>0}^3}V(\mathbf{x})e(f^\pm(\mathbf{x}))\,d\mathbf{x}.} The amplitude function is non-oscillatory, i.e. it satisfies \est{ x_1^{n_1}x_2^{n_2}x_3^{n_3}\partial^{(n_1,n_2,n_3)}V(\mathbf{x})\ll_{n_1,n_2,n_3,A} \left(1+\frac{c|x_1|}{q}\right)^{-A} \left(1+\frac{c|x_2|}{q}\right)^{-A} \left(1+\frac{c|x_3|}{q}\right)^{-A}.}
\end{itemize}
We will be able to give the Mellin transform of $\check{R}_\alpha$ directly so we instead first focus on the oscillatory integrals $\check{U}^\pm_\alpha$.   
A critical point of $f^+(\mathbf{x})$ exists in the positive octant of $\mathbf{x} \in \R^3$ if and only if all $m_i>0$.  Similarly a critical point of $f^-(\mathbf{x})$ exists in the positive octant of $\mathbf{x} \in \R^3$ if and only if all $m_i<0$.  We focus on the $\check{U}^+_\alpha$ and ``all $m_i>0$'' case until further notice, the ``all $m_i<0$'' case being treated similarly.  In the $\check{U}^+_\alpha$ case the unique critical point occurs at $x_0= \left(\frac{m_2m_3}{c_\theta}, \frac{m_1m_3}{c_\theta},\frac{m_1m_2}{c_\theta}\right)$ and we apply the method of stationary phase to $\check{U}^+_\alpha$ at $x_0$.  

Before all else, if any of the $m_i$ or $c$ are extremely large relative to $q$, we may integrate $\check{U}^\pm_\alpha$ by parts several times to obtain the error term in \eqref{oscillatoryformula}.  This allows us to keep track of only the $q$ dependence in the error terms in the following application of the stationary phase method.

We recall an extremely useful Lemma of Blomer, Khan and Young from \cite{BKY}.  The essence of this Lemma is ``integration by parts''.

\begin{lemma}[Blomer-Khan-Young] \label{BKYlemma}
 Let $Y \geq 1$, $X, Q, U, R > 0$,  
and suppose that $w$ 
 is a smooth function with support on $[\alpha, \beta]$, satisfying
\begin{equation*}
w^{(j)}(t) \ll_j X U^{-j}.
\end{equation*}
Suppose $h$ 
  is a smooth function on $[\alpha, \beta]$ such that
\est{ |h'(t)| \geq R}
for some $R > 0$, and
\est{h^{(j)}(t) \ll_j Y Q^{-j}, \qquad \text{for } j=2, 3, \dots.}
Then the integral $I$ defined by
\begin{equation*}
I = \int_{-\infty}^{\infty} w(t) e^{i h(t)} dt
\end{equation*}
satisfies
\est{ I \ll_A (\beta - \alpha) X [(QR/\sqrt{Y})^{-A} + (RU)^{-A}],} where the implied constants depend only on $A$. 
\end{lemma}

Apply a smooth dyadic partition of unity to the integral $\check{U}^+_\alpha$ which localizes the variable $x_i$ at $X_i$.  In Lemma \ref{BKYlemma} we take \est{X = & 1, \\ U= & X_i,\\ R=& X_i^{-1}(\sqrt{c_\theta X_1X_2X_3}-m_iX_i) \\ Y= & \sqrt{c_\theta X_1X_2X_3} \\ Q= & X_i.}  Let $\eps_1, \eps_2>0$ be small real numbers.  Lemma \ref{BKYlemma} says that the integral $\check{U}^+_\alpha$ on the $X_1,X_2,X_3$-piece of the partition is extremely small if any of the following three hold: \es{\label{starfirst} \left| \sqrt{c_\theta X_1X_2X_3} -m_1X_1\right|  >  q^{\eps_1} \text{ and } \left| \sqrt{c_\theta X_1X_2X_3} -m_1X_1\right|   >  q^{\eps_2} (c_\theta X_1X_2X_3)^{1/4}\\  \left| \sqrt{c_\theta X_1X_2X_3} -m_2X_2\right|  >  q^{\eps_1} \text{ and } \left| \sqrt{c_\theta X_1X_2X_3} -m_2X_2\right|   >  q^{\eps_2} (c_\theta X_1X_2X_3)^{1/4}\\ \left| \sqrt{c_\theta X_1X_2X_3} -m_3X_3\right|  >  q^{\eps_1} \text{ and } \left| \sqrt{c_\theta X_1X_2X_3} -m_3X_3\right|   >  q^{\eps_2} (c_\theta X_1X_2X_3)^{1/4}. }  We study the complimentary set to these inequalities where Lemma \ref{BKYlemma} is not applicable.  The description of this set and the behavior of our integral naturally breaks into two cases depending on the size of $m_1m_2m_3/c_\theta$.

\begin{lemma}[Localization 1]\label{localizationlemma1}
Suppose that $m_1m_2m_3/c_\theta > q^\delta $ for some small $\delta >0$.  Let $R_1 \subset \R_{>0}^3$ be the region surrounding the point \est{x_0 = \left(\frac{m_2m_3}{c_\theta},\frac{m_1m_3}{c_\theta},\frac{m_1m_2}{c_\theta}\right)} cut out by the inequalities \est{\left| \frac{m_2m_3}{c_\theta} -x_1\right| \leq & \frac{q^{\delta/2}}{m_1}\sqrt{\frac{m_1m_2m_3}{c_\theta}}\\ \left| \frac{m_1m_3}{c_\theta} -x_2\right| \leq &  \frac{q^{\delta/2}}{m_2}\sqrt{\frac{m_1m_2m_3}{c_\theta}}\\ \left| \frac{m_1m_2}{c_\theta} -x_3\right| \leq & \frac{q^{\delta/2}}{m_3}\sqrt{\frac{m_1m_2m_3}{c_\theta}}.} 
Let $w_0(t)$ denote a smooth function identically $1$ if $|t|\leq 1/2$ and identically $0$ if $|t|\geq 1$ and \est{L_1 = \iiint_{\R^3} V(\mathbf{x}) e(f^+(\mathbf{x})) \prod_{i=1}^3 w_0\left(\frac{\left| \frac{m_1m_2m_3}{c_\theta}-m_ix_i\right|}{q^{\delta/2} \sqrt{m_1m_2m_3/c_\theta}}\right)\,d\mathbf{x}.}  We have for sufficiently large $q$ that \est{\check{U}^+_\alpha = L_1 + O_\delta(q^{-2014}).}
\end{lemma}

\begin{proof}
We apply Lemma \ref{BKYlemma} to the integral $\check{U}^+_\alpha$.  Let \est{A_{1,j} \eqdef \left| \sqrt{c_\theta x_1x_2x_3} -m_jx_j\right|   \leq &  q^{\eps_1}} and \est{ A_{2,j} \eqdef \left| \sqrt{c_\theta x_1x_2x_3} -m_jx_j\right|   \leq &  q^{\eps_2} (c_\theta x_1x_2x_3)^{1/4}} be regions in $\R^3_{>0}$.  The complement in $\R^3_{>0}$ of the region defined by \eqref{starfirst} is \es{\label{starwholething} \bigcup_{\substack{i_1=1,2 \\ i_2=1,2 \\ i_3=1,2 }} A_{i_1,1}\cap A_{i_2,2}\cap A_{i_3,3}} for any positive $\eps_1$ and $\eps_2$.  The integrand of $\check{U}^+_\alpha$ is supported within $c_\theta x_1x_2x_3>1/2$, so taking $\eps = \eps_2 = \eps_1 + \log 2 / \log q$ the region $A_{2,1}\cap A_{2,2} \cap A_{2,3}$ contains all of the other regions in \eqref{starwholething}.  Therefore it suffices to study the region \es{\label{star2} \{c_\theta x_1x_2x_3 >1/2\} \cap  A_{2,1}\cap A_{2,2} \cap A_{2,3}.}
We split $\R_{>0}^3$ into two disjoint cases: all $m_ix_i < 3(c_\theta x_1x_2x_3)^{1/4}q^{\eps}$ or at least one $m_ix_i \geq 3 (c_\theta x_1x_2x_3)^{1/4}q^{\eps}$.  The former case defines a region which lies completely under the hyperboloid $c_\theta x_1x_2x_3<1/2$ when $\eps \leq \delta/3 - \frac{\log 2^{1/12}/3}{\log q}$ so we assume this restriction on $\eps$ and focus on the case that at least one $m_ix_i \geq 3 (c_\theta x_1x_2x_3)^{1/4}q^{\eps}$.
By applying the triangle inequality to pairs $|m_ix_i-m_jx_j|$ and \eqref{star2} we have \es{\label{lowerbounds2}  \sqrt{c_\theta x_1x_2x_3} \geq & q^\eps(c_\theta x_1x_2x_3)^{1/4} \\ m_ix_i \geq & q^\eps(c_\theta x_1x_2x_3)^{1/4}  } for $i=1,2,3$.  
By multiplying together the inequalities \eqref{star2} we have \est{ \left|\frac{m_1m_2m_3}{c_\theta } - m_ix_i \right| \leq  q^\eps (c_\theta x_1x_2x_3)^{1/4} \left( 1+  \frac{m_1m_2x_1x_2}{c_\theta  x_1x_2x_3} + \frac{m_1m_3x_1x_3}{c_\theta x_1x_2x_3}+ \frac{m_2m_3x_2x_3}{c_\theta x_1x_2x_3} \right. \\ \left. +q^{\eps} \left(  \frac{m_1x_1}{(c_\theta  x_1 x_2 x_3)^{3/4}} + \frac{m_2x_2}{(c_\theta  x_1 x_2 x_3)^{3/4}}+ \frac{m_3x_3}{(c_\theta  x_1 x_2 x_3)^{3/4}}\right)+ q^{2\eps} \frac{1}{(c_\theta  x_1 x_2 x_3)^{1/2}} \right).} 
 The inequalities \eqref{star2} and \eqref{lowerbounds2} together imply \est{0 \leq \frac{m_ix_i}{\sqrt{c_\theta x_1x_2x_3}} \leq 2} for all $i=1,2,3$, and we obtain \est{\left| \frac{m_1m_2m_3}{c_\theta } - m_ix_i\right|  \leq 13 q^\eps (c_\theta  x_1 x_2 x_3)^{1/4} + 7q^{2\eps}.} Now we also have by multiplying the inequalities \eqref{star2} that \est{\sqrt{c_\theta x_1x_2x_3} \leq \frac{m_1m_2m_3}{c_\theta } + 12(c_\theta x_1x_2x_3)^{1/4}q^{\eps} + 7 q^{2\eps},}  so that \es{\label{loc123}\left| \frac{m_1m_2m_3}{c_\theta } - m_ix_i\right|  \leq 13 q^\eps \sqrt{\frac{m_1m_2m_3}{c_\theta }}  + (13(6+\sqrt{43})+7) q^{2\eps} . }  
This region lies strictly above the hyperboloid $c_\theta x_1x_2x_3 =1$ as soon as \est{\frac{m_1m_2m_3}{c_\theta } \left(\frac{m_1m_2m_3}{c_\theta } - 13 q^\eps \sqrt{\frac{m_1m_2m_3}{c_\theta }}  - (13(6+\sqrt{43})+7) q^{2\eps}\right)^3>1,} which must become true for sufficiently large $q$ with our previous restriction on $\eps$, and moreover also the region defined by \eqref{loc123} is contained within the one stated in the Lemma.

Thus we may pass to a sufficiently fine partition of unity and apply the stationary phase Lemma \ref{BKYlemma} to localize the integral $\check{U}_\alpha$ to the region $R_1$. \end{proof}

\begin{lemma}[Localization 2]\label{localizationlemma2}

Suppose that $m_1m_2m_3/c_\theta \leq q^{\delta_2} $ for some small $\delta_2 >0$.  Let $R_2 \subset \R_{>0}^3$ be the region surrounding the point \est{x_0 = \left(\frac{m_2m_3}{c_\theta},\frac{m_1m_3}{c_\theta},\frac{m_1m_2}{c_\theta}\right)} cut out by the inequalities \est{\left| \frac{m_2m_3}{c_\theta} -x_1\right| \leq & \frac{q^{\delta_2}}{m_1} \\ \left| \frac{m_1m_3}{c_\theta} -x_2\right| \leq &  \frac{q^{\delta_2}}{m_2} \\ \left| \frac{m_1m_2}{c_\theta} -x_3\right| \leq &  \frac{q^{\delta_2}}{m_3}.} 
Let $w_0(t)$ denote a smooth function identically $1$ if $|t|\leq 1/2$ and identically $0$ if $|t|\geq 1$ and let \est{ L_2 = \iiint_{\R^3} V(\mathbf{x}) e(f^+(\mathbf{x})) \prod_{i=1}^3w_0\left(\left| \frac{m_1m_2m_3}{c_\theta}-m_ix_i\right|q^{-\delta_2}\right)\,d\mathbf{x} .}  For sufficiently large $q$ we have that \est{\check{U}_\alpha = L_2 + O_{\delta_2}(q^{-2014}).} 
\end{lemma}

\begin{proof}
The proof is similar to that of Lemma \ref{localizationlemma1} so we omit it.
\end{proof}

Our next goal is to give an asymptotic formula for the integrals $L_i$ in Lemmas \ref{localizationlemma1} and \ref{localizationlemma2}.  We begin by making a change of variables.  Let $\mathcal{U}\subset \R^3$ be the eighth-space defined by \begin{small}\est{\left\{ (v_1,v_2,v_3)\in \R^3 \Bigg. \Bigg| v_1 > - \sqrt{ \left( v_2+ m_3 \sqrt{\frac{m_1}{c_\theta }} \right)\left( v_3+ m_2 \sqrt{\frac{m_1}{c_\theta }} \right)}, v_2 > - \sqrt{\frac{ m_1}{c_\theta } }m_3 , v_3 > -  \sqrt{\frac{ m_1}{c_\theta } } m_2) \right\} .}\end{small} Let the change-of-variables diffeomorphism $\varphi: \mathcal{U} \xrightarrow{\,\sim\,} \R^3_{>0}$ be defined by \est{ x_1 = & \frac{1}{m_1} \left(v_1 + \sqrt{\frac{x_2x_3 c_\theta }{m_1}}\right)^2 \\ x_2 = & \sqrt{\frac{m_1}{c_\theta }} \left( v_2+ m_3 \sqrt{\frac{m_1}{c_\theta }} \right) \\ x_3 = &  \sqrt{\frac{m_1}{c_\theta }} \left( v_3+ m_2 \sqrt{\frac{m_1}{c_\theta }} \right).}   
The change of variables $\varphi$ transforms the phase function $f^+(\mathbf{x})$ to \est{f^+(\varphi(\mathbf{v})) = -\frac{m_1m_2m_3}{c_\theta } -v_1^2+ v_2v_3,} and moves the critical point to the origin \est{\varphi(0) = x_0.}  This is the same change of variables as found in Conrey and Iwaniec Lemma 8.1.  Such a change of variables is guaranteed to exist (locally) by Morse's lemma, see Stein \cite{SteinHarmonicAnalysis} chapter VIII, section 2.3.2.  Geometrically, the box $R_1$ given by Lemma \ref{localizationlemma1} is, up to absolute constants, of size \est{ \frac{q^{\delta/2}}{m_1}\sqrt{\frac{m_1m_2m_3}{c_\theta }} \times \frac{q^{\delta/2}}{m_2}\sqrt{\frac{m_1m_2m_3}{c_\theta }} \times \frac{q^{\delta/2}}{m_3}\sqrt{\frac{m_1m_2m_3}{c_\theta }} } around the critical point $x_0$ in the $x$-space, and the box $R_2$ given by Lemma \ref{localizationlemma2} is up to absolute constants of size \est{ \frac{q^{\delta_2}}{m_1} \times \frac{q^{\delta_2}}{m_2} \times \frac{q^{\delta_2}}{m_3} } around the critical point $x_0$ in the $x$-space.  The regions $\varphi^{-1}(R_1)$ and $\varphi^{-1}(R_2)$ are also right-angle parallelepipeds in $v$-space.  The boxes $\varphi^{-1}(R_1)$ and $\varphi^{-1}(R_2)$ both surround the origin in $v$-space of are up to absolute constants of size \est{ q^{\delta/2}  \times q^{\delta/2} \sqrt{\frac{m_3}{m_2}} \times \ q^{\delta/2} \sqrt{\frac{m_2}{m_3}}, } and \est{ q^{\delta_2} \sqrt{\frac{c_\theta }{m_1m_2m_3}} \times \frac{q^{\delta_2}}{m_2} \sqrt{\frac{c_\theta }{m_1}}\times \frac{q^{\delta_2}}{m_3} \sqrt{\frac{c_\theta }{m_1}},} respectively (although the origin is no longer in the center of the box in the $v_1$ direction).  
The determinant of the Jacobian of the change of variables is \est{ (\det {\rm Jac } \,\varphi)(\mathbf{v}) = \frac{2}{c_\theta } \left( v_1 + \sqrt{\left( v_3+ m_2 \sqrt{\frac{m_1}{c_\theta }} \right) \left( v_2+ m_3 \sqrt{\frac{m_1}{c_\theta }} \right)} \right)= \frac{2\sqrt{m_1x_1}}{c_\theta }.  } In particular at the stationary point we have \est{ (\det {\rm Jac} \, \varphi)(0) = 2\sqrt{ \frac{m_1m_2m_3}{c_\theta ^3}}.} Recall the definition of $w_0$ from Lemma \ref{localizationlemma1} and $V(\mathbf{x})$ from \eqref{RVdef}, and set \es{\label{g1def} g_1(\mathbf{v}) \eqdef  V(\varphi(\mathbf{v})) |(\det {\rm Jac} \, \varphi)(\mathbf{v})|  w_0\left(C_1\frac{v_1}{q^{\delta/2}}\right)w_0\left(\frac{v_2}{q^{\delta/2}}\sqrt{\frac{m_2}{m_3 }}\right)w_0\left(\frac{v_3}{q^{\delta/2}}\sqrt{\frac{m_3}{m_2 }}\right)} and \est{g_2(\mathbf{v}) \eqdef  V(\varphi(\mathbf{v})) |(\det {\rm Jac} \, \varphi)(\mathbf{v})|  w_0\left(C_2\frac{v_1}{q^{\delta_2}}\sqrt{\frac{m_1m_2m_3}{c_\theta }}\right)w_0\left(\frac{v_2}{q^{\delta_2}}\sqrt{\frac{m_1m_2^2}{c_\theta }}\right)w_0\left(\frac{v_3}{q^{\delta_2}}\sqrt{\frac{m_1m_3^2}{c_\theta }}\right),} for some absolute constants $C_1$ and $C_2$.  The integrals $L_i$ for $i=1,2$ are transformed to \est{ L_i= e \left(-\frac{m_1m_2m_3}{c_\theta }\right) \iiint_{\R^3} g_i(\mathbf{v})  e(-v_1^2+v_2v_3)\,d\mathbf{v}+ O(q^{-2014}).} 

The support of $g_1(\mathbf{v})$ is the right angle parallelepiped given by the functions $w_0$ in \eqref{g1def}.  If $m_1m_2m_3/c_\theta >1/4$ the support of $g_2(\mathbf{v})$ is a right-angle parallelepiped but with possibly also one corner, edge or face truncated by the hyperboloid cut-off by function $V(\varphi(\mathbf{v}))$.  In particular, when $m_1m_2m_3/c_\theta >1/4$ the support of $g_2(\mathbf{v})$ at least contains the stationary point $x_0$ and $1/8$-th of the box $\varphi^{-1}(R_2)$ which points in the totally positive direction in $v$-space.  If $m_1m_2m_3/c_\theta \leq 1/4$ then the point $x_0$ lies outside the support of $g_2(\mathbf{v})$.  

By Fourier inversion we have for any small $\eps>0$ that\est{g_1(\mathbf{v}) =  \iiint_{\widehat{R}_i}  \widehat{g}_1(\mathbf{y}) e(\mathbf{v} \cdot \mathbf{y}) \,d\mathbf{y} + O_{\eps,\delta}(q^{-2014})} where \est{ \widehat{R}_i \eqdef \begin{cases} \substack{\begin{cases}|y_1| \ll  q^{\eps-\delta/2} \\ |y_2| \ll \sqrt{\frac{m_2}{m_3}} q^{\eps-\delta/2}  \\ |y_3| \ll \sqrt{\frac{m_3}{m_2}} q^{\eps-\delta/2} \end{cases} } & \text{ if } i = 1 \\ \substack{\begin{cases}|y_1| \ll  \sqrt{\frac{m_1m_2m_3}{c_\theta}}q^{\eps-\delta_2} \\ |y_2| \ll \sqrt{\frac{m_1m_2^2}{c_\theta }} q^{\eps-\delta_2}  \\ |y_3| \ll \sqrt{\frac{m_1m_3^2}{c_\theta }}q^{\eps-\delta_2} \end{cases} } & \text{ if } i=2 \text{ and } m_1m_2m_3/c_\theta >1/4 \\ \R^3_{>0} & \text{ if } i=2 \text{ and } m_1m_2m_3/c_\theta \leq 1/4 .\end{cases}}    
We may swap orders of integration, complete the square, and evaluate the integral over the $v_i$ to get \es{\label{ftL1}L_i= \frac{e(-1/8)}{\sqrt{2}} e\left(-\frac{m_1m_2m_3}{c_\theta }\right) \iiint_{\widehat{R}_i}   \widehat{g_1}(\mathbf{y}) e\left(\frac{y_1^2}{4}-y_2y_3\right)\,d\mathbf{y} + O_{\eps,\delta}( q^{-2014}).}  Now we take $\eps= \delta/4=\delta_2/4$ and use Taylor expansions.  Each of $|y_1|$ and $|y_2y_3|$ are restricted to be small in \eqref{ftL1}, and we need only finitely many terms of the Taylor expansion.  Quantitatively, if $m_1m_2m_3/c_\theta>1/4$ we take $N_1=N_{23} = 8065 \delta^{-1}$ to obtain \est{ e\left(\frac{y_1^2}{4}-y_2y_3\right) = \sum_{n_1=0}^{N_1}\sum_{n_{23}=0}^{N_{23}} \frac{ (\pi i y_1^2/2)^{n_1}(2\pi i y_2y_3)^{n_{23}}}{n_1!n_{23}!} + O_\delta\left(q^{-2016}\right).}  
We now have \est{L_i= \frac{e(-1/8)}{\sqrt{2}} e\left(-\frac{m_1m_2m_3}{c_\theta }\right)\sum_{\substack{ 0 \leq n_1 \leq N_1 \\ 0\leq n_{23} \leq N_{23} }} \frac{ (\pi i /2)^{n_1} (2\pi i )^{n_{23}}}{n_1!n_{23}!} \iiint_{\widehat{R}_i} \widehat{g_1}(\mathbf{y})y_1^{2n_1}(y_2y_3)^{n_{23}}\,d\mathbf{y}   \\ + O_{\delta}( q^{-2014}).}  
 
We now may extend the integrals in the above formulas for $L_1$ and $L_2$ to all of $\R^3$ without making new error terms.  We have that \est{ \iiint_{\R^3 } \widehat{g}_i(\mathbf{y})y_1^{2n_1}(y_2y_3)^{n_{23}}\,d\mathbf{y} = \frac{g_i^{(2n_1,n_2,n_3)}(0)}{(2\pi i )^{2n_1+2n_{23}}},} for $i=1,2$.  In the case $m_1m_2m_3/c_\theta \leq 1/4$ all of the derivatives vanish identically and we therefore ignore this range of $m_1m_2m_3/c_\theta$.    The germs of $g_1(\mathbf{v})$ and $g_2(\mathbf{v})$ at $\mathbf{v}=0$ are identical and so we may drop the distinction between these two functions.  Let e.g. $\delta = 1/10$.  We therefore have that whenever all $m_i>0$ and $m_1m_2m_3/c_\theta > 1/4$ that \es{\label{U+series}\check{U}_\alpha^+ = \frac{e(-1/8)}{\sqrt{2}} e\left(-\frac{m_1m_2m_3}{c_\theta }\right)\sum_{ \substack{0 \leq n_1 \leq N_1 \\ 0 \leq n_{23} \leq N_{23}}} \frac{g^{(2n_1,n_{23},n_{23})}(0) }{4^{n_1}(2 \pi i)^{n_1+n_{23}}n_1!n_{23}!}  + O(q^{-2014}) } with $N_1=N_{23} = 80650.$ If $m_1m_2m_3/c_\theta\leq 1/4$ then $\check{U}^+_\alpha$ is identically 0.

The same exact argument works with minor changes to evaluate $\check{U}^-_\alpha$ in the case that all $m_i<0$.  In particular we define the change of variables $\varphi$ replacing every instance of $m_i$ with $|m_i|$ to find that \est{f^-(\varphi(\mathbf{v})) = -\frac{m_1m_2m_3}{c_\theta} +v_1^2-v_2v_3.} The rest of the proof goes through as above with absolute values added around each $m_i$ in the definition of $g$ and we get exactly the same formula as \eqref{U+series} for $\check{U}^-_\alpha$ but with $e(1/8)$ in place of $e(-1/8)$.  

Now we return to our initial decomposition \eqref{decomposition1} and insert \eqref{U+series} for $\check{U}^\pm_\alpha$.  We evaluate the integrals over $\theta$ using the following 1-variable stationary phase Lemma.
\begin{lemma}\label{statphase2}  Let $s_{u}(\theta)$ be a $C^\infty([0,\pi])$ function of $\theta$ for fixed $u$ and a  holomorphic function of $u$ for $\real(u)> -1/2$.  Let $M\in \R_{>0}$ and suppose that \begin{enumerate}[(i)] \item For fixed $\theta$ the function $s_{u}(\theta)$ is polynomially bounded in vertical strips $\real(u) \geq \delta >-1/2$ for any such fixed $\delta$.  \item The function $s_u(\theta)$ is bounded and non-oscillatory as $\theta \rightarrow 0$ or $\pi$.  Specifically, \est{(\sin \theta)^n \frac{d^n}{d\theta^n} s_{u}(\theta) \ll_{n,u} (\sin^2 \theta)^{\real(u) + 1/2},} where the dependence on $u$ is polynomial in vertical strips, and uniform as $\real(u)\geq \delta >-1/2$ for any such fixed $\delta$.\end{enumerate}  Define
\est{ S_{u}(M)\eqdef \int_0^\pi s_{u}(\theta) w(M^2/\sin^4 \theta) e(-M \cot^2 \theta)\,d\theta,} where $w(x)$ is any smooth bounded non-oscillatory function on $\R_{>0}$ which is moreover identically 0 for $x<1/2$.  Then the function $S_{u}(M)$ is 
\begin{enumerate}[(a)] \item holomorphic and polynomially bounded as a function of $u$ for fixed $M$ \item $C^\infty(\R_{>0})$ as a function of $M$ for each fixed $u$ and non-oscillatory, i.e. satisfies \est{M^n \frac{d^n}{dM^n} S_u(M)\ll_{n,u} 1,} with polynomial dependence on $u$ in vertical strips.  \item  \est{ \begin{cases} \ll M^{1 + \real(u)} & M\ll 1\\  \frac{s_{M,u}(\pi/2)\sqrt{\pi}e(1/8)}{\sqrt{M}} +O_u(M^{-1}) & M\gg 1.\end{cases}} \end{enumerate}
\end{lemma}
\begin{proof}
The bounds in the hypothesis of the Lemma are given in terms of $\sin \theta$ because we are concerned with the behavior as $\theta \rightarrow 0 $ or $\pi$.  Near 0 we approximate $\sin \theta$ by $\theta$ and $\cot^2 \theta $ by $\theta^{-2}$.  Those $\theta$ near $\pi$ are treated symmetrically.

First, consequence (a) is easily established by the absolute convergence of the integral $S_u(M)$.  To establish consequences (b) and (c) we split the integral into three ranges using a smooth partition of unity \es{\label{sdecomp}S_u(M) = \left(\int_{{\rm I}} +  \int_{{\rm II}} + \int_{{\rm III}} \right) s_{u}(\theta) w(M^2/\sin^4 \theta) e(-M \cot^2 \theta)\,d\theta,} where the support of each component of the partition of unity is restricted to \est{{\rm I} \eqdef \{\theta | \sin \theta \leq \min (8 M^{1/2+\eps},1/25)\} } \est{{\rm II} \eqdef \{\theta | \min(4M^{1/2+\eps},1/50) < \sin \theta \leq \min (4 M^{1/2},1/50)\} } \est{{\rm III} \eqdef \{\theta | \sin \theta \geq \min (2 M^{1/2},1/100)\}. } The value of $\eps$ will be chosen in terms of the number of derivates $n$ we take to establish (b).  To establish (b) and (c) we need to control the behavior of $S_u(M)$ both as $M\rightarrow 0$ or $\infty$, and note that the range II is empty when $M\rightarrow \infty$ and that on range III the integral is identically 0 if $M \rightarrow 0$.  

First we consider the range III, and assume $M>1$.  We may differentiate under the integral as many times as we like, and thus this part of the integral is $C^{\infty}$.  As $M\rightarrow \infty$, we differentiate under the integral and run a standard stationary phase argument, say Proposition 8.2 of \cite{BKY}, on each of the derivatives.  Each differentiation produces a factor of $-\cot^2 \theta$ in the integrand which gives additional zeros at $\theta=\pi /2$.  By stationary phase these zeros give us additional decay in each derivative as $M\rightarrow \infty$, demonstrating parts (b) and (c). 

Next we consider the range II, and assume $M<1$, otherwise the set II is empty.  We may differentiate under the integral as often as we like and trivial bounds show that this part of \eqref{sdecomp} is $C^\infty$ and $O(M^{1+\real(u)})$ as $M\rightarrow 0$, matching the claim in (c).  To show (b) let us differentiate in $M$ under the integral $n$ times.  If $2n \leq 2\real(u)+1$ then trivial bounds suffice to show $M^n \frac{d^n}{dM^n}S_u(M) \ll_n 1$.  When $2n>2 \real(u)+1$ we take $\eps=1/4n$ to find that $M^n \frac{d^n}{dM^n}S_u(M) \ll_n M^{1/2+1/2n}\ll_n 1$ when $M<1$.  

Lastly we consider the range I.  In this range we may \textit{not} differentiate under the integral.  Instead we insert a dyadic partition of unity to the integral over the range I which descends into the trouble points $0$ and $\pi$.  Consider the point 0.  We let our partition of unity by formed by smooth functions $W(x)$ supported in $[1/4,1]$ satisfying $x^j W^{(j)}(x) \ll_j1$.  Then the half of the integral in the range I near 0 is \es{\label{theta dyads} \sum_{i=0}^\infty \int_{2^{-i}\min/4}^{2^{-i}\min} W(\theta 2^{i} \min )s_{u}(\theta) w(M^2/\sin^4 \theta) e(-M \cot^2 \theta)\,d\theta,} where $\min = \min (8 M^{1/2+\eps},1/25)$.  Let $\Theta_i = 2^{-i}\min.$  Then we may differentiate each integral in \eqref{theta dyads} in $M$ under the integral sign $n$ times, getting \es{\label{onediffthetadyad} \int_{\Theta_i/4}^{\Theta_i} W(\theta 2^{i} \min )s_{u}(\theta) w(M^2/\sin^4 \theta)(-2\pi i \cot^2 \theta)^n e(-M \cot^2 \theta)\,d\theta,} which converges absolutely.  Now we may apply Lemma \ref{BKYlemma} with $X=\Theta_i^{2 \real(u) +1-2n}, U=\Theta_i, R= M/\Theta_i^3, Y=M/\Theta_i^2, Q=\Theta_i$, to find that each of the integrals \eqref{onediffthetadyad} is \es{\label{thetadyadsumbound} \ll_A \Theta_i^{2(\real(u)+1-n)} (M/\Theta_i^2)^{-A},} for any $A>0$.  Therefore the infinite series \eqref{theta dyads} converges absolutely, the interchange of summation and differentiation is justified, and one easily establishes the lemma for range I.  Putting the ranges I,II and III together we conclude the Lemma.   
\end{proof}

Now we use Lemma \ref{statphase2} to evaluate the combination of \eqref{decomposition1} and \eqref{U+series} for both $\check{U}^\pm_\alpha$.  Take for example the first term of \eqref{U+series}:  \est{g(0) = &\frac{w_1((m_1m_2m_3/c_\theta)^2)}{\sqrt{c_\theta m_1m_2m_3}} \frac{1}{(2\pi i )^3} \iiint_{(1/16)} \prod_{i=1}^3\frac{\Gamma(\kappa/2+u_i)(2\pi)^{-u_i}}{(u_i-\alpha_i)}\zeta_q(1+u_2+u_3) \\ & \times \left(\frac{m_2m_3}{q\sin^2 \theta}\right)^{-u_1}\left(\frac{m_1m_3}{q\sin^2 \theta}\right)^{-u_2}\left(\frac{m_1m_2}{q\sin^2 \theta}\right)^{-u_3}\,du_1\,du_2\,du_3 .} We exchange the three $u_i$ integrals here with the theta integral from \eqref{U+series}.  Abbreviate $m_1m_2m_3/c =M$ and artificially multiply by $e(-M)e(M)$, bringing the latter of these two inside the $\theta$ integral.  The resulting integral to be evaluated is \est{ \int_0^\pi \sin((\kappa-1)\theta) (\sin^2 \theta)^{u_1+u_2+u_3} w_1(M^2/\sin^4 \theta)e(-M \cot^2 \theta)\,d\theta,} to which we apply Lemma \ref{statphase2} with $s_u(\theta) = \sin((\kappa-1)\theta) (\sin^2 \theta)^{u_1+u_2+u_3},$ and $w=w_1$.  By Lemma \ref{statphase2} the resulting function of $M$ is bounded by $\ll \min(M^{1+\real(u)},M^{-1/2}),$ and has controlled derivatives.  Therefore we may take its Mellin transform to obtain a term of the form $T_1$ in the statement of Proposition \ref{Oscillatory}.  

More generally, for the higher terms in the Taylor series \eqref{U+series} one has \est{ g^{(2n_1,n_{23},n_{23})}(0) =\frac{w_{n_1,n_{23}}^*((m_1m_2m_3/c_\theta)^2)}{\sqrt{c_\theta m_1m_2m_3}} \left( \frac{c_\theta}{m_1m_2m_3}\right)^{n_1+n_{23}} \frac{1}{(2\pi i )^3} \iiint_{(1/16)} \zeta_q(1+u_2+u_3) \\ \times \frac{U_{n_1,n_{23}}(u_1,u_2,u_3) }{(u_1-\alpha_1)(u_2-\alpha_2)(u_3-\alpha_3)}\left(\frac{m_2m_3}{q\sin^2 \theta}\right)^{-u_1}\left(\frac{m_1m_3}{q\sin^2 \theta}\right)^{-u_2}\left(\frac{m_1m_2}{q\sin^2 \theta}\right)^{-u_3}\,du,} for some holomorphic functions $U_{n_1,n_{23}}$ which decay rapidly in the vertical strips $\real(u_i)>-\kappa/2$, and $w_{n_1,n_{23}}^*$ are some bounded $C^{\infty}$ functions which are identically zero when their arguments are $<1/2$.  Therefore we may apply Lemma \ref{statphase2} to amplitude functions of the form \est{s_u (\theta) = (\sin^2 \theta)^{n_1+n_{23} + u} \sin((\kappa-1)\theta).}  In this fashion we evaluate each term of the series \eqref{U+series} for $\check{U}^+_\alpha$ separately and add all of the finitely many terms of the form $T_1$ together to conclude that the second term of \eqref{decomposition1} in the case all $m_i>0$ is equal to $T_1$ plus a small error term.  The treatment of the $\check{U}^-_\alpha$ term when all $m_i <0$ is very similar. This concludes our use of the stationary phase method.

We still need to give the Mellin transforms for several remaining cases: the term of \eqref{decomposition1} containing $\check{U}^+_\alpha$ when all $m_i<0$, the corresponding $\check{U}^-_\alpha$ term when all $m_i>0$, and the term $\check{R}_\alpha$. We compute these transforms directly and all of these terms are of the form $T_2$.

We start with the term of \eqref{decomposition1} containing $\check{U}^+_\alpha$ and the assumption that all $m_i<0$. Let $\widetilde{w_1}(v)$ denote the Mellin transform of $w_1$, which is convergent and rapidly decaying in vertical strips whenever $\real(v)<0$.  We have then that for any choice of $\real(v)=\ell<0$ the Mellin transform of $w_1(x)e(2\sqrt{x})$ is given by \est{  \int_0^\infty w_1(x) e(2 \sqrt{x}) x^{s}\,\frac{dx}{x} = \frac{1}{2\pi i }\int_{(\ell)} \frac{ 4^{-(s-v)} \Gamma(2(s-v))}{(-2\pi i )^{2(s-v)} } \widetilde{w_1}(v)\,dv \eqdef \frac{1}{(-2\pi i )^{2s}} u(s).}  The Mellin transform defines a holomorphic function $u(s)$ for any $\real(s)<1/2$ that is rapidly decaying in vertical strips.  A change of variables $s\rightarrow 1/2-s$ and \eqref{Ve} show that when all $m_i<0$ we have \est{\check{U}^+_\alpha  = \frac{(-4\pi i )^{-1}}{c_\theta} \frac{1}{2\pi i } \int_{(3/4)} \frac{u(1/2-s)}{(-2\pi i )^{-2s} } \left(\frac{c_\theta}{|m_1m_2m_3|}\right)^s\frac{1}{(2\pi i )^3} \iiint_{(1/16)} \prod_{i=1}^3 \frac{\Gamma(\kappa/2+u_i)}{(u_i-\alpha_i)} \\ \times\left(\frac{|m_i|q}{2\pi c}\right)^{u_i}  \frac{\Gamma(s-u_i)}{(-2\pi i )^{s-u_i}}\zeta_q(1+u_2+u_3)\,du_1\,du_2\,du_3\,ds.} The parts of the above integrand which depend on $s$ are \est{\frac{u(1/2-s)}{(-2\pi i)^{-2s} } \prod_{i=1}^3 \frac{\Gamma(s-u_i)}{(-2\pi i )^{s-u_i}} = \frac{u(1/2-s) \Gamma(s-u_1)\Gamma(s-u_2)\Gamma(s-u_3)}{(-2\pi i )^s}. } We make a change of variables $s\rightarrow s+u_1+u_2+u_3$ and collect the resulting function $U_3(s,u_1,u_2,u_3)$ which is holomorphic and rapidly decaying in any vertical strips satisfying \est{ \real(s)> & \max_{i\neq j}(-\real(u_i+u_j),-\real(u_1+u_2+u_3)) \\ \real(u_i)>& -\kappa/2.}  We have that \est{\check{U}^+_\alpha = \frac{1}{c} \frac{1}{(2\pi i)^4} \int_{(3/4)}\iiint_{(1/16)} \frac{U_3(s,u_1,u_2,u_3)\zeta_q(1+u_2+u_3)}{(u_1-\alpha_1)(u_2-\alpha_2)(u_3-\alpha_3)} (\sin^2 \theta)^{s+u_1+u_2+u_3-1}  \\ \times \left(\frac{c}{|m_1m_2m_3|}\right)^s \left(\frac{|m_2m_3|}{q}\right)^{-u_1}\left(\frac{|m_1m_3|}{q}\right)^{-u_2}\left(\frac{|m_1m_2|}{q}\right)^{-u_3}\,du\,ds.}  Finally, we re-introduce the integral over $\theta$.  By e.g. \cite{GR7} formulas 3.631.1 and 8.384.1 we have when $\real(z)>0$ that \est{2\chi(-1) \int_0^\pi e(-(\kappa-1)\theta/2\pi) (\sin^2 \theta)^{z-1/2}\,d\theta = 2\chi(-1) \frac{2^{-2z} \Gamma(2z)}{\Gamma(z+\kappa/2)\Gamma(z-\kappa/2+1)}.}  Setting $z=s+u_1+u_2+u_3$ and \est{U_4 (s,u_1,u_2,u_3) \eqdef   2\frac{2^{-2z} \Gamma(2z)}{\Gamma(z+\kappa/2)\Gamma(z-\kappa/2+1)}U_3 (s,u_1,u_2,u_3),} which is holomorphic and rapidly decaying in the same vertical strips as $U_3$.  The result is that when all $m_i<0$ the term \est{ 2\chi(-1) \int_0^\pi e(-(\kappa-1) \theta/2\pi) (\sin \theta)  \check{U}^+_\alpha(\theta)\,d\theta } is of the form $T_2$ in the statement with $U_4$ in place of $U_2$.  The integral over $\theta$ involving $\check{U}^-_\alpha$ when all $m_i>0$ is treated similarly.

Next we compute the Mellin transform of the term $\check{R}_\alpha$ from the decomposition \eqref{decomposition1}.  Recall that the function $R(x)$ defined at the beginning of the proof of Proposition \ref{Oscillatory} is identically equal to $J(x)$ when $x<\sqrt{2}$, satisfies the bound $x^n R^{(n)}(x) \ll_n 1$ for all $n\in \N$, and is $\ll_\kappa x^{-2}$ for large $x$.  Let $\widetilde{R}$ denote the Mellin transform of $R$.  Then with a change of variables we have the inversion formula \es{\label{Rcheck} R(2\sqrt{cx_1x_2x_3}) = \frac{\chi(-1)}{2} \frac{1}{2\pi i} \int_{(1/2)} 4^s \widetilde{R}(2-2s) (cx_1x_2x_3)^{s-1}\,ds. } the integrand of \eqref{Rcheck} is rapidly decaying on any vertical strip $0< \real(s) < \kappa/2$, with a simple pole at $\kappa/2$.  Then inserting formula \eqref{Rcheck} in the definition of $\check{R}_\alpha$ and evaluating the $x$ integrals with \eqref{Ve} we have that \est{\check{R}_\alpha = \frac{\chi(-1)}{2c} \frac{1}{2\pi i } \int_{(3/4)} 4^s \widetilde{R}(2-2s)\left(\frac{c}{|m_1m_2m_3|}\right)^s \frac{1}{(2\pi i)^3} \iiint_{(1/16)} \prod_{i=1}^3 \frac{\Gamma(\kappa/2 + u_i)}{(u_i-\alpha_i)} \left(\frac{|m_i|q}{2\pi c}\right)^{u_i}\\ \times \frac{\Gamma(s-u_i)}{( 2 \pi i  \sgn m_i )^{s-u_i}}\zeta_q(1+u_2+u_3)\,du_1\,du_2\,du_3\,ds.} Change $s\rightarrow s+u_1+u_2+u_3$ and set \est{U_5(s,u_1,u_2,u_3)\eqdef \frac{1}{2} 4^{s+u_1+u_2+u_3} \widetilde{R}(2-2(s+u_1+u_2+u_3))\prod_{i=1}^3 \Gamma(\kappa/2 + u_i) \left(2\pi \right)^{-u_i}\\ \times \frac{\Gamma(s+u_1+u_2+u_3-u_i)}{( 2 \pi i \sgn m_i )^{s+u_1+u_2+u_3-u_i}}, } which is which is holomorphic and rapidly decaying in the vertical strips allowed by \eqref{regionvalid}.  Then we have that $\check{R}_\alpha$ is of the form $T_2$ with $U=U_5$. Setting $U_2=U_4+U_5$ if all $m_i$ are of the same sign, this concludes the proof of Proposition \ref{Oscillatory} if all $m_i>0$ or all $m_i<0$.  

Finally we compute the Mellin Transform of $\check{W}_\alpha$ when the $m_i$ are of mixed signs.  Suppose without loss of generality that $m_1<0$ and $m_2,m_3>0$; the other 5 mixed-sign cases are treated similarly.  Recall the formulas \eqref{MellinJ} and \eqref{Ve} from section \ref{Diagonal}. 
Putting these together we find \est{\check{W}_\alpha = \frac{\chi(-1)}{c} \int_{(3/4)} (2\pi)^{2s} \frac{\Gamma(\kappa/2-s)}{\Gamma(\kappa/2 +s) } c^s \frac{1}{(2\pi i)^3 } \iiint_{(1/16)} \prod_{i=1}^3 \frac{\Gamma(\kappa/2 + u_i)}{(u_i-\alpha_i)} \left(\frac{q}{2\pi c}\right)^{u_i} \frac{\Gamma(s-u_i)}{(2 \pi i m_i)^{s-u_i}} \\ \times \zeta_q(1+u_2+u_3)\,du_1\,du_2\,du_3\,ds.}  Observe that \est{\prod_{i=1}^3\frac{\Gamma(s-u_i)}{(2 \pi i m_i)^{s-u_i}} = \frac{|m_1|^{u_1}|m_2|^{u_2}|m_3|^{u_3}}{|m_1m_2m_3|^s}\frac{\Gamma(s-u_1)}{(2 \pi)^s (-2 \pi i )^{-u_1}}\frac{\Gamma(s-u_2)}{(2\pi )^s(2 \pi i )^{-u_2}}\frac{\Gamma(s-u_3)}{(2 \pi i )^{s-u_3}}} is a rapidly decaying function of $s$ in vertical strips uniformly in $u_i$.  We set \est{U_6(s,u_1,u_2,u_3) \eqdef \frac{\Gamma(\kappa/2-s)}{\Gamma(\kappa/2 +s) }  \prod_{i=1}^3 \Gamma(\kappa/2 + u_i) (2 \pi )^{-u_i} \frac{\Gamma(s-u_1)}{ (-2 \pi i )^{-u_1}}\frac{\Gamma(s-u_2)}{(2 \pi i )^{-u_2}}\frac{\Gamma(s-u_3)}{(2 \pi i )^{s-u_3}},} and after a 
change of variables $s\rightarrow s+u_1+u_2+u_3$ set \est{U_7(s,u_1,u_2,u_3)\eqdef U_6(s+u_1+u_2+u_3,u_1,u_2,u_3).}  The function $U_7$ is holomorphic and rapidly decaying in any vertical strips \eqref{regionvalid}.  We get when $m_i$ are of mixed signs that $\check{W}_\alpha$ is of the form $T_2$ with $U=U_7$ if $m_1<0$ and $m_2,m_3>0$.  The other mixed-sign cases have similar formulas. \end{proof}

\section{A Large Sieve Inequality for $L$-functions}\label{Lsieve}

\begin{lemma}[Large Sieve for $L$-functions]\label{sieve}
We have for $q$ odd square-free and $t_1, t_2$ with $|t_1|,|t_2| \ll q^{100}$ and $|\imag(t_1,t_2) |\leq 1/\log q$ that \es{\label{lem}\frac{1}{\phi(q)} \sum_{\psi \pmod q} \left| L(1/2+it_1,\psi)L(1/2+it_2,\psi)\right|^2  \ll  (\log q)^2 \left(\log  (\omega(q)+1)\right)^2 \\  \times \begin{cases} (\log q)^2 & \text{ if } |t_1-t_2|\leq 1/\log q \\ |\zeta(1+i|t_1-t_2|)|^2 & \text{ if }  1/\log q < |t_1-t_2| . \end{cases}  }  
\end{lemma}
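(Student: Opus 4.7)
The approach is to apply an approximate functional equation to each $L$-factor, reduce to a Dirichlet polynomial mean-square via character orthogonality, and evaluate the resulting arithmetic sum by Mellin analysis of a shifted-divisor Dirichlet series.  Specifically, by the standard smoothed approximate functional equation (cf.\ Iwaniec-Kowalski \cite{IK}~\S5.2), for each $\psi \pmod q$ and $j\in\{1,2\}$,
\est{L(\half+it_j,\psi) = A_{t_j}(\psi) + \eta_j(\psi)\, B_{t_j}(\psi),}
with $A_t(\psi) = \sum_n \psi(n)\,n^{-1/2-it} V(n/\sqrt q)$, $B_t$ the corresponding dual sum in $\bar\psi$, $V$ a fixed smooth Schwartz weight, and $|\eta_j(\psi)|=1$.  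The product $L(\half+it_1,\psi)L(\half+it_2,\psi)$ then expands into four Dirichlet polynomials of the shape $\sum_N a_N \tilde\psi(N)/\sqrt N$, with $\tilde\psi\in\{\psi,\bar\psi\}$ and $a_N$ a shifted-divisor coefficient effectively supported on $N\ll q^{1+\eps}$.  By the triangle inequality it suffices to bound $\frac{1}{\phi(q)}\sum_\psi |\sum_N a_N\tilde\psi(N)/\sqrt N|^2$ for each such piece separately; all four cases are analogous.

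For a fixed choice of signs, orthogonality of Dirichlet characters modulo $q$ rewrites the average as
\est{\frac{1}{\phi(q)}\sum_{\psi\pmod q}\left|\sum_N \frac{a_N\tilde\psi(N)}{\sqrt N}\right|^2 = \sum_{\substack{M \equiv N \pmod q \\ (MN,q)=1}} \frac{a_M\bar a_N}{\sqrt{MN}}.}
The Schwartz decay of $V$ confines $M,N\ll q^{1+\eps}$, within which the off-diagonal contribution $M\neq N$ is $O(q^{-A})$ for any $A$ by standard smoothing arguments on the shifts $a_N - a_{N+kq}$.  The remaining diagonal is
\est{\Sigma \eqdef \sum_{(N,q)=1}\frac{|a_N|^2}{N},}
and by opening the smoothing in $V$ via Mellin inversion, $\Sigma$ is evaluated using the shifted divisor Dirichlet series
\est{\sum_{(N,q)=1}\frac{|\sigma_{-it_1,-it_2}(N)|^2}{N^s} = \frac{\zeta(s)^2\,\zeta(s+iT)\,\zeta(s-iT)}{\zeta(2s)}\,E_q(s,T),}
where $T\eqdef t_1-t_2$ and $E_q(s,T)$ is a finite Euler product over the primes dividing $q$.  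Shifting the Mellin contour past $s=1$ collects a double pole from $\zeta(s)^2$ together with simple poles at $s=1\pm iT$: when $|T|\leq 1/\log q$ all four pole contributions lie within $1/\log q$ of $s=1$ and combine into a bound of size $(\log q)^4$, whereas when $|T|>1/\log q$ they separate and the dominant contribution becomes $(\log q)^2\,|\zeta(1+iT)|^2$.

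Finally, $|E_q(1,T)|$ is bounded uniformly in $T$ by $\prod_{p\mid q}(1-1/p)^{-O(1)}$, and the elementary Mertens-type estimate $\prod_{p\mid q}(1-1/p)^{-1}\ll \log(\omega(q)+1)$ (saturated when $q$ is a primorial) yields the factor $(\log(\omega(q)+1))^2$ in the stated bound.  The principal technical obstacle I anticipate is the uniform treatment of the contour shift across the boundary $|T|\approx 1/\log q$: as the simple poles at $s=1\pm iT$ approach the double pole at $s=1$, one must carefully track the Laurent expansions of $\zeta(s\pm iT)$ to extract the correct powers of $\log q$ in each regime and verify continuity of the bound at the transition.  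A secondary technicality is checking that the off-diagonal shifts $M=N+kq$ are indeed $O(q^{-A})$, which is routine given the Schwartz decay of $V$.
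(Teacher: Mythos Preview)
Your overall strategy---approximate functional equation, reduce to a mean square of Dirichlet polynomials over characters, and evaluate the diagonal via the Dirichlet series for $|\sigma_{t_1,t_2}|^2$---is the same as the paper's. Two points of execution need correction, however.

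\textbf{The off-diagonal is not $O(q^{-A})$.} After the approximate functional equation, your Dirichlet polynomials have effective length $\asymp q$, the \emph{same} size as the modulus. Orthogonality produces terms with $M\equiv N\pmod q$, $M\neq N$; taking $N\asymp q$ and $M=N+q$ gives pairs for which the weight $a_M\bar a_N/\sqrt{MN}$ is of size $d(M)d(N)/q$, and summing over $N\le Cq$ contributes something like $(\log q)^{O(1)}$, not $q^{-A}$. Smoothing cannot help: the weight $V$ has no reason to vanish near $N/q\approx 1$. The paper avoids this entirely by using the multiplicative large sieve inequality
\[
\sideset{}{^*}\sum_{\psi\pmod{q^*}}\Big|\sum_{n\asymp N}a_n\psi(n)\Big|^2 \ll (q^*+N)\sum_{n\asymp N}|a_n|^2
\]
rather than exact orthogonality. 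This packages the off-diagonal into the factor $(q^*+N)$ via positivity, and with $N\asymp q^*$ yields the diagonal bound directly. If you insist on orthogonality you must bound the shifted convolution $\sum_{N\le Cq} d(N)d(N+kq)/\sqrt{N(N+kq)}$ by hand, which is doable but is real work, not ``standard smoothing''.

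\textbf{Cross terms in the AFE.} Expanding each $L$-factor separately, the products $A_{t_1}(\psi)B_{t_2}(\psi)$ involve $\psi(m)\bar\psi(n)$ and are \emph{not} of the form $\sum_N a_N\tilde\psi(N)$ for a single $\tilde\psi$. The paper circumvents this by writing an AFE for the product $L(\tfrac12+it_1,\psi)L(\tfrac12+it_2,\psi)$ directly, producing only two sums with coefficients $\sigma_{t_1,t_2}(n)$, each genuinely of the required shape. This also streamlines the Dirichlet-series computation you describe, since $\sigma_{t_1,t_2}$ appears naturally rather than being reconstructed.

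A final remark on the $(\log(\omega(q)+1))^2$ factor: in the paper it arises not from the Euler product $E_q$ at $s=1$ but from first reducing to primitive characters of each conductor $q^*\mid q$, applying the large sieve at level $q^*$, and then summing $\sum_{q^*\mid q}(q^*)^{-1}\prod_{p\mid q^*}(1+O(p^{-1/2}))$ via Mertens. Your route through $E_q$ gives a factor of the same flavor but you should check it produces the exponent $2$ and not something larger.
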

\begin{proof}
Note that we have stated the lemma as a sum over all characters modulo $q$, both primitive and imprimitive. If $\psi$ is not primitive, let $q^*$ denote the conductor of $q$ and let $\psi^*$ denote the primitive character mod $q^*$ which induces $\psi$.  Then for an non-primitive character $\psi$ we have \est{L(s,\psi) = \prod_{p \mid q/q^*} \left(1- \psi^*(p) p^{-s}\right) L(s,\psi^*).}  Let $\psum$ and $\msum$ denote sums over primitive even (resp. odd) characters.  One has by orthogonality of characters and positivity that \es{\label{primitivesieve} \psum_{\psi \pmod q} \left| \sum_{n \asymp N} a_n \psi(n) \right|^2 \ll \left(q+N\right)\sum_{n \asymp N} |a_n|^2,} where $n\asymp N$ denotes a sum over $n$ whose length is $\ll N$ and $\gg N$, and similarly for $\msum$.  We can split the moment in the statement of the Lemma over primitive characters of a given parity to find that the left hand side of \eqref{lem} is bounded by  \es{\label{ref}\leq \frac{1}{\phi(q)} \sum_{q^* \mid q}\prod_{p \mid q/q^*} \left(1+ \frac{1/e}{\sqrt{p}}\right)^4 \left(\psum_{\psi^* \pmod {q^*}} + \msum_{\psi^* \pmod {q^*}}\right) \left| L(1/2+it_1,\psi^*)L(1/2+it_2,\psi^*)\right|^2.}  Next we write an approximate functional equation for the product of two $L$-functions.  Let $\sigma_{t_1,t_2}(n) = \sum_{n=n_1n_2} n_1^{-it_1}n_2^{-it_2}$.  Following say, Theorem 5.3 of \cite{IK} we have that \est{L(1/2+it_1,\psi^*)L(1/2+it_2,\psi^*) = \sum_{n\geq 1} \frac{\sigma_{t_1,t_2}(n)\psi^*(n)}{n^{1/2} } F_{t_1,t_2}(n/q^*) \\ + \eps(\psi,t_1,t_2) \sum_{n\geq 1}  \frac{\sigma_{-t_1,-t_2}(n)\overline{\psi^*}(n)}{n^{1/2} } F_{-t_1,-t_2}(n/q^*),} where $|\eps(\psi,t_1,t_2)|=1$ and $F_{t_1,t_2}(x)$ is a $C^\infty$ and rapidly decaying function of $x$ for fixed $t_1,t_2$ and bounded in vertical strips as a function of $t_1$ and $t_2$. By Cauchy-Schwarz we have \est{ \left|L(1/2+it_1,\psi^*)L(1/2+it_2,\psi^*)\right|^2 \\ \leq 2 \left|\sum_{n\geq 1} \frac{\sigma_{t_1,t_2}(n)\psi^*(n)}{n^{1/2} } F_{t_1,t_2}(n/q^*)\right|^2 + 2\left|\sum_{n\geq 1}  \frac{\sigma_{-t_1,-t_2}(n)\overline{\psi^*}(n)}{n^{1/2} } F_{-t_1,-t_2}(n/q^*)\right|^2.}  
Inserting this to \eqref{ref} we find the left hand side of \eqref{lem} is bound by a sum of four terms, one of which is \es{\label{ref2}  \frac{2}{\phi(q)} \sum_{q^* \mid q}\prod_{p \mid q/q^*} \left(1+ \frac{1/e}{\sqrt{p}}\right)^4 \psum_{\psi^* \pmod q^*}\left|\sum_{n\geq 1} \frac{\sigma_{t_1,t_2}(n)\psi^*(n)}{n^{1/2} } F_{t_1,t_2}(n/q^*)\right|^2,} and the other three are of a similar form.  By the large sieve \eqref{primitivesieve}, the upper bound \eqref{ref2} is \est{\ll  \frac{1}{\phi(q)} \sum_{q^* \mid q}q^*\prod_{p \mid q/q^*} \left(1+ \frac{1/e}{\sqrt{p}}\right)^4 \sum_{n\geq 1} \frac{|\sigma_{t_1,t_2}(n)|^2}{n}\left| F_{t_1,t_2}(n/q^*) \right|^2.}  We evaluate this last sum by the Dirichlet series and Mellin inversion technique.  We have \est{\sum_{n\geq 1} \frac{|\sigma_{t_1,t_2}(n)|^2}{n^s} = \zeta(s+it_1-i\overline{t_1})\zeta(s-i\overline{t_1} + it_2)\zeta(s+it_1-i\overline{t_2})\zeta(s+it_2-i\overline{t_2}),}  
so that \est{\sum_{n\geq 1} \frac{|\sigma_{t_1,t_2}(n)|^2}{n}\left| F_{t_1,t_2}(n/q^*) \right|^2 \ll q^* (\log q^*)^2 \begin{cases} (\log q^*)^2 & \text{ if } |t_1-t_2|\leq 1/\log q^* \\ |\zeta(1+i|t_1-t_2|)|^2 & \text{ if }  1/\log q^* < |t_1-t_2| . \end{cases}} Putting all of these pieces together, the left hand side of \eqref{lem} is \est{ \ll  \left(\sum_{q^* \mid q}{q^*}^{-1}\prod_{p \mid q^*} \left(1+ \frac{1/e}{\sqrt{p}}\right)^4 \right) \frac{q}{\phi(q)}  (\log q)^2 \begin{cases} (\log q)^2 & \text{ if } |t_1-t_2|\leq 1/\log q \\ |\zeta(1+i|t_1-t_2|)|^2 & \text{ if }  1/\log q < |t_1-t_2| . \end{cases}}  The first factor in parentheses is seen by Merten's theorem to be $\ll \log (\omega(q)+1)$, hence we conclude the Lemma.
\end{proof}

\section{The Error Terms: Dual Moment}\label{Dual}

In this section we assemble the remaining off-diagonal sum \est{\sum_{c \equiv 0 \pmod q} \sum_{m_1m_2m_3 \neq 0} c^{-2}G(m_1,m_2,m_3,c) \check{W}_\alpha(m_1,m_2,m_3,c)} into an average of Dirichlet $L$-functions of conductors dividing $q$ on their critical lines, as per the ``Motohashi-type formula'' from the introduction.  We assume $\kappa\geq 4$ throughout section \ref{Dual} and give details on the case $\kappa =2$ in section \ref{two}.

The calculations below are quite intricate and involve large expressions because they are valid for all odd square-free $q$.  The calculations become much cleaner fashion if one takes $q$ to be prime and we suggest this assumption on a first reading.  
\begin{proof}[Proof of Main Theorems]
We computed the analytic component $\check{W}_\alpha$ in Proposition \ref{Oscillatory}.  The evaluation of the arithmetic component $G$ by Conrey and Iwaniec is sufficient for our purposes.  We recall their Lemma 10.2 as Lemma \ref{Gcalc} below. Let $R_q(n) = S(0, n,q)$ be the Ramanujan sum, and define $H(w;q)$ to be the character sum \est{ H(w;q) \eqdef \sum_{u,v \pmod q} \chi(uv(u+1)(v+1)) e_q((uv-1)w).}
\begin{lemma}[Conrey-Iwaniec]\label{Gcalc}
Let $c=qr$ with $q$ square-free.  Suppose $m_1,m_2,m_3$ are integers with \es{\label{coprimeconditions}(m_1,r)=1\,\,\,(m_2m_3,q,r)=1.}  Then the character sum $G(m_1,m_2,m_3,c)$ satisfies \est{G= e_c(m_1m_2m_3)\chi_{k\ell}(-1) \frac{r^2qh}{\phi(k)} R_k(m_1)R_k(m_2)R_k(m_3) H(\overline{rhk}m_1m_2m_3;\ell)} where $h=(r,q), k=(m_1m_2m_3,q)$ and $\ell = q/hk$.  If the co-primality conditions \eqref{coprimeconditions} are not satisfied then $G$ vanishes.
\end{lemma}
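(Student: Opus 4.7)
The plan is to prove Lemma~\ref{Gcalc} by a Chinese Remainder Theorem decomposition of the modulus $c = qr$, reducing the arithmetic sum $G$ to a product of local character sums, one for each prime $p \mid c$. Since $\chi$ has conductor $q$, the local factor at $p$ has a substantially different shape depending on whether $p$ divides $q$, whether $p$ divides $r$, and (when $p \mid q$) whether $p$ divides $m_1 m_2 m_3$. The coprimality conditions~\eqref{coprimeconditions} together ensure that the divisors $h = (r,q)$, $k = (m_1m_2m_3, q)$ and $\ell = q/hk$ are pairwise coprime, so the factorization is unambiguous.

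First I would open the Kloosterman sum as $S(a_1, a_2 a_3, c) = \sum_{b \in (\Z/c\Z)^*} e_c(a_1 b + \bar b \, a_2 a_3)$ and apply CRT to all four variables $a_1, a_2, a_3, b$, obtaining a product of local sums over each prime power $p^\alpha \| c$. At primes $p \mid r$ with $p \nmid q$, the local $\chi$ is trivial and the local sum is a pure exponential sum; summing over the $a_i$ via $\sum_{a \pmod{p^\alpha}} e_{p^\alpha}(ay) = p^\alpha \cdot \mathds{1}_{p^\alpha \mid y}$ forces relations between $b$ and the $m_i$, producing the phase $e_c(m_1 m_2 m_3)$ after CRT reassembly and the factor $r^2$ from the two surviving $a$-sums once $b$ is pinned down. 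The condition $(m_1, r) = 1$ is needed so that the forced value of $b$ is a unit modulo $p^\alpha$; failure of $(m_1, r) = 1$ makes the local sum vanish, accounting for the vanishing statement at the end of the lemma.

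At primes $p \mid q$ with $p \nmid r$, so $p$ appears to the first power in $c$, the character $\chi$ is a quadratic character mod $p$ and two subcases arise. If $p \mid m_1 m_2 m_3$ (the primes of $k$), orthogonality of $\chi$ collapses the relevant $a_i$-sums to a Ramanujan-sum contribution, producing $R_k(m_1) R_k(m_2) R_k(m_3)$ together with the $1/\phi(k)$ normalization. If $p \nmid m_1 m_2 m_3$ (the primes of $\ell$), the Gauss-sum identity $\sum_a \chi(a) e_p(a u) = \chi(u) g(\chi)$ (symmetric because $\chi = \bar\chi$) lets one evaluate the $a_1$-sum; substitutions $a_i \mapsto \bar b \, a_i$ then disentangle $b$ from the remaining variables, and after setting $u, v$ to be the shifted $a_2, a_3$, the remaining double sum matches the definition of $H(w; \ell)$ with $w \equiv \overline{rhk} \, m_1 m_2 m_3 \pmod{\ell}$. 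The accumulated Gauss sums combine using the local identity $g(\chi)^2 = \chi(-1)\, p$ at each $p \mid \ell$ to produce the global factor $q$ and the sign $\chi_{k\ell}(-1)$. Finally, at primes $p \mid h$ (dividing both $q$ and $r$), the local factor is a hybrid but simplifies since $(k, h) = 1$, contributing the remaining factor $h$ in the formula.

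The main obstacle is the combinatorial bookkeeping: many substitutions, inverse residues, Gauss sums, and CRT reassemblies must be tracked so that the final factors $\chi_{k\ell}(-1)$, $r^2 q h/\phi(k)$, $R_k(m_i)$ and $H(\overline{rhk} m_1 m_2 m_3; \ell)$ appear with exactly the correct powers and arguments. The odd, square-free assumption on $q$ plays a crucial role: oddness gives clean quadratic Gauss-sum formulas, while square-freeness ensures each prime of $q$ appears only once in $q$, so the local quadratic character sums never need to be analyzed at higher prime powers. The second vanishing case, $(m_2 m_3, q, r) > 1$, similarly follows from inconsistency in the local analysis at primes of $h$.
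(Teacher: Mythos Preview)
The paper does not prove this lemma at all: it is quoted verbatim as Lemma~10.2 of Conrey--Iwaniec \cite{ConreyIwaniec}, with the introductory sentence ``We recall their Lemma~10.2 as Lemma~\ref{Gcalc} below.'' So there is no proof in the paper to compare your attempt against.

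That said, your outline is essentially the Conrey--Iwaniec argument. They too open the Kloosterman sum, apply CRT to factor $G$ over the primes of $c=qr$, and sort the primes of $q$ into the three classes $h,k,\ell$ according to whether they divide $r$, divide $m_1m_2m_3$, or neither. Your handling of the $\ell$-primes via the Gauss-sum identity and the substitution leading to $H$ matches their computation. A few places in your sketch are thinner than the actual proof: the treatment of primes $p\mid h$ (where $p\mid q$ and $p\mid r$, so $p^{\alpha+1}\| c$ with $\alpha\geq 1$) is more delicate than ``a hybrid but simplifies,'' and the global phase $e_c(m_1m_2m_3)$ is not produced solely by the $r$-primes but reassembled from local phases at all primes after CRT. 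These are bookkeeping gaps rather than conceptual errors, and you have correctly identified the main obstacle as combinatorial tracking. If you want to flesh this out, the cleanest reference is Sections~10--11 of \cite{ConreyIwaniec}.
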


Following Conrey and Iwaniec section 11 we expand the character sum $H$ in multiplicative characters.  The formulas (11.7), (11.9) and (11.10) of Conrey and Iwaniec together give \es{\label{Hdecomp}H(w;q) = \sum_{q_1q_2=q} \frac{\mu(q_1)\chi_{q_1}(-1)}{\phi(q_2)} \sum_{\psi \pmod {q_2}} \tau(\overline{\psi})g(\chi,\psi)\psi(\overline{q_1} w),} where $g(\chi,\psi)$ is the hybrid character sum in the statement of Theorem \ref{Thm2}. 
\begin{lemma}\label{Deligne}
For all Dirichlet characters $\psi$ of odd square-free modulus $q$ we have that \est{g(\chi,\psi) \leq q C^{\nu(q)},} where $\nu(q)$ is the number of prime factors of $q$.  
\end{lemma}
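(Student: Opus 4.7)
The plan is to reduce Lemma \ref{Deligne} to the prime-modulus bound $|g(\chi,\psi)/q|\leq C$ already stated in Theorem \ref{Thm2}, using the Chinese Remainder Theorem to factorize the sum multiplicatively in $q$.

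First, since $q$ is odd and square-free, the CRT isomorphism $\Z/q\Z \cong \prod_{p\mid q} \mathbb{F}_p$ induces factorizations $\chi = \prod_{p\mid q}\chi_p$ and $\psi = \prod_{p\mid q}\psi_p$, where each $\chi_p$ is the Legendre symbol modulo $p$ (because $\chi$ is the quadratic character of conductor $q$) and each $\psi_p$ is some Dirichlet character modulo $p$. Both defining polynomials $uv(u+1)(v+1)$ and $uv-1$ depend only on the residues of $u$ and $v$ modulo each prime, so under the induced bijection $(\Z/q\Z)^2\cong\prod_{p\mid q}\mathbb{F}_p^2$ the sum factorizes as
\est{g(\chi,\psi) = \prod_{p \mid q} g_p(\chi_p,\psi_p), \qquad g_p(\chi_p,\psi_p) \eqdef \sum_{u,v\pmod p} \chi_p(uv(u+1)(v+1))\,\psi_p(uv-1).}

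Second, I bound each local factor $g_p$ by $Cp$ uniformly. When $\psi_p$ is nontrivial (equivalently, primitive of conductor $p$), this is precisely the prime-modulus bound stated in Theorem \ref{Thm2}, which is the genuinely deep input (an instance of Deligne's bound for complete two-variable exponential sums). When $\psi_p$ is the trivial character modulo $p$, one has $\psi_p(uv-1)=\mathds{1}_{uv\not\equiv 1\,(p)}$, and a short direct calculation separating out the subvariety $uv\equiv 1$ reduces $g_p$ to an expression in one-variable Jacobsthal-type sums $\sum_u \chi_p(u(u+1))=-1$. This shows $|g_p(\chi_p,1)|=O(1)$, which is in particular $\le Cp$.

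Taking the product over primes $p\mid q$ then yields
\est{|g(\chi,\psi)| \le \prod_{p\mid q} Cp = C^{\nu(q)} q,}
which is the claimed bound. The real content is the prime-case bound from Theorem \ref{Thm2}; once that is accepted, the CRT step and the trivial-character verification are mechanical, so I do not expect any genuine obstacle in this lemma itself.
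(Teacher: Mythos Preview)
Your proposal is correct and follows essentially the same approach as the paper: the paper's proof simply cites Conrey--Iwaniec for the prime case $|g(\chi,\psi)|\le Cq$ and then invokes multiplicativity of the character sum in the modulus to conclude $|g(\chi,\psi)|\le C^{\nu(q)}q$. You have spelled out the CRT factorization and the trivial-$\psi_p$ case more explicitly than the paper does, but the argument is the same.
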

\begin{proof} Conrey and Iwaniec prove that if $q$ is a prime that $g(\chi,\psi) \leq C q$ for an absolute constant $C$.  The character sum is multiplicative in the modulus, hence $g(\chi,\psi)\leq C^{\nu(q)}q.$  One could give an explicit value to $C$ using the exponential sum techniques of Katz.  \end{proof}

The expressions for $\check{W}_\alpha$ and $G$ have symmetry under $m_i\rightarrow -m_i$, so we now assume that all $m_i>0$.  The other 7 octants of $m_i$'s are treated separately and in the same way and then we add all contributions together at the end.  Of the three terms in the evaluation of $\check{W}_\alpha$ given by Proposition \ref{Oscillatory}, the contribution of the error term is negligible and we ignore it.  Terms $T_1$ and $T_2$ of Proposition \ref{Oscillatory} are similar, but $T_2$ is more difficult and contains the issue of small weights, so we only give full details for $T_2= T_2(m_1,m_2,m_3,c)$.  That is to say, we restrict our attention to the sum \es{\label{total}\sum_{r\geq 1} \sum_{\substack{m_i \geq 1\\ i=1,2,3}} \frac{G(m_1,m_2,m_3,rq)T_2(m_1,m_2,m_3,r,q)}{r^2q^2}} for the remainder of this section.  

The first step is to separate the sums over $r$ and $m_1,m_2,m_3$ according to the parameters $h$ and $k$ in Lemma \ref{Gcalc}.  Starting with $r$, we set $r=hr'$ with $(r',q/h)=1$.  We also split the sum over the $m_i$ according to $k=(m_1m_2m_3,q)$, and we understand the condition $(m_1m_2m_3,q)=k$ as $(m_1m_2m_3,k)=k$ and $(m_i,q/k)=$ for each $i=1,2,3$.  We have that \est{\sum_{r\geq 1} \sum_{\substack{m_i\geq 1\\ (m_1,r)=1 \\ (m_2m_3,q,r)=1}}  = \sum_{hk \mid q} \sum_{\substack{r'\geq 1 \\ (r',q/h)=1}} \sum_{\substack{(m_1m_2m_3,k)=k \\ (m_1,r')=1\\ (m_1m_2m_3,qh/k)=1}}.}  For $k$ square-free there is a bijection between the sets \est{\{m_1,m_2,m_3 \text{ such that } (m_1m_2m_3,k)=k\} \longleftrightarrow \{(k_1,n_1),(k_2,n_2),(k_3,n_3)  \text{ such that } \\ k_i \mid k,\, k \mid k_1k_2k_3 \text{ and } (n_i,k/k_i)=1\}} given by $k_i=(m_i,k)$ and $m_i=k_in_i$.  We split the sums further according to this bijection.  \es{\label{hkdecomp} \sum_{r\geq 1} \sum_{\substack{m_i\geq 1\\ (m_1,r)=1 \\ (m_2m_3,q,r)=1}}  = \sum_{hk \mid q} \sum_{\substack{r'\geq 1 \\ (r',q/h)=1}} \sum_{\substack{k_i \mid k \\ k \mid k_1k_2k_3 \\ (k_1,r')=1 \\ (k_1k_2k_3, qh/k)=1}} \sum_{\substack{(n_1,k/k_1)=1 \\ (n_1,qr'h/k)=1}}  \sum_{\substack{(n_2,k/k_2)=1 \\ (n_2,qh/k)=1}}\sum_{\substack{(n_3,k/k_3)=1 \\ (n_3,qh/k)=1}} ,} where $r=hr'$, $m_i=k_in_i$ for $i=1,2,3.$  
We may now combine the formulas for $T_2$ and $G$ with this decomposition to get a large but tractable formula.  

In $T_2$ we shift the lines of integration to $\real(s)=-1/16$ and $\real(u_i) = 5/8$ so that all of the Dirichlet series and Euler product calculations below occur in the region of absolute convergence.

We decompose the exponential $e_c(m_1m_2m_3)$ by treating it as an archimedian character.  Note here if we were dealing instead with $T_1$ of Proposition \ref{Oscillatory} instead of $T_2$ this phase would cancel cleanly with the phase from $T_1$, making the $T_1$ case easier.  As it is, we use the formula \es{\label{archarformula}e\left(\frac{m_1m_2m_3}{c}\right) = 1+ \frac{1}{2\pi i}\int_{(-1/16)} \frac{\Gamma(v)}{(-2\pi i )^v} \left(\frac{m_1m_2m_3}{c}\right)^{-v}\,dv,} introducing an additional variable. 

The two terms in \eqref{archarformula} may be treated separately.  In fact the contribution from the term 1 of \eqref{archarformula} is of the same form as those terms where we take $T_1$ instead of $T_2$.  As this case is very similar and easier we ignore it.   
Let $G'(m_1,m_2,m_3,c)$ be defined as the expression of Lemma \ref{Gcalc} with the second term of \eqref{archarformula} in lieu of $e_c(m_1m_2m_3)$.  Warning: our $G'$ differs from that of Conrey and Iwaniec, but is similar in spirit.  The modified sum \es{\label{79mod}  \sum_{r\geq 1} \sum_{\substack{m_i \geq 1\\ i=1,2,3}} \frac{G'(m_1,m_2,m_3,rq)T_2(m_1,m_2,m_3,r,q)}{r^2q^2}} is then equal to \es{\label{big1prime} \frac{1}{(2\pi i )^5} \int_{(-1/16)}\int_{(-1/16)} \iiint_{(5/8)}  \frac{\Gamma(v)}{(-2\pi i )^v} \frac{U_2(s,u_1,u_2,u_3)\zeta_q(1+u_2+u_3)}{(u_1-\alpha_1)(u_2-\alpha_2)(u_3-\alpha_3)} q^{s+v+u_1+u_2+u_3-2} \\ \times \sum_{hk \mid q} \chi_{h}(-1)\frac{h^{s+v}}{\phi(k)}\sum_{\ell_1\ell_2= \ell} \mu(\ell_1)\chi_{\ell_1}(-1) D(s,v,u_i,h,k,q)\,du_1\,du_2\,du_3\,ds\,dv} where $D=D(s+v,u_i,h,k,q)$ is \es{\label{big2} D \eqdef  \frac{1}{\phi(\ell_2)} \sum_{\psi \pmod {\ell_2}} \tau(\overline{\psi})g(\chi,\psi)\psi(\overline{\ell_1h^2k})  \sum_{\substack{k_i \mid k \\ k \mid k_1k_2k_3 \\ (k_1k_2k_3, qh/k)=1}} \frac{\psi(k_1k_2k_3)}{k_1^{s+v+u_2+u_3} k_2^{s+v+u_1+u_3}k_3^{s+v+u_1+u_2}} \\ \times \sum_{\substack{r'\geq 1 \\ (r',qk_1/h)=1 }} \frac{\psi(\overline{r'})}{r'^{1-(s+v)}} \sum_{\substack{(n_1,k/k_1)=1 \\ (n_1,qr'h/k)=1}} \frac{R_k(k_1n_1)\psi(n_1)}{n_1^{s+v+u_2+u_3}} \sum_{\substack{(n_2,k/k_2)=1 \\ (n_2,qh/k)=1}} \frac{R_k(k_2n_2)\psi(n_2)}{n_2^{s+v+u_1+u_3}} \sum_{\substack{(n_3,k/k_3)=1 \\ (n_3,qh/k)=1}}  \frac{R_k(k_3n_3)\psi(n_3)}{n_3^{s+v+u_1+u_2}} .}   
\begin{lemma}\label{DirichletSeries}
For $\real(w)>1$ and square-free $k$ we have \est{\sum_{\substack{(n_i,k/k_i)=1 \\ (n_1,qh/k)=1}} \frac{R_k(k_in_i)\psi(n_i)}{n_i^{w}} = \mu(k)\mu(k_i)\phi(k_i) L_{\frac{q}{k_i}h }(w,\psi), }where the subscript denotes primes dividing the subscript have been omitted.
\end{lemma}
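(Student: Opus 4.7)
The plan is to reduce the Ramanujan sum $R_k(k_i n_i)$ to a simple product using standard multiplicativity, then recognize the resulting Dirichlet series with the right coprimality conditions as the claimed $L$-function.

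First I would factor $k = k_i \cdot (k/k_i)$. Since $k$ is square-free, $(k_i, k/k_i)=1$, and hence the Ramanujan sum factors multiplicatively:
\est{R_k(k_i n_i) = R_{k_i}(k_i n_i) \cdot R_{k/k_i}(k_i n_i).}
The first factor simplifies because $k_i \mid k_i n_i$: by the usual formula $R_c(m)=\sum_{d\mid(c,m)}d\,\mu(c/d)$ one gets $R_{k_i}(k_i n_i) = R_{k_i}(0) = \phi(k_i)$. For the second factor, note that $(k_i,k/k_i)=1$, so multiplication by $k_i$ permutes the residues coprime to $k/k_i$, giving $R_{k/k_i}(k_i n_i)=R_{k/k_i}(n_i)$. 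Under the summation condition $(n_i,k/k_i)=1$, the standard evaluation $R_c(m)=\mu(c)$ for $(m,c)=1$ yields $R_{k/k_i}(n_i)=\mu(k/k_i)$. Using square-freeness, $\mu(k/k_i) = \mu(k)\mu(k_i)$, so
\est{R_k(k_i n_i) = \mu(k)\mu(k_i)\phi(k_i).}

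Next I would combine the coprimality conditions on $n_i$. We have $(n_i,k/k_i)=1$ and $(n_i,qh/k)=1$; since $q=hk\ell$ with $h,k,\ell$ pairwise coprime square-free, $qh/k=h^2\ell$ carries the same prime set as $h\ell$, so together the two conditions amount to $n_i$ being coprime to $(k/k_i)\cdot h \cdot \ell$, which is precisely the set of primes dividing $qh/k_i$ (equivalently $q/k_i$). Factoring out the constant $\mu(k)\mu(k_i)\phi(k_i)$, the remaining sum is
\est{\sum_{(n_i,\,qh/k_i)=1} \frac{\psi(n_i)}{n_i^w} = L_{qh/k_i}(w,\psi),}
which is absolutely convergent for $\real(w)>1$ and equals the $L$-function $L(w,\psi)$ with the local Euler factors at primes dividing $qh/k_i$ removed. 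Assembling these pieces gives the claimed identity.

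There is no real obstacle here: this is a bookkeeping lemma, and the only delicate point is checking that the two imposed coprimality conditions on $n_i$ combine cleanly into a single coprimality condition matching the subscript of the partial $L$-function on the right-hand side, which is automatic once one notes $q=hk\ell$ and exploits square-freeness.
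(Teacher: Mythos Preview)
Your proof is correct. The paper reaches the same constant by a slightly different factorization: instead of splitting $R_k$ via multiplicativity in the modulus as you do, it notes that for square-free $k$ the function $\mu(k)R_k(n)$ is multiplicative in $n$, introduces an auxiliary $\rho_{k_i}(n)=\prod_{p\mid n,\,p\nmid k_i}\mu(k)R_k(p)$, and writes $R_k(k_in_i)=\mu(k)\mu(k_i)\phi(k_i)\rho_{k_i}(n_i)$ before expanding into an Euler product. Under the summation condition $(n_i,k/k_i)=1$ one has $\rho_{k_i}(n_i)\equiv 1$, so both routes collapse to the same constant $\mu(k)\mu(k_i)\phi(k_i)$; your argument via $R_k=R_{k_i}R_{k/k_i}$ together with $R_{k_i}(k_in_i)=\phi(k_i)$ and $R_{k/k_i}(n_i)=\mu(k/k_i)$ is more direct and avoids the auxiliary function entirely.
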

\begin{proof}
The Ramanujan sum $R_k(n)$ is not multiplicative in $n$ but if $k$ is square-free the function $\mu(k)R_k(n)$ is multiplicative in $n$.  Define \est{\rho_{k_i}(n) \eqdef  \prod_{\substack{p \mid n \\ p \nmid k_i}} \mu(k) R_k(p).} Then we have $R_k(k_i n_i) = \mu(k)\mu(k_i)\phi(k_i) \rho_{k_i}(n_i),$ and expanding into an Euler product establishes the Lemma.   
\end{proof}

Next we evaluate the sum over $r'$ in \eqref{big2} by an Euler product calculation to find \est{\sum_{(r',\frac{q}{h} k_1)=1} \frac{\overline{\psi}(r')}{r'^{1-(s+v)}} L_{\frac{q}{k_1}h r' }(s+v+u_2+u_3,\psi) = \frac{L_{\frac{q}{h}k_1}(1-(s+v),\overline{\psi})}{\zeta_q(1+u_2+u_3)} L_{\frac{q}{k_1}h }(s+v+u_2+u_3,\psi) .}  Note that the zeta function in the denominator here cancels exactly with the zeta function in the numerator of \eqref{big1prime}.  This cancellation represents the resolution of the artificial use of Hecke multiplicativity from section \ref{Initial}.   
Now take the sum over the $k_i$ from \eqref{big2} inside and denote the resulting sum by $K =K(s+v,u_i,q,h,k,\psi)$. \est{K \eqdef \sum_{\substack{k_i \mid k \\ k \mid k_1k_2k_3 \\ (k_1k_2k_3, qh/k)=1}} \frac{\psi(k_1k_2k_3)\mu(k_1)\mu(k_2)\mu(k_3)\phi(k_1)\phi(k_2)\phi(k_3)}{k_1^{s+v+u_2+u_3} k_2^{s+v+u_1+u_3}k_3^{s+v+u_1+u_2}}  \prod_{p \mid k_1} \left(1-\psi(p) p^{-(s+v+u_2+u_3)}\right)^{-1} \\  \times \prod_{p \mid k_2} \left(1-\psi(p) p^{-(s+v+u_1+u_3)}\right)^{-1}\prod_{p \mid k_3} \left(1-\psi(p) p^{-(s+v+u_1+u_2)}\right)^{-1}.} 

\begin{lemma}
For $k$ square-free the function $K$ is holomorphic when $\real(s+v+u_i+u_j)>0$ for all $i\neq j$.  Moreover it satisfies the bound \est{K \ll \left(k/(k,h)\right)^{3-3\real(s+v)-2\real(u_1+u_2+u_3)}.} 
\end{lemma}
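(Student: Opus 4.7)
The plan is to exploit the squarefreeness of $k$ to factor $K$ as an Euler product over primes $p \mid k$, compute each local factor explicitly, and estimate it. Since $hk \mid q$ with $q$ squarefree, we have $(h,k)=1$, so the coprimality condition $(k_1k_2k_3,qh/k)=1$ is automatic once $k_i\mid k$, and $k/(k,h)=k$. The remaining divisibility conditions $k_i \mid k$ and $k \mid k_1k_2k_3$ factor over primes of $k$: for each $p\mid k$ one simply chooses a nonempty subset $S_p \subseteq \{1,2,3\}$ indicating which of the $k_i$ are divisible by $p$. Since $\psi$, $\mu$, $\phi$, and the Euler factors are all multiplicative, this gives $K = \prod_{p\mid k} K_p$.

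Setting $w_i = s+v+u_j+u_\ell$ for $\{i,j,\ell\}=\{1,2,3\}$ and $a_i(p) = -\psi(p)(p-1)p^{-w_i}/(1-\psi(p)p^{-w_i})$, the local factor becomes
\[
K_p = \sum_{\emptyset \neq S \subseteq \{1,2,3\}} \prod_{i \in S} a_i(p) = \prod_{i=1}^3 (1+a_i(p)) - 1.
\]
The elementary manipulation $1+a_i(p) = (1-\psi(p)p^{1-w_i})/(1-\psi(p)p^{-w_i})$ then yields the closed form
\[
K_p = \prod_{i=1}^3 \frac{1-\psi(p)p^{1-w_i}}{1-\psi(p)p^{-w_i}} - 1.
\]
Holomorphy in the region $\real(w_i) > 0$ is immediate from this form, since $|\psi(p)p^{-w_i}| \leq p^{-\real(w_i)} < 1$ makes each denominator nonzero.

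For the bound, I would clear denominators to write the numerator as $\prod_i (1-\psi(p)p^{1-w_i}) - \prod_i (1-\psi(p)p^{-w_i})$, whose constant term cancels and whose highest-degree monomial in $p$ is $-\psi(p)^3 p^{3-w_1-w_2-w_3}$. Combined with the uniform lower bound on the denominator (coming from $\real(w_i)$ bounded away from zero), this produces the per-prime estimate $|K_p| \ll p^{3-\real(w_1+w_2+w_3)}$. Multiplying over $p \mid k$ and using $(h,k)=1$ to identify $k/(k,h) = k$ gives the claimed bound $|K| \ll k^{3-3\real(s+v)-2\real(u_1+u_2+u_3)} = (k/(k,h))^{3-3\real(s+v)-2\real(u_1+u_2+u_3)}$.

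The main obstacle is making the per-prime estimate rigorous and uniform: the numerator of $K_p$ also contains lower-degree terms in $p$ of orders $p^{2-\real(w_i+w_j)}$ and $p^{1-\real(w_i)}$, whose contributions must be shown to be absorbed into the leading bound. Depending on the ranges of $\real(w_i)$ relevant for the later contour shifts, this likely requires a short case analysis (e.g.\ splitting according to whether $\real(w_i) \leq 1$ or $>1$) or the application of a uniform inequality such as $|1-\psi(p)p^{1-w_i}| \leq 1 + p^{1-\real(w_i)}$ paired with elementary bookkeeping; one can follow the parallel Euler product estimates carried out in Section~14 of \cite{ConreyIwaniec}.
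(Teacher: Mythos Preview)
Your closed-form computation of the local factor $K_p$ is correct, and holomorphy follows exactly as you say. However, the paper takes a shorter route that neatly sidesteps the ``obstacle'' you identify at the end. Rather than computing $K_p$ exactly and then wrestling with the lower-order terms in the numerator, the paper simply takes absolute values \emph{before} summing and then drops the constraint $k\mid k_1k_2k_3$ (equivalently, allows the empty subset $S=\emptyset$). Since all terms are nonnegative after taking absolute values, this only enlarges the sum, and one obtains directly
\[
|K|\;\le\;\prod_{p\mid k}\;\prod_{i=1}^3\Bigl(1+\Bigl|\tfrac{p-1}{p^{w_i}-\psi(p)}\Bigr|\Bigr),
\]
with no ``$-1$'' and hence no cancellation to analyse. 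Each factor is at most $1+(p-1)/(p^{\sigma_i}-1)\ll p^{1-\sigma_i}$ for $\sigma_i=\real(w_i)$ bounded away from $0$ and at most $1$, which is exactly the regime used after the contour shift to $\real(s)=1/2$, $\real(u_i)=1/\log q$. Multiplying over $p\mid k$ gives the stated bound immediately.

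So your approach is valid and in fact yields more (an explicit formula for $K$), but the paper's positivity trick of relaxing the divisibility condition is what makes the estimate a one-liner and dissolves the bookkeeping you flagged.
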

\begin{proof} We have \est{|K| \leq & \sum_{\substack{k_i \mid k \\ k \mid k_1k_2k_3 \\ (k_1k_2k_3, qh/k)=1}}  \prod_{p \mid k_1} \left| \frac{p-1}{p^{s+v+u_2+u_3}-\psi(p)}\right|\prod_{p \mid k_2} \left| \frac{p-1}{p^{s+v+u_1+u_3}-\psi(p)}\right|\prod_{p \mid k_3} \left| \frac{p-1}{p^{s+v+u_1+u_2}-\psi(p)}\right|} all of whose terms are positive.  Therefore we may drop the condition $k\mid k_1k_2k_3$ on the sum to get that $|K|$ is  \est{ \leq \prod_{\substack{p \mid k \\ p \nmid \frac{qh}{k}} } \left(1+\left| \frac{p-1}{p^{s+v+u_2+u_3}-\psi(p)}\right|\right) \left(1+\left| \frac{p-1}{p^{s+v+u_1+u_3}-\psi(p)}\right|\right) \left(1+\left| \frac{p-1}{p^{s+v+u_1+u_2}-\psi(p)}\right|\right)} hence the claim. \end{proof}

The result of all of these manipulations is that \es{\label{Dformula2} D =  \zeta_q(1+u_2+u_3)^{-1}\frac{\mu(k)^3}{\phi(\ell_2)} \sum_{\psi \pmod {\ell_2}} \tau(\overline{\psi})g(\chi,\psi)\psi(\overline{\ell_1h^2k}) K \\ \times L_{q/h}(1-(s+v),\overline{\psi}) L_q(s+v+u_2+u_3,\psi)L_q(s+v+u_1+u_3,\psi)L_q(s+v+u_1+u_2,\psi).}

For a primitive Dirichlet character $\psi$ of conductor $\ell$ we write the asymmetric functional equation as \est{ L(w,\psi) = \eps(\psi) X(w) L(1-w,\overline{\psi})} where \est{X_\ell(\tfrac12 + u) \eqdef \left(\frac{\ell}{\pi} \right)^{-u } \frac{\Gamma\left(\frac{\half - u + \mathfrak{a}}{2} \right)}{\Gamma\left(\frac{\half + u + \mathfrak{a}}{2} \right)}\,\,\,\text{ and }\,\,\,\mathfrak{a} \eqdef \begin{cases} 0 & \psi \text{ even} \\ 1 & \psi \text{ odd.}\end{cases} }  

In the case that $\psi$ is imprimitive we let $\psi^*$ be the primitive character of conductor $\ell_2^*$ which induces $\psi$ of modulus $\ell_2$.  
We collect the miscellaneous Euler factors as $P=P(s+v,u_i,q,h,\ell_2,\psi)$, i.e. \est{P \eqdef & \prod_{\substack{p \mid \ell_2 \\ p \nmid \ell_2^*}} \left(1-\overline{\psi^*}(p)p^{-(1-(s+v))}\right) \prod_{\substack{p \mid q/h \\ p \nmid \ell_2}} \left( 1-\overline{\psi}(p)p^{-(1-(s+v))}\right) \\ & \times \prod_{ \substack{p \mid q \\ p \nmid \ell_2}} \left(1-\psi(p) p^{-(s+v+u_2+u_3)}\right) \left(1-\psi(p) p^{-(s+v+u_1+u_3)}\right) \left(1-\psi(p) p^{-(s+v+u_1+u_2)}\right).}  The function $P$ is entire in all of its complex variables, and if $1-(s+v)\geq \eps >0$ and $s+v+u_i+u_j\geq \eps >0$ for some small fixed $\eps$ for all $i\neq j$ then $P\ll 2^{\nu(q/\ell_2^*)}.$  Then we have that \es{\label{4L1} L_{q/h}(1-s,\overline{\psi}) L_q(s+u_2+u_3,\psi) L_q(s+u_1+u_3,\psi) L_q(s+u_1+u_2,\psi) \\ =  \overline{\eps(\psi^*)}X_{\ell_2^*}(s)^{-1} P \,L(s,\psi^*)L(s+u_2+u_3,\psi)L(s+u_1+u_3,\psi)L(s+u_1+u_2,\psi).}  
We now shift the contours in \eqref{big1prime} to $\real(v)=-3/4$, $\real(s) = 5/4$ and $\real(u_i) = 1/\log q,$ encountering simple polar divisors only when $\psi$ in \eqref{Dformula2} is the trivial character at $s+v=0$ and $s+v+u_i+u_j=1$ for each $i \neq j$.  Because $\tau(\overline{\psi}) g(\chi,\psi)$ is of absolute value 1 when $\psi$ is the trivial character, the terms produced by these divisors are negligibly small.  We replace $L(s+v,\psi^*)$ in \eqref{4L1} by $L(s+v,\psi)$ and absorb the resulting finitely many Euler factors into $P$, leaving the estimate $ P\ll 2^{\nu(q/\ell_2^*)}$ unaffected. 

We make a change of variables $s+v\rightarrow s$, and define \es{\label{U2starF} U_2^*(s,u_1,u_2,u_3) \eqdef \frac{1}{2\pi i } \int_{(-3/4)} \frac{\Gamma(v)}{(-2\pi i )^v} U_2(s-v,u_1,u_2,u_3)\,dv.} The function $U^*_2$ is holomorphic in the region \est{\max(\real(u_i))-1 < \real(s+u_1+u_2+u_3) < \kappa/2 \\ -\kappa/2 < \real(u_i),} symmetric in the $u_i$ variables, and by shifting the contour satisfies the bounds \est{U^*_2 \ll_\eps (1+|s|)^{\ell} \exp(-(\pi/2-\eps)|\imag(u_1+u_2+u_3)|)} where \est{\ell = \max\left(\real(s+u_1+u_2+u_3) -(\kappa+1)/2,-3/2\right).} As $\real(s)=1/2$ and $\kappa\geq 4$ we may take $\ell = -3/2$. Pulling together \eqref{big1prime}, \eqref{Dformula2}, \eqref{4L1}, and \eqref{U2starF} we get that \eqref{79mod} is equal to \es{\label{big4} \frac{1}{q^2} \sum_{hk \mid q} \chi_{h}(-1)\frac{\mu(k)^3}{\phi(k)}  \sum_{\ell_1\ell_2= \ell} \mu(\ell_1)\chi_{\ell_1}(-1) \frac{1}{(2\pi i )^4} \int_{(1/2)}\iiint_{(1/\log q)}   \frac{U_2^*(s,u_1,u_2,u_3)q^{s+u_1+u_2+u_3}h^{s}}{(u_1-\alpha_1)(u_2-\alpha_2)(u_3-\alpha_3)} \\ \times\frac{1}{\phi(\ell_2)} \sum_{\psi \pmod {\ell_2}} \tau(\overline{\psi})g(\chi,\psi)\psi(\overline{\ell_1h^2k}) \overline{\eps(\psi^*)} K P  X_{\ell_2^*}(s)^{-1}  L(s,\psi)L(s+u_2+u_3,\psi) \\ \times L(s+u_1+u_3,\psi)L(s+u_1+u_2,\psi) \,du_1\,du_2\,du_3\,ds  + O(q^{-1/2}).}  There are finitely many similar cases: $m_i$ with different signs, the terms $T_1$ from Proposition \ref{Oscillatory} and the term 1 from \eqref{archarformula}, each of which are evaluated in the same way and can be expressed as a formula \eqref{big4} but with a different choice of holomorphic function $U$ in lieu of $U_2^*$.  We add together these cases and specialize \eqref{big4} to the case $q$ prime to obtain Theorem \ref{Thm2}.

Now we finish the proof of Theorem \ref{MP}.  Taking absolute values inside, \eqref{big4} is bounded by \es{\label{bound} \ll \frac{1}{q^{3/2}} \sum_{hk\mid q} (hk)^{1/2}2^{\nu(hk)} \sum_{\ell_1\ell_2=\ell} \ell_2^{3/2} C^{\nu(\ell_2)}\int_{(1/2)} \frac{1}{(1+|s|)^{3/2}} \\ \times \frac{1}{\phi(\ell_2)}\sum_{\psi \pmod {\ell_2}}   \left| L(s,\psi)L(s+\alpha_2+\alpha_3,\psi)L(s+\alpha_1+\alpha_3,\psi)L(s+\alpha_1+\alpha_2,\psi) \right| \,ds.}

We can apply Cauchy-Schwarz and the large sieve estimate of Lemma \ref{sieve} in three different ways to \eqref{bound}.  The bounds produced by the large sieve in each of these cases depend only on the sizes of \est{\begin{cases} |\alpha_2-\alpha_3|, |\alpha_2+\alpha_3| & \text{ if Cauchy-Schwarz applied as } (12)(34) \\ |\alpha_1-\alpha_2|, |\alpha_1+\alpha_2| & \text{ if Cauchy-Schwarz applied as } (14)(23) \\  |\alpha_3-\alpha_1|, |\alpha_3+\alpha_1| & \text{ if Cauchy-Schwarz applied as } (13)(24), \end{cases} } and do not depend on $s$. Note the symmetry $\alpha_i\rightarrow -\alpha_i$ here, a shadow of the functional equation symmetry in the original problem.  In fact the bounds we obtain depend only on the $|\alpha_i|$ and not on their signs.

Suppose first that all of the $|\alpha_i|$ are within a neighborhood of radius $1/3 \log \ell_2$ of each other.  Then the $\phi(\ell_2)^{-1} \sum_{\psi}$ in \eqref{bound} is bounded by \est{ \ll (\log \ell_2)^2 \left(\log(\nu(\ell_2)+1)\right)^2 \times \begin{cases} (\log \ell_2)^2 & \text{ if all } | \alpha_i| \leq 1/\log \ell_2 \\ (\log \ell_2) |\zeta(1+2i |\alpha_i|)| & \text{ if some } |\alpha_i|>1/\log \ell_2 .\end{cases}}  If not there is a pair of $\alpha_i,\alpha_j$, $i \neq j$ with \es{\label{lastCS}||\alpha_i|-|\alpha_j||>1/\log \ell_2} and we apply Cauchy-Schwarz in the manner which detects the $\alpha_i,\alpha_j$ for which \eqref{lastCS} holds to find that the $\phi(\ell_2)^{-1} \sum_{\psi}$ of \eqref{bound} is \est{\ll (\log \ell_2)^2 \left( \log(\omega(\ell_2)+1)\right)^2 \left|\zeta(1+i|\alpha_i|+i |\alpha_j|)\zeta(1+i|\alpha_i|-i |\alpha_j|)\right|.}  Applying these in \eqref{bound}, writing the sum of multiplicative functions as a product, and applying Merten's theorem one derives Theorem \ref{MP} for $\kappa \geq 4$.

\section{The Case of Weight Two}\label{two}
When $\kappa =2 $ the function $J\left(2\sqrt{nn_2n_3}/c\right)V_{1/2+\alpha}(n/q)$ is not continuous at $n=0$, nor is it of bounded variation when $\alpha \neq 0$, so the use of Poisson summation in section \ref{Initial} needs to be justified.  Let $Q(x)= J(2\sqrt{x})V_{1/2+\alpha}(x)$ and consider the toy case \est{\sum_{n\geq 1} \chi(n) S(n,1,c)Q(n)  = \sum_{a \pmod c} \chi(a) S(a,1,c) \sum_{n=0}^\infty Q(a+nc).}  The interior sum on the right hand side is periodic modulo $c$ so it has a Fourier series \est{\frac{1}{c} \sum_{m\in \Z} \widehat{Q}(m/c) e\left(\frac{ma}{c}\right).} It suffices to show that the Fourier series converges to the correct value.  Suppose first that it converges to some finite value for any $c\in \N$ and $a=0,\ldots,c-1$.  If $a=0$ both sides vanish because $\chi(0)=0$ and there is nothing to prove.  If $a\neq 0$ then $\sum Q(a+nc)$ is continuous at $a$, so the series is Ces\`aro summable to the correct value by Fej\'er's Theorem, hence pointwise summable to the correct value.  To see that the Fourier series converges to some finite value, we may calculate $\widehat{Q}$ explicitly using \eqref{MellinJ}, \eqref{Vdef}, \eqref{Ve}, and use Euler-Maclaurin summation.

Next note that the sums over $m_i$ do not converge absolutely in section \ref{Dual} when $\kappa=2$ (but the sum over $r$ does).  Absolute convergence was used in section \ref{Dual} to swap the sums over $m_i$ with the integrals from the Mellin inverses in Proposition \ref{Oscillatory}, so we need to instead justify the interchange by hand.

Let $\kappa=2$ and return to the sum defined by equation \eqref{total}.  Consider the lengthy expression formed by applying Lemma \ref{Gcalc} and formula \eqref{Hdecomp} via the decomposition \eqref{hkdecomp} to \eqref{total}.  However we now keep the functions $e_c(m_1m_2m_3)$ and $T_2(m_1,m_2,m_3,c)$ inside the sums over $m_i$.  The problematic interior sums over $m_i=k_in_i$ are \est{\sum_{\substack{(n_1,k/k_1)=1 \\ (n_1,qr'h/k)=1}} R_k(k_1n_1) \psi(n_1) \sum_{\substack{(n_2,k/k_2)=1 \\ (n_2,qh/k)=1}} R_k(k_2n_2)\psi(n_2) \sum_{\substack{(n_3,k/k_3)=1 \\ (n_3,qh/k)=1}}  R_k(k_3n_3)\psi(n_3) \\ \times e_c(k_1k_2k_3n_1n_2n_3)T_2(k_1n_1,k_2n_2,k_3n_3,c). }
We use the function $w_0(x)$ from section \ref{sec:statphase} which, recall, is smooth, identically 1 for $x<1/2$, and identically 0 for $x \geq 1$ to split the three sums over $n_i$ at a very large parameter $Z$.  Summing by parts in each of the three variables $n_i$, the tails of the series are equal to \es{\label{sbparts}- \iiint_{\R^3_{>0}} \Sigma(k,c,u_i) \frac{\partial^{3}}{\partial u_1 \partial u_2 \partial u_3} \left(T_2(k_1u_1,k_2u_2,k_3u_3,c)(1-w_0(u_1/Z)w_0(u_2/Z)w_0(u_3/Z))\right)\,du,} where \est{\Sigma(k,c,u_i) \eqdef \sum_{n_i \leq u_i} \prod_{i=1}^3 R_k(k_in_i)\psi(n_i)e_c(k_1k_2k_3n_1n_2n_3).}
From its definition as a Mellin inversion the function $T_2(k_1u_1,k_2u_2,k_3u_3,c)$ is smooth and satisfies \est{u_1^{j_1}u_2^{j_2}u_3^{j_3}\frac{\partial^{j_1+j_2+j_3}}{\partial u_1^{j_1} \partial u_2^{j_2} \partial u_3^{j_3} }T_2 \ll_{h,k,q,\eps} \min_{\lambda \in [\eps,1-\eps]} (u_1u_2u_3)^{\lambda-1+ \eps/10}r'^{-3\lambda-\eps/10}.}

\begin{lemma}\label{Sigmabound}We have the estimate \est{\Sigma(k,c,u_i) \ll_\eps \max \left((u_1u_2u_3)^\eps,r^{-3/4}(u_1u_2u_3)^{3/4+\eps}\right).}
\end{lemma}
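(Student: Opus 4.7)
The plan is to prove the bound by splitting into two regimes according to the relative sizes of $u_1u_2u_3$ and $r$, using Ramanujan-sum and character orthogonality in the ``small'' regime and Poisson summation plus Weil-type bounds in the ``large'' regime.

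First, in the small regime $u_1u_2u_3 \leq r$, the phase $e_c(k_1k_2k_3 n_1n_2n_3)$ stays within a bounded window for all $(n_1, n_2, n_3)$ in the range of summation, so it can be treated as slowly varying. I would establish the uniform partial-sum estimate
$$ \mathcal{P}_i(x) := \sum_{n_i \leq x} R_k(k_i n_i)\psi(n_i) \;\ll_\eps\; q^\eps, $$
by observing that the Dirichlet series of $\mu(k)R_k(k_i \cdot)\psi(\cdot)$ is essentially $L(s,\psi)$ times finitely many explicit Euler factors (cf. Lemma \ref{DirichletSeries}), and then invoking Polya--Vinogradov type estimates. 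A triple application of Abel summation in the three $n_i$ variables, combined with the bounded variation of the phase in this regime, then yields $\Sigma \ll (u_1u_2u_3)^\eps$, which absorbs the first term in the maximum.

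Second, in the large regime $u_1u_2u_3 > r$, I would apply Poisson summation in each $n_i$-sum. The dual sum is indexed by $(h_1, h_2, h_3) \in \Z^3$ with effective support $|h_i| \ll c/u_i$ up to rapidly decaying tails, and the summand is a complete trilinear exponential sum modulo $c = qr$. By the Chinese Remainder Theorem, this complete sum factors: the $q$-part involves $R_k$, $\psi$, and $e_q$, and is bounded uniformly in its parameters by Weil's theorem, analogously to the estimate on $g(\chi,\psi)$ recorded in Lemma \ref{Deligne}; the $r$-part is a pure additive trilinear character sum of the form $\sum_{a_i \pmod r} e_r(\alpha a_1 a_2 a_3 + h_1 a_1 + h_2 a_2 + h_3 a_3)$, which by Deligne's bound for multilinear exponential sums satisfies $\ll r^{3/2+\eps}$ on generic frequencies. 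Assembling the contributions and applying Cauchy--Schwarz to balance the dual-sum length $\ll c^3/(u_1u_2u_3)$ against the pointwise Weil bound produces the claimed estimate $r^{-3/4}(u_1u_2u_3)^{3/4+\eps}$.

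The main obstacle will be the uniform control of the complete trilinear sum modulo $r$ along degenerate frequency strata (when some $h_i$ vanish or the tuple lies in a coordinate plane), since on these loci the full square-root cancellation may fail. A careful stratification in the spirit of Conrey--Iwaniec's analysis of the sum $H(w;q)$ is needed to ensure these degenerate contributions still fit within the stated bound, and to produce the balanced exponent $3/4$ rather than a weaker $1/2$ one would get from a single application of Cauchy--Schwarz. A secondary bookkeeping issue is the factorization via CRT when $h=\gcd(q,r) > 1$, since the clean product decomposition of $c$ only holds on the coprime part.
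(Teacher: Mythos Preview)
Your approach is vastly more elaborate than what is needed, and the large-regime argument has a genuine gap. The paper's proof is a five-line variable-separation trick: write
\[
e\!\left(\frac{k_1k_2k_3 n_1n_2n_3}{c}\right) = 1 + \frac{1}{2\pi i}\int_{(-3/4)} \frac{\Gamma(v)}{(-2\pi i)^v}\left(\frac{k_1k_2k_3 n_1n_2n_3}{c}\right)^{-v}\,dv
\]
(this is \eqref{archarformula}). The point is that $(n_1n_2n_3)^{-v}$ factors, so after this decomposition $\Sigma$ splits as a product of three independent one-variable partial sums $\sum_{n_i\le u_i} R_k(k_in_i)\psi(n_i)n_i^{-v}$. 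By Lemma~\ref{DirichletSeries} the associated Dirichlet series is $L(s,\psi)$ times finitely many Euler factors, so Perron plus a contour shift gives $\ll_{q,h,k,\eps} u_i^{\max(0,-\real(v))+\eps}$ for each factor. The term ``$1$'' then yields the bound $(u_1u_2u_3)^\eps$, and the integral on $\real(v)=-3/4$ contributes $c^{-3/4}(u_1u_2u_3)^{3/4+\eps} \ll_q r^{-3/4}(u_1u_2u_3)^{3/4+\eps}$. No Poisson summation, no Weil or Deligne bounds, no stratification.

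Regarding your proposed route: your small-regime argument via Abel summation and bounded phase variation is fine (and morally the same as the ``$1$'' term above), but your large-regime argument does not produce the stated bound. After Poisson in all three variables you have roughly $c^3/(u_1u_2u_3)$ dual frequencies, each carrying a complete sum whose $r$-part you bound by $r^{3/2+\eps}$; the straightforward assembly then gives $\sim r^{3/2}$, not $r^{-3/4}(u_1u_2u_3)^{3/4}$. Your appeal to ``Cauchy--Schwarz to balance'' is unspecified and I do not see any way for it to convert $r^{3/2}$ into the required $r^{-3/4}(u_1u_2u_3)^{3/4}$, since there is no parameter left to interpolate against. You already flag the degenerate-frequency and CRT issues as obstacles; these are real, but the deeper problem is that even on the generic stratum the counting does not close. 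The Mellin separation of $e(x)$ sidesteps all of this entirely.
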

\begin{proof}
We use formula \eqref{archarformula} from section \ref{Dual}: \est{e\left(\frac{m_1m_2m_3}{c}\right) = 1+ \frac{1}{2\pi i}\int_{(-3/4)} \frac{\Gamma(v)}{(-2\pi i )^v} \left(\frac{m_1m_2m_3}{c}\right)^{-v}\,dv.}  The integral converges absolutely.  We have then that $\Sigma(k,c,u_i)$ is given by \est{\sum_{n_i\leq u_i} R_k(k_in_i)\psi(n_i) + \frac{1}{2\pi i } \int_{(-3/4)} \frac{\Gamma(v)}{(-2\pi i )^v} c^{v} \sum_{n_i\leq u_i} R_k(k_in_i)\psi(n_i)n_i^{-v}.}  Lemma \ref{DirichletSeries} along with standard Perron formula and contour shifting techniques show that the first of these two terms is $\ll_{q,h,k,\eps} (u_1u_2u_3)^\eps,$ and the second is $\ll_{q,h,k,\eps} r^{-3/4}(u_1u_2u_3)^{3/4+\eps}$.  
\end{proof}
Evaluating the derivatives in \eqref{sbparts} produces eight terms, the most difficult of which is\es{\label{Tbound}k_1k_2k_3T_2^{(1,1,1)}(k_1u_1,k_2u_2,k_3u_3,c)(1-w_0(u_1/Z)w_0(u_2/Z)w_0(u_3/Z)) \\  \ll_{h,k,q,\eps} \min\left( (u_1u_2u_3)^{-3/2+\eps}r'^{-3/2}, (u_1u_2u_3)^{-11/6+\eps}r'^{-1/2}\right),} using $\lambda=1/2$ or $\lambda=1/6$.   
We may rigorously swap the main portion of the sums over $n_i$ with the Mellin inversion integrals from Proposition \ref{Oscillatory}.  The tails of the sums over $n_i$ are given by the integral \eqref{sbparts}.  This integral converges absolutely by Lemma \ref{Sigmabound} and \eqref{Tbound}, and moreover is $\ll_{q,h,k,\eps} r'^{-5/4}Z^{-1/12+\eps} $. Now taking the limit $Z\rightarrow \infty$ one shows by a Mellin transform argument that the smoothed partial main portion sums converge to the correct $L$-functions from Lemma \ref{DirichletSeries}, and the tails go to 0.  Thus we rigorously establish \eqref{big4} and Theorem \ref{Thm2} in the case $\kappa=2$. 

Finally, to derive Theorem \ref{MP} from Theorem \ref{Thm2} we shift the contour to $\real(s)=1/2-1/\log q$ and $\real(u_i) =1/4 \log q$.  Note that if we chose $\real(s)=1/2$ as we did in the cases $\kappa \geq 4$ then the integral \eqref{bound} would not converge.  With the choice $\real(s)=1/2-1/\log q$ we calculate \est{\int_{-\infty}^\infty \frac{dt}{(1+|t|)^{1+1/4\log q}} \ll \log q,} so we gain a single extra $\log q$ in the result when $\kappa =2$.  \end{proof}


\begin{thebibliography}{10}

\bibitem{BlomerHarcosBurgessSC}
Valentin Blomer and Gergely Harcos.
\newblock Hybrid bounds for twisted {$L$}-functions.
\newblock {\em J. Reine Angew. Math.}, 621:53--79, 2008.

\bibitem{BlomerHarcosBurgessSCad}
Valentin Blomer and Gergely Harcos.
\newblock Addendum: {H}ybrid bounds for twisted {$L$}-functions.
\newblock {\em J. Reine Angew. Math.}, 694:241--244, 2014.

\bibitem{BKY}
Valentin Blomer, Rizwanur Khan, and Matthew Young.
\newblock Distribution of mass of holomorphic cusp forms.
\newblock {\em Duke Math. J.}, 162(14):2609--2644, 2013.

\bibitem{CarlettiMontiPerelli}
E.~Carletti, G.~Monti~Bragadin, and A.~Perelli.
\newblock On general {$L$}-functions.
\newblock {\em Acta Arith.}, 66(2):147--179, 1994.

\bibitem{CFKRS}
Brian {Conrey}, David Farmer, Jon Keating, Michael Rubinstein, and Nina Snaith.
\newblock Integral moments of {$L$}-functions.
\newblock {\em Proc. London Math. Soc. (3)}, 91(3):33--104, 2005.

\bibitem{ConreyIwaniec}
Brian {Conrey} and Henryk Iwaniec.
\newblock The cubic moment of central values of automorphic {$L$}-functions.
\newblock {\em Ann. of Math. (2)}, 151(3):1175--1216, 2000.

\bibitem{DeWC}
Pierre {Deligne}.
\newblock La conjecture de {W}eil. {I}.
\newblock {\em Inst. Hautes \'{E}tudes Sci. Publ. Math.}, 43(1):273--307, 1974.

\bibitem{HLAppendix}
Dorian {Goldfeld}, Jeffery Hoffstein, and Daniel Lieman.
\newblock Appendix: An effective zero-free region.
\newblock {\em Ann. of Math. (2)}, 140(2):177--181, 1994.

\bibitem{GR7}
I.S. {Gradshteyn} and I.M. Ryzhik.
\newblock {\em Table of Integrals, Series and Products, 7th edition}.
\newblock Academic Press, 2007.

\bibitem{ClassIw}
Henryk {Iwaniec}.
\newblock {\em Topics in Classical Automorphic Forms}.
\newblock American Mathematical Society Graduate Studies, 1997.

\bibitem{IK}
Henryk {Iwaniec} and Emmanuel Kowalski.
\newblock {\em Analytic Number Theory}.
\newblock American Mathematical Society Colloquium Publications, 2004.

\bibitem{KohnenGeneralLevel}
Winfried Kohnen.
\newblock Fourier coefficients of modular forms of half-integral weight.
\newblock {\em Math. Ann.}, 271(2):237--268, 1985.

\bibitem{KZ}
Winfried {Kohnen} and Don Zagier.
\newblock Values of {$L$}-series of modular forms at the center of the critical
  strip.
\newblock {\em Invent. Math.}, 64:175--198, 1981.

\bibitem{MichelVenkateshGL2}
Philippe {Michel} and Akshay Venkatesh.
\newblock The subconvexity problem for {$GL_2$}.
\newblock {\em Publ. Math. Inst. Hautes \'{E}tudes Sci.}, 111:171--271, 2010.

\bibitem{Motohashi}
Yoichi Motohashi.
\newblock {\em Spectral theory of the {R}iemann zeta-function}, volume 127 of
  {\em Cambridge Tracts in Mathematics}.
\newblock Cambridge University Press, Cambridge, 1997.

\bibitem{SteinHarmonicAnalysis}
Elias~M. Stein.
\newblock {\em Harmonic analysis: real-variable methods, orthogonality, and
  oscillatory integrals}, volume~43 of {\em Princeton Mathematical Series}.
\newblock Princeton University Press, Princeton, NJ, 1993.
\newblock With the assistance of Timothy S. Murphy, Monographs in Harmonic
  Analysis, III.

\bibitem{Waldspurger}
J.-L. Waldspurger.
\newblock Sur les valeurs de certaines fonctions {$L$} automorphes en leur
  centre de sym\'etrie.
\newblock {\em Compositio Math.}, 54(2):173--242, 1985.

\bibitem{YoungCubic}
Matthew~P. {Young}.
\newblock Weyl-type hybrid subconvexity bounds for twisted {$L$}-functions and
  {H}eegner points on shrinking sets.
\newblock {\em arXiv}, 1405.5457, 2014.

\end{thebibliography}
\end{document}